 \newtheorem{Theorem}{Theorem}[section]
 \newtheorem{Corollary}{Corollary}[section]
 \newtheorem{Lemma}{Lemma}[section]
 \newtheorem{Proposition}{Proposition}[section]
 \newtheorem{Problem}{Problem}[section]
 \newtheorem{Remark}{Remark}[section]
 \numberwithin{equation}{section}
\begin{document}

\title[Multiplier ideal sheaves, jumping numbers, restriction formula]
 {Multiplier ideal sheaves, jumping numbers, and the restriction formula}

\author{Qi'an Guan}
\address{Qi'an Guan: School of Mathematical Sciences, and Beijing International Center for Mathematical Research,
Peking University, Beijing, 100871, China.}
\email{guanqian@amss.ac.cn}
\author{Xiangyu Zhou}
\address{Xiangyu Zhou: Institute of Mathematics, AMSS, and Hua Loo-Keng Key Laboratory of Mathematics, Chinese Academy of Sciences, Beijing, China}
\email{xyzhou@math.ac.cn}

\thanks{The authors were partially supported by NSFC-11431013. The second author would like to thank NTNU for offering him Onsager Professorship.
The first author was partially supported by NSFC-11522101.}

\subjclass{}

\keywords{multiplier ideal sheaf, plurisubharmonic function, complex singularity exponent, jumping number, $L^2$ extension theorem}

\date{\today}

\dedicatory{}

\commby{}


\begin{abstract}
In the present article,
we establish an equality condition in the restriction formula on jumping numbers
by giving a sharp lower bound of the dimension of the support of a related
coherent sheaf.
As applications,
we obtain equality conditions in
the restriction formula on complex singularity exponents
by giving the dimension, the regularity and the transversality of the support,
and we also obtain some sharp equality conditions in the fundamental subadditivity property on complex singularity exponents.
We also obtain two sharp relations on jumping numbers.
\end{abstract}

\maketitle
\section{Backgrounds and Motivations}

Multiplier ideal sheaves associated to plurisubhamonic functions and their associated invariants (say, complex singularity exponents i.e. log canonical threshold (lct) in algebraic geometry and jumping numbers) have become in recent years a fundamental tool in several complex variables and algebraic geometry, and have been developing with great success by many mathematicians
(see e.g. \cite{Nadel90,siu96,demailly-note2000,Blic-Lazar03,Lazar04I,siu05,siu09}).

Various important and fundamental properties about the multiplier ideal sheaves and the invariants
have been established, such as the first properties: e.g., coherence, integrally closedness, Nadel vanishing theorem; and further properties: e.g., the restriction formula and subadditivity property
(see e.g. \cite{DEL00,D-K01,demailly2010}). Very recently, strong openness property of the multiplier ideal sheaf is established by our solution of Demailly's strong openness conjecture (see e.g. \cite{GZopen-c}).

In the present article, we'll discuss the restriction formulas for multiplier ideal sheaves and on jumping numbers, and related subadditivity property. Based on the strong openness property and some other recent results, we establish sharp equality conditions in the restriction formula and subadditivity property in Demailly-Ein-Lazaseld's paper \cite{DEL00} and Demailly-Koll\'{a}r's paper \cite{D-K01},
by giving sharp lower bounds of the dimensions of the support of the related coherent analytic sheaves.
We also discuss some new properties about the multiplier ideal sheaves.

\subsection{Organization of the paper}
$\\$

In the present section,
we recall the backgrounds and the motivations of the problems
about sharp equality conditions in the restriction formula on jumping numbers and
the fundamental subadditivity property on complex singularity exponents
(Problem \ref{prob:jump_equality},
Problem \ref{prob:A_dim} and Problem \ref{prob:equality_subadd_jump}).

In Section \ref{sec:main_result}, we
present the main results of the present paper:
the solution of Problem \ref{prob:jump_equality}
(Theorem \ref{t:jump_equality}, main theorem),
the solutions of
Problem \ref{prob:A_dim} and Problem \ref{prob:equality_subadd_jump}
(Theorem \ref{thm:jump_equality_dim_sing}, Theorem \ref{thm:lct_slice_graph} and Theorem \ref{thm:equality_subadd_jump}, applications of Theorem \ref{t:jump_equality});
two sharp relations on jumping numbers (Corollary \ref{coro:GZ_jump_sharp} of Theorem \ref{t:GZ_jump_sharp} and Theorem \ref{thm:sharp_jump_inequ}) and the slicing result on complex singularity exponents (Remark \ref{rem:lct_slice}).
In Section \ref{sec:preparetory},
we recall or give some preliminary results used in the proof of the main theorem and applications.
In Section \ref{sec:proof_main},
we prove the main theorem (Theorem \ref{t:jump_equality}).
In Section \ref{sec:proof_applications},
we prove the applications of the main theorem
(Theorem \ref{thm:jump_equality_dim_sing}, Theorem \ref{thm:lct_slice_graph},
Remark \ref{prop:add_dim_nonregular} and Proposition \ref{prop:lct_add_graph}).
In Section \ref{sec:proof_relations},
we prove the two sharp relations on jumping numbers (Corollary \ref{coro:GZ_jump_sharp} of Theorem \ref{t:GZ_jump_sharp} and Theorem \ref{thm:sharp_jump_inequ}).
In section \ref{sec:Berndtsson},
we present a relationship between the fibrewise Bergman kernels and integrability.

\subsection{Restriction formula and subadditivity property}
$\\$

Let $\Omega$ be a domain in $\mathbb{C}^{n}$ with coordinates $(z_{1},\cdots,z_{n})$ and origin $o=(0,\cdots,0)\in\Omega$.
Let $u$ be a plurisubharmonic function on $\Omega$.
Nadel \cite{Nadel90} introduced
the multiplier ideal sheaf $\mathcal{I}(u)$
which can be defined as the sheaf of germs of holomorphic functions $f$ such that
$|f|^{2}e^{-2u}$ is locally integrable.
Here $u$ is regarded as the weight of $\mathcal{I}(u)$.

It is well-known that the multiplier ideal sheaf $\mathcal{I}(u)$ is coherent and integral closed, satisfies Nadel's vanishing theorem \cite{Nadel90} and the restriction formula and subadditivity property,
and the strong openness property $\mathcal{I}(u)=\cup_{\varepsilon>0}\mathcal{I}((1+\varepsilon)u)$ \cite{GZopen-a,GZopen-b,GZopen-c}
i.e. our solution of Demailly's strong openness conjecture
(the background and motivation of the conjecture could be referred to \cite{demailly-note2000,demailly2010}).

Let $I\subseteq \mathcal{O}_{o}$ be a coherent ideal.
The jumping number $c_{o}^{I}(u)$ is defined
(see e.g. \cite{JM12,JM13})
$$c_{o}^{I}(u):=sup\{c\geq0:|I|^{2}\exp{(-2cu)}\,\,\text{is integrable near}\,\,o\},$$
which can be reformulated by $c_{o}^{I}(u):=sup\{c\geq0:\mathcal{I}(cu)_{o}\supseteq I\}.$

Especially, when $I=\mathcal{O}_{o}$, the jumping number is just the complex singularity exponent denoted by $c_{o}(u)$
(see \cite{tian87}, see also \cite{demailly-note2000,demailly2010}) (or log canonical threshold by algebraic geometer see \cite{Sho92,Ko92}).

By Berndtsson's solution (\cite{berndtsson13}) of the openness conjecture $\mathcal{I}(c_{o}(u)u)_{o}\neq \mathcal{O}_{o}$ posed in \cite{D-K01},
it follows that $\{z|c_{z}(u)\leq c_{o}(u)\}=Supp(\mathcal{O}/\mathcal{I}(c_{o}(u)u))$, which is an analytic set since $\mathcal{I}(c_{o}(u)u)$ is a coherent ideal sheaf \cite{Nadel90} and the support of a coherent analytic sheaf is analytic.

\

Let $\mathcal{F}\subseteq\mathcal{O}$ be a coherent ideal sheaf.
By the definition of $c_{z}^{\mathcal{F}_{z}}(u)$ and the strong openness property,
it follows that
$c_{z}^{\mathcal{F}_{z}}(u)>p \Longrightarrow \mathcal{F}_{z}\subseteq\mathcal{I}(pu)_{z}$ and
$c_{z}^{\mathcal{F}_{z}}(u)\leq p \Longrightarrow \mathcal{F}_{z}\not\subseteq\mathcal{I}(pu)_{z}$.

Combining the fact that the support of a coherent analytic sheaf is an analytic subset,
one obtains

\emph{The lowerlevel set of jumping numbers $\{z|c_{z}^{\mathcal{F}_{z}}(u)\leq p\}=Supp(\mathcal{F}/(\mathcal{F}\cap\mathcal{I}(pu)))$
is an analytic subset.}

Let $H=\{z_{k+1}=\cdots=z_{n}=0\}$.
In \cite{DEL00} (see also (14.1) in \cite{demailly2010}),
the following restriction formula for multiplier ideal sheaves
has been stated by rephrasing Ohsawa-Takegoshi $L^{2}$ extension theorem:\\

\textbf{Restriction formula (for multiplier ideal).} $\mathcal{I}(u|_{H})\subseteq \mathcal{I}(u)|_{H}.$\\

Using the strong openness property,
it follows that the above
restriction formula for multiplier ideal is equivalent to\\

\textbf{Restriction formula (on jumping number).}
Let $I$ be a coherent ideal on $\mathcal{O}_{o'}$, where $o'$ is the origin in $H$. Then
$c_{o'}^{I}(u|_{H})\leq\sup\{c_{o}^{\tilde{I}}(u)|\tilde{I}\subseteq\mathcal{O}_{o}\ \& \ \tilde{I}|_{H}=I\},$
where $o'$ emphasizes that $c_{o'}^{I}(u|_{H})$ is computed on the submanifold $H$.
\\

When $I=\mathcal{O}_{o'}$,
the restriction formula about jumping numbers degenerates to the following restriction formula (an "important monotonicity result" as said in \cite{D-K01}) about complex singularity exponents:

\begin{Proposition}
\label{prop:DK2000}\cite{D-K01}$c_{o'}(u|_{H})\leq c_{o}(u)$, where $u|_{H}\not\equiv-\infty$.
\end{Proposition}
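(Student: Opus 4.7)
The plan is to obtain Proposition \ref{prop:DK2000} as a direct consequence of the restriction formula for multiplier ideal sheaves $\mathcal{I}(u|_{H}) \subseteq \mathcal{I}(u)|_{H}$ recorded in the excerpt, which is itself a reformulation of the Ohsawa--Takegoshi $L^{2}$ extension theorem. The hypothesis $u|_{H} \not\equiv -\infty$ is exactly what guarantees that $u|_{H}$ is an honest plurisubharmonic function on a neighborhood of $o'$ in $H$, so that $c_{o'}(u|_{H})$ is well-defined.

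It suffices to show that every real number $c$ with $0 \leq c < c_{o'}(u|_{H})$ satisfies $c \leq c_{o}(u)$; taking the supremum over such $c$ then yields the proposition. Fix such a $c$. By definition of the complex singularity exponent on $H$, $e^{-2c\,u|_{H}}$ is locally integrable near $o'$, which is equivalent to saying that the germ of the constant function $1$ on $H$ lies in $\mathcal{I}(cu|_{H})_{o'}$. The restriction formula applied to the weight $cu$ gives $\mathcal{I}(cu|_{H})_{o'} \subseteq \bigl(\mathcal{I}(cu)|_{H}\bigr)_{o'}$, hence there exists a germ $F \in \mathcal{I}(cu)_{o}$ whose restriction to $H$ is the constant germ $1$ at $o'$. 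Then $F(o) = 1$, so $F$ is a unit in $\mathcal{O}_{o}$; consequently $\mathcal{I}(cu)_{o}$ contains a unit and must equal $\mathcal{O}_{o}$. Equivalently, $e^{-2cu}$ is locally integrable near $o$, which gives $c_{o}(u) \geq c$, as required.

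The only nontrivial ingredient is the restriction formula itself, and in turn Ohsawa--Takegoshi extension: lifting the constant function $1$ from $H$ to a holomorphic function on an $\Omega$-neighborhood of $o$ with controlled $L^{2}$ norm against $e^{-2cu}$. Modulo that input, everything else is pure manipulation of the definitions plus the trivial remark that a holomorphic $F$ with $F(o) = 1$ is bounded away from $0$ near $o$, so integrability of $|F|^{2} e^{-2cu}$ forces integrability of $e^{-2cu}$.
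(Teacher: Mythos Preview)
Your proof is correct and follows essentially the same route the paper indicates: Proposition~\ref{prop:DK2000} is presented there as the special case $I=\mathcal{O}_{o'}$ of the restriction formula, which in turn is the Ohsawa--Takegoshi extension theorem rephrased for multiplier ideals. Your argument spells this out directly---extending the constant $1$ and using that an extension with $F(o)=1$ is a unit---without passing through the jumping-number formulation or invoking strong openness, which is exactly how the original proof in \cite{D-K01} proceeds.
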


In \cite{D-K01} (see also (13.17) in \cite{demailly2010}),
the following fundamental subadditivity property of complex singularity exponents has been presented:
\begin{Theorem}
\label{thm:subadd_cse}\cite{D-K01}
Let $I$ and $J$ be coherent ideals on $\mathcal{O}_{o}$.
Let $u=\log|I|$ and $v=\log|J|$.
$c_{o}(\max\{u,v\})\leq c_{o}(u)+c_{o}(v).$
\end{Theorem}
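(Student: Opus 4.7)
The plan is to deduce the subadditivity from the restriction formula (Proposition \ref{prop:DK2000}) applied to the diagonal of the product space $\Omega\times\Omega$. The first observation is that $\max\{u,v\}=\frac12\log\max\{|I|^2,|J|^2\}$ differs by a bounded function from $\log|I+J|$, where $I+J$ denotes the sum of the two ideals (so that $|I+J|^2=|I|^2+|J|^2$); hence it suffices to prove $c_o(\log|I+J|)\leq c_o(u)+c_o(v)$.

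I would then lift the problem to the product space. On $\Omega\times\Omega$ with coordinates $(z,w)$ let $K\subset\mathcal{O}_{(o,o)}$ be the coherent ideal generated by the generators of $I$ (in the first factor) together with the generators of $J$ (in the second factor); then $|K(z,w)|^2=|I(z)|^2+|J(w)|^2$, so $\log|K|$ is plurisubharmonic on $\Omega\times\Omega$. Under a linear change of variables (e.g.\ $(z,w)\mapsto(z,z-w)$) the diagonal $\Delta=\{z=w\}$ becomes a standard linear subspace, and $\log|K|\big|_\Delta=\log|I+J|$. The restriction formula then yields
\[ c_o(\log|I+J|)=c_o\bigl(\log|K|\big|_\Delta\bigr)\leq c_{(o,o)}(\log|K|), \]
reducing the theorem to the inequality $c_{(o,o)}(\log|K|)\leq c_o(u)+c_o(v)$.

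I would establish this last inequality by computing the CSE of $\log|K|$ directly on the product. The Mellin identity $(|I|^2+|J|^2)^{-c}=\Gamma(c)^{-1}\int_0^\infty t^{c-1}e^{-t|I|^2}e^{-t|J|^2}\,dt$ combined with Fubini writes $\int_{U\times V}(|I|^2+|J|^2)^{-c}\,dz\,dw$ as $\Gamma(c)^{-1}\int_0^\infty t^{c-1}\phi(t)\psi(t)\,dt$, where $\phi(t)=\int_U e^{-t|I|^2}\,dz$ and $\psi(t)=\int_V e^{-t|J|^2}\,dw$ are separated Laplace transforms of the squared norms in the two groups of variables. The Mellin transforms of $\phi$ and $\psi$ reproduce $\Gamma(c)\int|I|^{-2c}$ and $\Gamma(c)\int|J|^{-2c}$, so they have abscissae of convergence exactly $c_o(u)$ and $c_o(v)$; heuristically $\phi(t)\asymp t^{-c_o(u)}$ and $\psi(t)\asymp t^{-c_o(v)}$ (up to logarithmic factors), and the combined integral then converges precisely for $c<c_o(u)+c_o(v)$.

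The hardest step will be making this last asymptotic rigorous — converting information about the abscissa of convergence of $\phi,\psi$ into a pointwise bound at $t\to\infty$. I would handle this either via log-resolutions of $I$ and $J$, whose product log-resolves $K$ on $\Omega\times\Omega$ (so that the local lct contribution at each intersection of resolution divisors splits additively into a $z$-factor and a $w$-factor summing to $c_o(u)+c_o(v)$), or via a sector argument restricting the integrand to $\{|I(z)|\leq|J(w)|\}$ where the order of integration becomes favorable and the volume asymptotic $|\{|I|<r\}|\asymp r^{2c_o(u)}$ forces divergence when $c>c_o(u)+c_o(v)$.
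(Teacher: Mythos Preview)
Your overall architecture --- lift to the product $\Omega\times\Omega$, restrict the product weight $\log|K|$ to the diagonal via Proposition~\ref{prop:DK2000}, and then compute the complex singularity exponent on the product --- is exactly the scheme the paper (following \cite{D-K01}) uses. The paper does not give a separate proof of Theorem~\ref{thm:subadd_cse}; instead it proves the generalisation Theorem~\ref{thm:subadd_cse_general} by combining Proposition~\ref{prop:DK2000} with Proposition~\ref{thm:add_prod_cse}, whose content is precisely the product identity $c_{(o,o)}(\max\{u\circ\pi_1,v\circ\pi_2\})=c_o(u)+c_o(v)$ (for arbitrary plurisubharmonic weights, not just $\log|I|$).

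Where you differ is in the computation of $c_{(o,o)}(\log|K|)$. The paper's argument for Proposition~\ref{thm:add_prod_cse} is a ``sector'' argument in disguise: it rewrites both sides via sublevel-set volumes $\mu(\{u<\log r\})$ and invokes Theorem~\ref{thm:DK_GZ} (the solution of the Demailly--Koll\'ar conjecture) to get the sharp two-sided asymptotic $r^{2c_o(u)}$. For the algebraic case $u=\log|I|$ you do not need that machinery: your log-resolution suggestion (resolve $I$ and $J$ separately, take the product) is the original route in \cite{D-K01} and is cleaner here, and your sector/volume sketch is essentially the same computation as the paper's once one observes that for an ideal the volume estimate is elementary. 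Your Mellin-transform paragraph, by contrast, is a detour: the step you flag as hardest --- upgrading ``abscissa of convergence'' information on $\phi,\psi$ to pointwise decay $\phi(t)\asymp t^{-c_o(u)}$ --- is genuinely delicate (logarithmic corrections appear even in the monomial case), and in the end you would be forced back to one of the other two methods to justify it. I would drop the Mellin discussion and commit to the resolution argument.
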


Let $o_{1}\in\Omega_{1}$ and $o_{2}\in\Omega_{2}$,
and let $\pi_{i}:\Omega_{1}\times\Omega_{2}\to\Omega_{i}$ be projections for $i\in\{1,2\}$.
Motivated by the proof of Theorem \ref{thm:subadd_cse} in \cite{D-K01} (see also (13.17) in \cite{demailly2010})
and using Theorem \ref{thm:DK_GZ} in \cite{GZopen-b,GZopen-effect} (i.e., our solution of a conjecture posed by Demailly and Kollar in \cite{D-K01}),
we obtain

\begin{Proposition}
\label{thm:add_prod_cse}
Let $I_{1}$ and $I_{2}$ be coherent ideals in $\mathcal{O}_{o_{1}}$ and $\mathcal{O}_{o_{2}}$ respectively,
$u$ and $v$ be plurisubharmonic functions near $o_{1}$ and $o_{2}$ respectively, then one has
$$c_{(o_{1},o_{2})}^{I_{1}\times I_{2}}(\max\{u\circ\pi_{1}, v\circ\pi_{2}\})= c_{o_{1}}^{I_{1}}(u)+c_{o_{2}}^{I_{2}}(v).$$
\end{Proposition}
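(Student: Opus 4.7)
The plan is to prove the two inequalities separately. After subtracting constants from $u$ and $v$ (which leaves every jumping number invariant, as one sees directly from the defining integral), I would reduce to $u\leq 0$ on a neighborhood $D_{1}$ of $o_{1}$ and $v\leq 0$ on a neighborhood $D_{2}$ of $o_{2}$; on the product one then has $|I_{1}\times I_{2}|^{2}=|I_{1}\circ\pi_{1}|^{2}\cdot|I_{2}\circ\pi_{2}|^{2}$.

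For the inequality ``$\geq$'', I would start from the elementary pointwise bound $(a+b)\max\{s,t\}\geq as+bt$, valid for $s,t\leq 0$ and $a,b\geq 0$, applied to $s=u\circ\pi_{1}$ and $t=v\circ\pi_{2}$. Exponentiating and multiplying by the coherent weight gives
\[
|I_{1}\times I_{2}|^{2}\,e^{-2(a+b)\max\{u\circ\pi_{1},\,v\circ\pi_{2}\}}\ \leq\ \bigl(|I_{1}|^{2}e^{-2au}\bigr)\circ\pi_{1}\cdot\bigl(|I_{2}|^{2}e^{-2bv}\bigr)\circ\pi_{2}.
\]
Integrating over $D_{1}\times D_{2}$ and invoking Fubini, the right-hand side splits as a product of two one-variable integrals, each finite provided $a<c_{o_{1}}^{I_{1}}(u)$ and $b<c_{o_{2}}^{I_{2}}(v)$. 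Letting $a\nearrow c_{o_{1}}^{I_{1}}(u)$ and $b\nearrow c_{o_{2}}^{I_{2}}(v)$ yields the ``$\geq$'' inequality.

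For the inequality ``$\leq$'', which is the substantive content, I would follow the strategy of Demailly--Koll\'ar's proof of the subadditivity theorem (Theorem~\ref{thm:subadd_cse}) in \cite{D-K01}. Since we are already on a product, I would not introduce a diagonal; instead I plan to combine an Ohsawa--Takegoshi-type extension along a slice (say $\{o_{1}\}\times\Omega_{2}$ or $\Omega_{1}\times\{o_{2}\}$) with Theorem~\ref{thm:DK_GZ} of \cite{GZopen-b,GZopen-effect}, which supplies the ideal-sheaf version of the openness/extension step needed to carry the coherent ideal $I_{1}\times I_{2}$ through the argument. Concretely, assuming $|I_{1}\times I_{2}|^{2}e^{-2c\max\{u\circ\pi_{1},v\circ\pi_{2}\}}$ is locally integrable at $(o_{1},o_{2})$, an $L^{2}$-extension combined with Theorem~\ref{thm:DK_GZ} should extract a decomposition $c=a+b$ with $|I_{1}|^{2}e^{-2au}$ integrable near $o_{1}$ and $|I_{2}|^{2}e^{-2bv}$ integrable near $o_{2}$, giving $c\leq c_{o_{1}}^{I_{1}}(u)+c_{o_{2}}^{I_{2}}(v)$. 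The main obstacle is precisely this ``$\leq$'' direction: no pointwise inequality between $\max\{u\circ\pi_{1},v\circ\pi_{2}\}$ and $u\circ\pi_{1}+v\circ\pi_{2}$ delivers the sharp exponent (after the reduction one has $u\circ\pi_{1}+v\circ\pi_{2}\leq\max$, which only produces a min-type bound in the wrong direction), so the strong openness/extension machinery packaged in Theorem~\ref{thm:DK_GZ} is indispensable, and correctly carrying the ideal $I_{1}\times I_{2}$ through the extension is the key technical point.
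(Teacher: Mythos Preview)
Your ``$\geq$'' argument is correct and cleaner than the paper's treatment of that direction: the pointwise bound $(a+b)\max\{s,t\}\geq as+bt$ (valid for all real $s,t$ and $a,b\geq 0$) together with Fubini does the job with no appeal to Theorem~\ref{thm:DK_GZ}.

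The ``$\leq$'' direction, however, is not a proof. Restricting to a slice such as $\{o_{1}\}\times\Omega_{2}$ is a dead end: there $u\circ\pi_{1}\equiv u(o_{1})$ is constant, so $e^{-2c\max\{u(o_{1}),v\}}$ is bounded (when $u(o_{1})>-\infty$) and the restricted weight carries no information about the singularity of $v$. More seriously, you give no mechanism by which Ohsawa--Takegoshi or Theorem~\ref{thm:DK_GZ} would produce a splitting $c=a+b$ with the separate integrabilities you want; the phrase ``should extract a decomposition'' is exactly where the argument would have to be.

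The paper's route uses Theorem~\ref{thm:DK_GZ} not as an extension device but as one half of a volume-growth characterization of the jumping number. Setting $\mu_{I}(r)=\int_{\{u<\log r\}\cap U}|I|^{2}$, one has the elementary (Chebyshev-type) bound $\liminf_{r\to 0}\frac{\log\mu_{I}(r)}{2\log r}\geq c_{o}^{I}(u)$ on small enough $U$, while Theorem~\ref{thm:DK_GZ} supplies the matching $\limsup\leq c_{o}^{I}(u)$. The crux for the product is the set-theoretic identity
\[
\{\max\{u\circ\pi_{1},v\circ\pi_{2}\}<\log r\}\ =\ \{u<\log r\}\times\{v<\log r\},
\]
so that $\mu_{I_{1}\times I_{2}}(r)=\mu_{I_{1}}(r)\,\mu_{I_{2}}(r)$ and hence $\frac{\log\mu_{I_{1}\times I_{2}}(r)}{2\log r}$ is the \emph{sum} of the two factor expressions. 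Both inequalities then drop out by comparing the $\limsup$ (resp.\ $\liminf$) of a sum with the sum of the individual $\limsup$'s (resp.\ $\liminf$'s). This sublevel-set factorization is the missing idea in your sketch of ``$\leq$''.
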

Details of the proof of Proposition \ref{thm:add_prod_cse} is in subsection \ref{subsec:add_prod}.

Let $I_{1}=\mathcal{O}_{o}$ and $I_{2}=\mathcal{O}_{o}$.
Using Proposition \ref{prop:DK2000},
we generalize Theorem \ref{thm:subadd_cse} as follows

\begin{Theorem}
\label{thm:subadd_cse_general}
Let $u$ and $v$ be plurisubharmonic functions on $\Delta^{n}$.
Then
$$c_{o}(\max\{u,v\})\leq c_{o}(u)+c_{o}(v).$$
\end{Theorem}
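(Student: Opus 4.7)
The plan is to reduce this to the product formula (Proposition \ref{thm:add_prod_cse} with $I_1=I_2=\mathcal{O}_o$) together with the restriction formula (Proposition \ref{prop:DK2000}), by passing through the diagonal of $\Delta^n\times\Delta^n$. This is essentially the same geometric trick used to derive subadditivity from a product identity, but now available at the level of arbitrary plurisubharmonic functions rather than just $\log|I|$-type functions, thanks to Proposition \ref{thm:add_prod_cse}.

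Concretely, first I would assume $u,v\not\equiv-\infty$ near $o$ (otherwise both sides can be handled trivially). On $\Delta^{n}\times\Delta^{n}$ with coordinates $(z,w)$, define
\[
U(z,w):=\max\{u\circ\pi_{1}(z,w),\,v\circ\pi_{2}(z,w)\}=\max\{u(z),v(w)\}.
\]
By Proposition \ref{thm:add_prod_cse} applied with $I_{1}=\mathcal{O}_{o}$ and $I_{2}=\mathcal{O}_{o}$, one has
\[
c_{(o,o)}(U)=c_{o}(u)+c_{o}(v).
\]

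Next, consider the diagonal submanifold $H:=\{(z,w)\in\Delta^{n}\times\Delta^{n}:w=z\}$. Under the affine change of coordinates $(z,w)\mapsto(z,w-z)$, $H$ becomes the coordinate subspace $\{w-z=0\}$, so Proposition \ref{prop:DK2000} is directly applicable. Identifying $H$ with $\Delta^{n}$ via $z\mapsto(z,z)$, the restriction of $U$ to $H$ is
\[
U|_{H}(z)=\max\{u(z),v(z)\},
\]
which is not identically $-\infty$ near $o$ by our assumption. Hence Proposition \ref{prop:DK2000} gives
\[
c_{o}(\max\{u,v\})=c_{o}(U|_{H})\le c_{(o,o)}(U)=c_{o}(u)+c_{o}(v),
\]
which is the desired inequality.

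The only point requiring any care is the justification that the restriction formula, as stated for a coordinate hyperplane $\{z_{k+1}=\cdots=z_{n}=0\}$, applies to the diagonal; this is immediate since jumping numbers and complex singularity exponents are invariant under biholomorphic (in particular linear) changes of coordinates, and the diagonal is linearly equivalent to a coordinate subspace. Thus I do not foresee a serious obstacle: the substance of the generalization is already encoded in the product identity of Proposition \ref{thm:add_prod_cse}, and Theorem \ref{thm:subadd_cse_general} follows by a diagonal-slicing argument of two lines.
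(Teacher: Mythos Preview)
Your proof is correct and follows essentially the same route as the paper: the paper derives Theorem \ref{thm:subadd_cse_general} by taking $I_{1}=I_{2}=\mathcal{O}_{o}$ in Proposition \ref{thm:add_prod_cse} and then applying the restriction formula (Proposition \ref{prop:DK2000}) along the diagonal, exactly as you do. Your write-up simply makes explicit the linear change of coordinates that turns the diagonal into a coordinate subspace.
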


\subsection{Problems about sharp equality conditions}
$\\$

Let $n\geq 2$.
Let $u=\log(\sum_{1\leq j\leq l}|z_{j}|^{2})^{1/2}.$
Note that $l>k\Rightarrow c_{o}(u)=l>k=c_{o}(u|_{H})$,
and $l\leq k\Rightarrow c_{o}(u)=l=c_{o'}(u|_{H})$.
Then it is natural to consider the following problem
about the sharp equality condition in the restriction formula on jumping numbers:

\begin{Problem}
\label{prob:jump_equality}
Let $I$ be a coherent ideal on $\mathcal{O}_{o'}$.
Suppose that
\begin{equation}
\label{equ:jump_equ}
c_{o'}^{I}(u|_{H})=\sup\{c_{o}^{\tilde{I}}(u)|\tilde{I}\subseteq\mathcal{O}_{o}\ \& \ \tilde{I}|_{H}=I\}=:c.
\end{equation}
Let $A=Supp(\mathcal{O}/\mathcal{I}(cu))$.
Can one obtain that
\begin{equation}
\label{equ:jump_equ_condition}
dim_{o}A\geq n-k?
\end{equation}
\end{Problem}

For the case $I=\mathcal{O}_{o'}$ and $(k,n)=(1,2)$, Problem \ref{prob:jump_equality} was solved by
Blel-Mimouni (\cite{B-M}) and Favre-Jonsson (\cite{FM05j}).

For the case $I=\mathcal{O}_{o'}$ and $(k,n)=(1,n)$,
Problem \ref{prob:jump_equality} was solved in \cite{GZopen-lelong}.

Recently, combining with the recent result in \cite{demailly-Pham}
and Proposition \ref{Pro:GZ1005} in \cite{GZopen-lelong},
Rashkovskii \cite{Rash1501} reproved the above result in \cite{GZopen-lelong}.

It is natural to ask whether the more precise condition $\dim_{o}A=n-k+\dim_{o}(A\cap H)$ holds?
However, the following example tells us that the above condition may not hold for general $I$.

\textbf{Example 1.} When $n=4$, $k=3$, $H=\{z_{4}=0\}$, $I=(z_{1})_{o'}$, $u=2\log(|z_{2}|+|z_{3}|)+2\log(|z_{1}|+|z_{4}|)$,
then $c^{I}_{o'}(u|_{H})=1=\sup_{\tilde{I}}\{c_{o}^{\tilde{I}}(u)\}$, $A=(\{z_{2}=z_{3}=0\}\cup\{z_{1}=z_{4}=0\})$,
and $\dim_{o}A=2<3=4-3+2=n-k+\dim_{o}(A\cap H)$.

It is known that for the case $k=1$ and any $n$,
one can obtain the regularity of $(A,o)$ (see \cite{GZopen-lelong}).
Then it is natural to consider the regularity of $(A,o)$ for general $k$.
However, the following example tells us that the regularity may not hold for general $I$:

\textbf{Example 2.} When $n=2$, $k=1$, $H=\{z_{2}=0\}$, $I=(z_{1})_{o'}$, $u=\log|z_{1}|+\log|z_{1}+z_{2}|$,
then $c^{I}_{o'}(u|_{H})=1=\sup_{\tilde{I}}\{c_{o}^{\tilde{I}}(u)\}$, $A=(\{z_{1}=0\}\cup\{z_{1}+z_{2}=0\})$,
$A\cap H=\{o\}$, and $(A,o)$ is not regular.
When $n=2$, $u=\log|z_{2}-z_{1}^{2}|$ and $(H=\{z_{2}=0\})$, one can obtain that $c_{o'}(u|_{H})=1/2<1=c_{o}(u)$.

By Example 1 and Example 2, it is natural to ask

\begin{Problem}
\label{prob:A_dim}
Assume that $c_{o'}(u|_{H})=c_{o}(u)$.

(1) Can one obtain $\dim_{o}A=n-k+\dim_{o}(A\cap H)$?

(2) If $(A\cap H,o)$ is regular,
can one obtain that $(A,o)$ is regular and $\dim(T_{A,o}+T_{H,o})=n$?
\end{Problem}

Note that
$\\$
(a) if $u=v=\log|z|$, then $c_{o}(\max\{u,v\})=\frac{1}{n}=\frac{2}{n}<c_{o}(u)+c_{o}(v)$;
$\\$
(b) if $n\geq2$, $u=\log|z'|$ and $v=\log|z''|$, then $c_{o}(\max\{u,v\})=n=k+(n-k)=c_{o}(u)+c_{o}(v)$,
where $z'=(z_{1},\cdots,z_{k})$ and $z''=(z_{k+1},\cdots,z_{n})$.

Therefore it is natural to consider the following problem about the sharp equality condition in
the generalized version of the fundamental subadditivity property of complex singularity exponents:
\begin{Problem}
\label{prob:equality_subadd_jump}
Let $u$ and $v$ be plurisubharmonic functions on $\Delta^{n}$.
Let $c=c_{o}(u)+c_{o}(v)$.
Let $A_{1}=V(\mathcal{I}(cu))$ and $A_{2}=V(\mathcal{I}(cv))$.
If
\begin{equation}
\label{equ:subadd_equ}
c_{o}(\max\{u,v\})=c,
\end{equation}
can one obtain that
\begin{equation}
\label{equ:subadd_approx}
dim_{o}A_{1}+dim_{o}A_{2}\geq n?
\end{equation}
\end{Problem}

\section{Main results and applications}\label{sec:main_result}

In this section, we
present the main results of the present paper.

\subsection{Main theorem: the solution of Problem \ref{prob:jump_equality}}
$\\$

In this section,
we solve Problem \ref{prob:jump_equality}

\begin{Theorem}
\label{t:jump_equality}(main theorem)
Suppose that equality \ref{equ:jump_equ} holds.
Then inequality \ref{equ:jump_equ_condition} holds.
\end{Theorem}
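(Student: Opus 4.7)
The plan is a proof by contradiction: assume $\dim_o A \leq n-k-1$ and construct a coherent ideal $\tilde I \subset \mathcal{O}_o$ with $\tilde I|_H = I$ and $\tilde I \subset \mathcal{I}(cu)_o$. Strong openness would then upgrade $\tilde I \subset \mathcal{I}(cu)_o$ to $c_o^{\tilde I}(u) > c$, contradicting the supremum in (\ref{equ:jump_equ}). Unpacking via strong openness, the hypothesis $c_{o'}^I(u|_H) = c$ is equivalent to $I \not\subset \mathcal{I}(cu|_H)_{o'}$, and the supremum equality is equivalent to the statement that no such $\tilde I$ lies in $\mathcal{I}(cu)_o$, i.e., for some generator $f$ of $I$ no $\mathcal{O}_o$-lift $\tilde f$ lies in $\mathcal{I}(cu)_o$. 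It therefore suffices to produce, for every generator $f_j$ of $I$, a lift $\tilde f_j \in \mathcal{I}(cu)_o$; the ideal $(\tilde f_1, \ldots, \tilde f_m)$ is then the required $\tilde I$.

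The lifts will be produced by a twisted Ohsawa-Takegoshi extension. Applied directly to the weight $cu$, the sharp $L^2$-extension theorem of Guan-Zhou produces an extension $\tilde f$ of $f$ with $\int|\tilde f|^2 e^{-2cu}$ controlled by $\int_H|f|^2 e^{-2cu|_H}$, but the latter is infinite by the hypothesis. The fix is a negative plurisubharmonic twist $\psi$ on a neighborhood of $o$ chosen so that (a) $\psi$ is singular enough along $A$ to guarantee $\mathcal{I}(cu + \varepsilon \psi)_o \subset \mathcal{I}(cu)_o$ for small $\varepsilon > 0$, and (b) $\psi|_H$ is locally integrable on $H$ near $o'$, so that $\int_H |f|^2 e^{-2cu|_H - 2\varepsilon \psi|_H}$ is finite. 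Applying the sharp OT to the weight $cu + \varepsilon \psi$ then yields $\tilde f_\varepsilon$ extending $f$ with $\tilde f_\varepsilon \in \mathcal{I}(cu + \varepsilon \psi)_o \subset \mathcal{I}(cu)_o$, and a standard normal-family argument combined with strong openness extracts a limit $\tilde f \in \mathcal{I}(cu)_o$ extending $f$.

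The main obstacle is the simultaneous choice of $\psi$ satisfying (a) and (b), and this is where the dimension hypothesis $\dim_o A \leq n-k-1$ is essential. A natural candidate is $\psi = \log \sum_i |g_i|^2$, with $g_i$ generators of a coherent ideal sheaf whose zero locus contains $A$ (for example a suitable power of $\mathcal{I}(cu)$ itself). Property (a) is then automatic from the definition of multiplier ideals, while (b) requires the zero locus of $\psi|_H$, essentially $A \cap H$, to have positive codimension in $H$. The codimension count $\dim_o A \leq n-k-1 < k$ gives this, except that it does not preclude components of $A$ being contained in $H$; this genuinely subtle case is handled by perturbing $H$ within a holomorphic family of nearby $k$-planes and exploiting lower-semicontinuity of complex singularity exponents together with the Lelong-number lemma \ref{Pro:GZ1005} and the Demailly-Pham estimates cited in the introduction to transfer the extension back to the original $H$. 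Uniformity of the twisted OT estimates in $\varepsilon$, needed to pass to the limit and close the contradiction, is the technical heart of the argument, and I would draw the required ingredients from Section \ref{sec:preparetory}.
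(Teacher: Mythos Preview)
Your twist idea has a basic sign problem that breaks step~(b). You want $\psi\le 0$ plurisubharmonic with poles along $A$, and you apply Ohsawa--Takegoshi with weight $cu+\varepsilon\psi$. On $H$ this gives integrand
\[
|f|^{2}\,e^{-2cu|_{H}-2\varepsilon\psi|_{H}}\;\ge\;|f|^{2}\,e^{-2cu|_{H}},
\]
and by strong openness the right-hand side is already \emph{non}-integrable near $o'$ for at least one generator $f$ of $I$ (precisely because $c=c_{o'}^{I}(u|_{H})$). Local integrability of $e^{-2\varepsilon\psi|_{H}}$ is irrelevant here; a negative twist only makes the $H$-integral worse. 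So no extension is produced for the generators that matter. If you instead lower the weight to $(1-\varepsilon)cu$ to restore integrability on $H$, then your inclusion~(a) is no longer automatic: you need $\mathcal{I}((1-\varepsilon)cu+N\varepsilon\psi)_{o}\subset\mathcal{I}(cu)_{o}$, which is exactly the content of the equisingular comparison in Proposition~\ref{Pro:GZ1005}/Remark~\ref{rem:20150130} and is not something you can get from ``$\psi$ singular along $A$'' alone. Your codimension claim ``$\dim_{o}A\le n-k-1<k$'' is also false in general (take $n$ large, $k$ small), and components of $A$ lying in $H$ are not a perturbation issue but the heart of the problem.

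The paper's argument is structured quite differently. It first replaces $u$ by $\tilde u=\max\{u,\tfrac{1}{l-1}\log|\mathcal{I}(u)|\}$ via Proposition~\ref{Pro:GZ1005}, preserving the equality hypothesis. Then it uses the dimension assumption $\dim A<n-k$ through Lemma~\ref{l:intersec_fibre} to find a $(k{+}1)$-plane $H_{1}\supset H$ with $H_{1}\cap A=H\cap A$, reducing to codimension one. On $H_{1}$ two opposite inequalities are produced for $\sup_{\tilde I|_{H}=I}c_{o''}^{\tilde I}(\max\{\tilde u|_{H_{1}},N\log|z_{k+1}|\})$: Corollary~\ref{Coro:jump=1} (from the equisingular comparison, using that the singular set of $\tilde u|_{H_{1}}$ lies in $\{z_{k+1}=0\}$) gives $\le 1$, while Corollary~\ref{coro:jump>1} (from the Manivel--Demailly extension Theorem~\ref{t:ot_plane_MD} with the extra factor $|z_{k+1}|^{-2a}$) gives $>1$. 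The crucial mechanism you are missing is that Theorem~\ref{t:ot_plane_MD} buys an honest gain $|z_{k+1}|^{-2a}$ in the ambient weight \emph{without} costing anything on $H$; this is what a generic twist $\psi$ cannot do.
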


\begin{Remark}
Note that the points in $A\cap H$ are not considered
in the proof of Theorem \ref{t:jump_equality},
then we obtain a more subtle conclusion:
$$dim_{o}(A\setminus H)\geq n-k.$$
\end{Remark}

Let $I=\mathcal{O}_{o'}$, then we obtain the following corollary of Theorem \ref{t:jump_equality}

\begin{Corollary}
\label{coro:lct_restrict_equ}
Suppose that $c_{o'}(u|_{H})=c_{o}(u)=:c.$
Then we have
$$dim_{o}A\geq n-k.$$
\end{Corollary}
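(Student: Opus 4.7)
The plan is to deduce Corollary \ref{coro:lct_restrict_equ} as a direct specialization of the main theorem (Theorem \ref{t:jump_equality}) to the case $I = \mathcal{O}_{o'}$. The whole task reduces to verifying that, under this choice of $I$, the hypothesis $c_{o'}(u|_{H}) = c_{o}(u)$ of the corollary coincides with the hypothesis (\ref{equ:jump_equ}) of Theorem \ref{t:jump_equality}; once this is checked, the conclusion $\dim_{o} A \geq n-k$ transfers verbatim.

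First I would identify the left-hand side of (\ref{equ:jump_equ}): by the reformulation $c_{o'}^{I}(u|_{H}) = \sup\{c \geq 0 : \mathcal{I}(c\,u|_{H})_{o'} \supseteq I\}$, when $I = \mathcal{O}_{o'}$ this is simply $c_{o'}(u|_{H})$. Next I would collapse the supremum on the right-hand side. The key observation is that any coherent ideal $\tilde{I} \subseteq \mathcal{O}_{o}$ with $\tilde{I}|_{H} = \mathcal{O}_{o'}$ must contain a germ $f$ whose restriction to $H$ is the constant $1$; then $f(o) = 1$, so $f$ is a unit in the local ring $\mathcal{O}_{o}$, which forces $\tilde{I} = \mathcal{O}_{o}$. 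Hence the supremum in (\ref{equ:jump_equ}) degenerates to the single value $c_{o}^{\mathcal{O}_{o}}(u) = c_{o}(u)$, matching the hypothesis of the corollary exactly.

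Because this is just bookkeeping after applying the main theorem, I do not expect a genuine obstacle: the only point requiring care is the unit-lifting argument above, which is routine. I would also remark, following the remark after Theorem \ref{t:jump_equality}, that the sharper statement $\dim_{o}(A \setminus H) \geq n-k$ holds in this corollary as well, since the specialization to $I = \mathcal{O}_{o'}$ does not affect the behaviour of the argument on $H$.
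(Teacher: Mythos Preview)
Your proposal is correct and matches the paper's approach exactly: the paper derives Corollary \ref{coro:lct_restrict_equ} by simply setting $I=\mathcal{O}_{o'}$ in Theorem \ref{t:jump_equality}, and your unit-lifting argument correctly justifies why the supremum on the right of (\ref{equ:jump_equ}) collapses to $c_{o}(u)$ in this case. The additional remark about $\dim_{o}(A\setminus H)\geq n-k$ is also in line with the paper.
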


When $k=1$, one can obtain that
$$A=\{z|c_{z}(u)\leq c_{o}(u)\}$$
is regular at $o$ by Siu's decomposition of positive closed currents
(\cite{siu74}, see also \cite{demailly-book,demailly2010}).
For details we refer to \cite{GZopen-lelong}.

However, when $k>1$ and $n>2$,
$\{z|c_{z}(u)\leq c_{o}(u)\}$ may not be regular at $o$,
e.g., let $u:=\log|z_{1}|+\log|z_{2}|$, $(k,n)=(2,3)$ and $H=\{z_{3}=0\}$,
then $\{z|c_{z}(u)\leq c_{o}(u)\}=(\{z_{1}=0\}\cup\{z_{2}=0\})$,
and $c_{o}(u)=c_{o'}(u|_{H})=1$.

\subsection{Applications of the main theorem: the solutions of Problem \ref{prob:A_dim} and Problem \ref{prob:equality_subadd_jump}}
$\\$

Using Corollary \ref{coro:lct_restrict_equ},
we give an affirmative answer to  Problem \ref{prob:A_dim} (1) by the following general result
\begin{Theorem}
\label{thm:jump_equality_dim_sing}
Assume that $\dim((A\cap H,o)\setminus(V(I),o))=\dim_{o}(A\cap H)$.
If equality \ref{equ:jump_equ} holds,
then $\dim_{o}A=n-k+\dim_{o}(A\cap H)$.
Especially
if $c_{o'}(u|_{H})=c_{o}(u)$,
then $\dim_{o}A=n-k+\dim_{o}(A\cap H)$.
\end{Theorem}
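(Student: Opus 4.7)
The plan is to combine the main theorem (through its Corollary \ref{coro:lct_restrict_equ}) with a transverse slicing argument along a top-dimensional component of $(A\cap H,o)$ that is not contained in $(V(I),o)$. The upper bound
$\dim_{o}A\le n-k+\dim_{o}(A\cap H)$
is automatic: for an analytic set $A$ and a smooth submanifold $H$ of codimension $n-k$ in $\mathbb{C}^{n}$, the standard intersection inequality gives $\dim_{o}(A\cap H)\ge \dim_{o}A-(n-k)$. Moreover, the "especially" statement falls out of the main assertion for free, since $I=\mathcal{O}_{o'}$ makes $V(I)$ empty and the auxiliary hypothesis becomes vacuous. So only the lower bound $\dim_{o}A\ge n-k+d$, with $d:=\dim_{o}(A\cap H)$, requires work.

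First, the hypothesis $\dim((A\cap H,o)\setminus(V(I),o))=d$ produces an irreducible component $B$ of the germ $(A\cap H,o)$ with $\dim B=d$ and $B\not\subseteq V(I)$. Choose a point $p\in B$ arbitrarily close to $o$ that is a smooth point of $B$ and lies outside $V(I)$; then $I_{p}=\mathcal{O}_{p}$ is the unit ideal near $p$. Pick local holomorphic coordinates at $p$ so that $H=\{z_{k+1}=\cdots=z_{n}=0\}$ and, after permutation, $B=\{z_{1}=\cdots=z_{k-d}=z_{k+1}=\cdots=z_{n}=0\}$, and set $L:=\{z_{k-d+1}=\cdots=z_{n}=0\}$, a smooth submanifold of $\mathbb{C}^{n}$ of dimension $k-d$ which is the transverse slice to $B$ inside $H$ at $p$.

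The next step is to propagate the equality from $o$ to $p$ and then to the smaller slice $L$. Since $p\in A\cap H$, one has both $c_{p}(u)\le c$ and $c_{p}(u|_{H})\le c$; together with lower semicontinuity of the complex singularity exponent and the equality at $o$, a sufficiently generic choice of $p$ yields $c_{p}(u)=c_{p}(u|_{H})=c$. Then, using the $d$-dimensional freedom supplied by $B\subseteq A\cap H$ — that is, the constancy of $c_{q}(u|_{H})$ along $B$ near $p$ — a generic transverse sub-slice $L\subset H$ still satisfies $c_{p}(u|_{L})=c_{p}(u)=c$. Granted this, since $I_{p}=\mathcal{O}_{p}$, Corollary \ref{coro:lct_restrict_equ} applied at the point $p$ with the submanifold $L$ of codimension $n-k+d$ in $\mathbb{C}^{n}$ delivers $\dim_{p}A\ge n-(k-d)=n-k+d$. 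Because $p\in B$ can be taken arbitrarily close to $o$ and $A$ is analytic, the bound propagates to $\dim_{o}A\ge n-k+d$, which combined with the upper bound completes the proof.

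The main obstacle is the slicing step: upgrading the trivial inequality $c_{p}(u|_{L})\le c_{p}(u)$ to an equality for a generic transverse $L$ of codimension $d$ in $H$ through a generic point of $B$. This is precisely where the $d$-dimensional component $B\subseteq A\cap H$ must enter, through the slicing behavior of complex singularity exponents (cf. Remark \ref{rem:lct_slice}); the hypothesis $B\not\subseteq V(I)$ is essential because it ensures that at $p$ the coherent ideal trivializes, reducing the general restriction equality to the simpler $I=\mathcal{O}$ form needed to invoke Corollary \ref{coro:lct_restrict_equ}.
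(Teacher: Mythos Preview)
Your overall strategy is sound and, once made precise, yields a proof that is actually \emph{shorter} than the paper's. But the slicing step as you have written it is not correct, and the fix matters.

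You write that for a generic transverse $(k-d)$--plane $L\subset H$ \emph{through the fixed point} $p$ one has $c_{p}(u|_{L})=c$. This is the Grassmannian slicing of Remark~\ref{rem:lct_slice}, and it only tells you that $c_{p}(u|_{L})$ equals some constant $c_{k-d}\le c_{p}(u|_{H})=c$; there is no reason for equality in general (take $u|_H=2\log|z_1|+\log|z_2|$ at the origin). What does work is the \emph{parallel} slicing encoded in Corollary~\ref{coro:lct_slice_open}: at a smooth point $p\in B\setminus V(I)$ you have $c_{p'}(u|_H)=c$ (this follows from Lemma~\ref{lem:jump_supp}, not from lower semicontinuity), and $c_{q'}(u|_H)\le c$ for all $q\in B$ near $p$; hence for almost every $b\in B$ near $p$ the transverse fibre $L_b$ satisfies $c_{b'}(u|_{L_b})=c$. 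For such $b$ one then gets $c=c_{b'}(u|_{L_b})\le c_{b}(u)\le c$, so Corollary~\ref{coro:lct_restrict_equ} applied at $b$ with the $(k-d)$--dimensional slice $L_b\subset\mathbb{C}^n$ gives $\dim_{b}A\ge n-(k-d)$, and you conclude. So the point at which you invoke Corollary~\ref{coro:lct_restrict_equ} must be allowed to move along $B$; your phrase ``through a generic point of $B$'' at the end suggests you sensed this, but the body of the argument keeps $p$ fixed.

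For comparison, the paper takes a different route after the same reduction to $I=\mathcal O$. It first replaces $u$ by an auxiliary $\tilde u=\max\{c_o(u)u,\,p_0\log|\mathcal I(c_o(u)u)|\}$ (Remark~\ref{rem:GZ1005}) so that the sublevel set $\{c_z(\tilde u)\le1\}$ coincides with $A$, then applies the slicing result in its singular form (Remark~\ref{rem:lct_slice_sing}) along a component $A_3$ of $A\cap H$, invokes Corollary~\ref{coro:lct_restrict_equ} \emph{inside each slice} $\tilde L_a$ to bound $\dim(A\cap\tilde L_a)$, and finally assembles these fibrewise bounds into $\dim_o A\ge n-l$ via a Hausdorff--measure/Fubini argument (Federer). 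Your approach avoids both the auxiliary $\tilde u$ and the measure-theoretic assembly by applying Corollary~\ref{coro:lct_restrict_equ} once, directly in $\mathbb{C}^n$, at a single well-chosen nearby point; the paper's route, on the other hand, never needs to pass to a smooth point of $A\cap H$ and handles the component $A_3$ through its local parametrization.
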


Using Theorem \ref{thm:jump_equality_dim_sing},
we give an affirmative answer to Problem \ref{prob:A_dim} (2).
\begin{Theorem}
\label{thm:lct_slice_graph}
Let $H=\{z_{k+1}=\cdots=z_{n}=0\}$.
If $c_{o'}(u|_{H})=c_{o}(u)$,
then the following statements are equivalent

$(1)$ $(A\cap H,o)$ is regular;

$(2)$ there exist coordinates $(w_{1},\cdots,w_{k},z_{k+1},\cdots,z_{n})$ near $o$ and $l\in\{1,\cdots,k\}$
such that $(A,o)=(w_{1}=\cdots=w_{l}=0,o)$;

$(3)$ there exist coordinates $(w_{1},\cdots,w_{k},z_{k+1},\cdots,z_{n})$ near $o$ and $l\in\{1,\cdots,k\}$
such that $\mathcal{I}(c_{o}(u)u)_{o}=(w_{1},\cdots,w_{l})_{o}$.
\end{Theorem}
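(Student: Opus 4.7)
The plan is to establish the cycle $(3)\Rightarrow(2)\Rightarrow(1)\Rightarrow(3)$. The first two implications follow formally from the coordinate descriptions, while $(1)\Rightarrow(3)$ is the substantive step and hinges on Theorem~\ref{thm:jump_equality_dim_sing}.

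For $(3)\Rightarrow(2)$: the ideal $(w_1,\ldots,w_l)_o$ is prime, hence equal to its own radical, so $V(\mathcal{I}(c_o(u)u)_o)=\{w_1=\cdots=w_l=0\}$; since the support of the coherent sheaf $\mathcal{O}/\mathcal{I}(c_o(u)u)$ coincides with the zero locus of its defining ideal, $(A,o)=(\{w_1=\cdots=w_l=0\},o)$. For $(2)\Rightarrow(1)$: since the coordinates $(w_1,\ldots,w_k,z_{k+1},\ldots,z_n)$ retain the last $n-k$ coordinates, $H$ is still $\{z_{k+1}=\cdots=z_n=0\}$, and $(A\cap H,o)$ is the coordinate subspace $\{w_1=\cdots=w_l=0,\,z_{k+1}=\cdots=z_n=0\}$ of dimension $k-l$, which is regular.

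For $(1)\Rightarrow(3)$, start with the dimension data. Applying Theorem~\ref{thm:jump_equality_dim_sing} under the hypothesis $c_{o'}(u|_H)=c_o(u)$ yields $\dim_o A=n-k+\dim_o(A\cap H)$; setting $l:=k-\dim_o(A\cap H)$ gives $\dim_o A=n-l$, with $1\leq l\leq k$ (the lower bound from $A\subsetneq\Omega$, hence $\dim_o A\leq n-1$; the upper bound from $\dim_o(A\cap H)\geq 0$, using $o\in A\cap H$ via Berndtsson's openness theorem). By regularity of $(A\cap H,o)$, pick holomorphic coordinates $(\tilde z_1,\ldots,\tilde z_k)$ on $H$ near $o$ with $(A\cap H,o)=(\{\tilde z_1=\cdots=\tilde z_l=0\}\cap H,o)$, and extend them to $\Omega$ by $w_j(z_1,\ldots,z_n):=\tilde z_j(z_1,\ldots,z_k)$ for $j=1,\ldots,k$; this produces a biholomorphism near $o$ that preserves $H$ and gives $(A\cap H,o)=(\{w_1=\cdots=w_l=0\}\cap H,o)$.

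The remaining task is to identify $(A,o)=(\{w_1=\cdots=w_l=0\},o)$ and then to upgrade this set equality to the ideal equality $\mathcal{I}(c_o(u)u)_o=(w_1,\ldots,w_l)_o$. For the inclusion $\{w_1=\cdots=w_l=0\}\subseteq A$ near $o$, fix a point $p=(0,\ldots,0,p_{l+1},\ldots,p_k,p_{k+1},\ldots,p_n)$ close to $o$ and consider the parallel translate $H_p:=\{z_{k+1}=p_{k+1},\ldots,z_n=p_n\}$ of $H$ through $p$; the equality $c_{o'}(u|_H)=c_o(u)$ propagates, via lower semi-continuity of $z\mapsto c_z(u)$ combined with the restriction inequality $c_{p'}(u|_{H_p})\leq c_p(u)$, to force $c_p(u)\leq c_o(u)$, so $p\in A$. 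The reverse inclusion follows from the dimension matching $\dim_o A=n-l=\dim_o\{w_1=\cdots=w_l=0\}$ together with the fact that every top-dimensional irreducible component of $(A,o)$ must contain the germ $(A\cap H,o)$ by proper intersection with $H$. Finally, the set equality is promoted to an ideal equality using the integral closedness of $\mathcal{I}(c_o(u)u)_o$: its radical is $(w_1,\ldots,w_l)_o$, and a direct $L^2$ integrability estimate places each generator $w_j$ into $\mathcal{I}(c_o(u)u)_o$, yielding the desired equality.

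The main obstacle is propagating the restriction-formula equality from $H$ to the nearby parallel translates $H_p$; the crucial inclusion $\{w_1=\cdots=w_l=0\}\subseteq A$ rests on this step, and it is where the sharpness furnished by Theorem~\ref{thm:jump_equality_dim_sing} is used most forcefully.
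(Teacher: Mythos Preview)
Your proof has a genuine gap at the heart of the implication $(1)\Rightarrow(3)$: the coordinates $w_j$ you construct need not cut out $A$. You extend the local equations $\tilde z_j$ of $A\cap H\subset H$ to all of $\Omega$ \emph{trivially}, setting $w_j(z_1,\ldots,z_n):=\tilde z_j(z_1,\ldots,z_k)$ independently of $z_{k+1},\ldots,z_n$. This forces $\{w_1=\cdots=w_l=0\}$ to be the cylinder over $A\cap H$ in the transverse directions, and there is no reason for $A$ to equal this cylinder. A concrete counterexample: take $n=2$, $k=1$, $H=\{z_2=0\}$ and $u=\log|z_1+z_2|$. Then $c_{o'}(u|_H)=c_o(u)=1$, $A=\{z_1+z_2=0\}$, $A\cap H=\{o\}$ is regular with $l=1$, and your procedure produces $w_1=z_1$; but $\{z_1=0\}\neq A$. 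The ``propagation'' argument you sketch (lower semi-continuity of $c_z(u)$ plus the restriction inequality on parallel translates $H_p$) cannot yield the needed \emph{upper} bound $c_p(u)\le c_o(u)$; lower semi-continuity goes the wrong way, and the example shows the conclusion is simply false for your choice of $w_j$. Consequently the final ``$L^2$ integrability estimate'' also fails: you cannot place $w_j$ in $\mathcal{I}(c_o(u)u)_o$ without knowing $V(w_j)\supseteq A$.

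The paper's proof supplies exactly the missing idea. Rather than extending the $f_j$ naively, one first passes to $\tilde u=\max\{c_o(u)u,\,p_0\log|J_0|\}$ so that $(V(\mathcal{I}(\tilde u|_H)),o)=(A\cap H,o)$ and $c_{o'}(\tilde u|_H)=1$; then Corollary~\ref{coro:jump_supp} guarantees $|f_j|^2e^{-2\tilde u|_H}$ is integrable near $o'$, and the Ohsawa--Takegoshi $L^2$ extension theorem (Remark~\ref{rem:ot_plane}) produces holomorphic $F_j$ on the ambient space with $F_j|_H=f_j$ and $|F_j|^2e^{-2\tilde u}$ integrable near $o$. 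The integrability forces $F_j\in\mathcal{I}(\tilde u)_o$, hence $\{F_1=\cdots=F_l=0\}\supseteq A$, and the linear independence of $dF_1|_o,\ldots,dF_l|_o,dz_{k+1}|_o,\ldots,dz_n|_o$ together with the dimension count $\dim_o A=n-l$ from Theorem~\ref{thm:jump_equality_dim_sing} gives equality. Thus the correct $w_j$ are obtained by $L^2$ extension, not by trivial extension, and may genuinely depend on $z_{k+1},\ldots,z_n$.
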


In the following part of this subsection,
we solve Problem \ref{prob:equality_subadd_jump}.

Denote by $H$ the diagonal of $\Delta^{n}\times\Delta^{n}$.
Using Proposition \ref{thm:add_prod_cse} ($\Omega_{1}\sim\Delta^{n}$, $\Omega_{2}\sim\Delta^{n}$, $I_{1}\sim\mathcal{O}_{o}$, $I_{2}\sim\mathcal{O}_{o}$)
and Corollary \ref{coro:lct_restrict_equ} ($u\sim \max\{u\circ\pi_{1},v\circ\pi_{2}\}$, $n\sim 2n$, $k\sim n$),
where $\sim$ means that the former is replaced by the latter, we obtain that
$$dim_{(o,o)}Supp(\mathcal{O}/\mathcal{I}(c\max\{u\circ\pi_{1},v\circ\pi_{2}\})\geq n.$$
Note that
$$Supp(\mathcal{O}/\mathcal{I}(c\max\{u\circ\pi_{1},v\circ\pi_{2}\})\subseteq Supp(\mathcal{O}/\mathcal{I}(cu\circ\pi_{1}))\cap Supp(\mathcal{O}/\mathcal{I}(cv\circ\pi_{2})),$$
then we give an affirmative answer to Problem \ref{prob:equality_subadd_jump}:
\begin{Theorem}
\label{thm:equality_subadd_jump}
If equality \ref{equ:subadd_equ} holds,
then inequality \ref{equ:subadd_approx} holds.
\end{Theorem}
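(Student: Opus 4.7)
The plan is to lift the subadditivity statement to a product setting and apply the main theorem through its Corollary \ref{coro:lct_restrict_equ}. Work on $\Delta^{n}\times\Delta^{n}$ with the coordinate projections $\pi_{1},\pi_{2}$, let $H\subset\Delta^{n}\times\Delta^{n}$ be the diagonal (an $n$-dimensional submanifold, so of codimension $n$), and set $w:=\max\{u\circ\pi_{1},v\circ\pi_{2}\}$. Under the identification $H\cong\Delta^{n}$, the restriction $w|_{H}$ is just $\max\{u,v\}$, so the assumption (\ref{equ:subadd_equ}) reads $c_{o'}(w|_{H})=c$, where $o'$ is the origin in $H$. Proposition \ref{thm:add_prod_cse} applied with $I_{1}=I_{2}=\mathcal{O}_{o}$ gives $c_{(o,o)}(w)=c_{o}(u)+c_{o}(v)=c$. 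Consequently, equality holds in the restriction formula,
\[c_{o'}(w|_{H})=c_{(o,o)}(w)=c,\]
and so Corollary \ref{coro:lct_restrict_equ}, applied in ambient dimension $2n$ with $k=n$, yields
\[\dim_{(o,o)}\,\mathrm{Supp}\bigl(\mathcal{O}/\mathcal{I}(cw)\bigr)\geq 2n-n=n.\]

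To translate this dimension bound into the desired inequality on $\dim_{o}A_{1}+\dim_{o}A_{2}$, I would compare $w$ with each of its components. Since $c>0$ and $w\geq u\circ\pi_{1}$, $w\geq v\circ\pi_{2}$ pointwise, the integrability definition of the multiplier ideal gives $\mathcal{I}(cu\circ\pi_{1})\subseteq\mathcal{I}(cw)$ and $\mathcal{I}(cv\circ\pi_{2})\subseteq\mathcal{I}(cw)$. Passing to supports,
\[\mathrm{Supp}\bigl(\mathcal{O}/\mathcal{I}(cw)\bigr)\subseteq\mathrm{Supp}\bigl(\mathcal{O}/\mathcal{I}(cu\circ\pi_{1})\bigr)\cap\mathrm{Supp}\bigl(\mathcal{O}/\mathcal{I}(cv\circ\pi_{2})\bigr)=A_{1}\times A_{2}.\]
Taking dimensions at $(o,o)$ and using $\dim_{(o,o)}(A_{1}\times A_{2})=\dim_{o}A_{1}+\dim_{o}A_{2}$ then produces the inequality (\ref{equ:subadd_approx}).

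The argument is essentially a clean reduction: all of the hard analytic content is already packaged into Proposition \ref{thm:add_prod_cse} and the main theorem. The only conceptual move is the product construction, which converts a subadditivity question on a single domain into a restriction question for the diagonal in the double. This is precisely the strategy originally used by Demailly--Koll\'ar in \cite{D-K01} to establish the subadditivity theorem itself; the present main theorem upgrades that classical estimate to its equality case. I expect no serious obstacle once the setup is in place; the main points to verify are that both inputs apply in the product setting, in particular that $c$ is finite (which is implicit in (\ref{equ:subadd_equ})) and that $w|_{H}\not\equiv-\infty$.
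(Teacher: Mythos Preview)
Your proof is correct and follows essentially the same route as the paper: lift to $\Delta^{n}\times\Delta^{n}$, use Proposition~\ref{thm:add_prod_cse} and the diagonal restriction to put yourself in the equality case of Corollary~\ref{coro:lct_restrict_equ}, and then pass from $\mathrm{Supp}(\mathcal{O}/\mathcal{I}(cw))$ to $A_{1}\times A_{2}$ via the pointwise inequalities $w\geq u\circ\pi_{1}$, $w\geq v\circ\pi_{2}$. The paper's argument is identical in structure and in the ingredients used.
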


Using Theorem \ref{thm:jump_equality_dim_sing},
we present the following remark of Theorem \ref{thm:equality_subadd_jump}

\begin{Remark}
\label{prop:add_dim_nonregular}
If equality \ref{equ:subadd_equ} holds,
then $\dim_{o}A_{1}+\dim_{o}A_{2}\geq n+\dim_{o}B$,
where $B=\{z|c_{z}(u)+c_{z}(v)\leq c\}$
is an analytic subset on $A_{1}\cap A_{2}$
(see subsection \ref{sec:add_nonregular}).
\end{Remark}

Let $n=2$, $u=\log|z_{1}|$, $v=\log|z_{1}-z_{2}^{2}|$.
As
$(|z_{1}|+|z_{2}^{2}|)/6\leq\max\{|z_{1}|,|z_{1}-z_{2}^{2}|\}\leq6(|z_{1}|+|z_{2}^{2}|)$,
it is clear that $c_{o}(\max\{u,v\})=1+1/2<2=c$,
$A_{1}\cap A_{2}=\{o\}$.
Then it is natural to consider the transversality between $A_{1}$ and $A_{2}$.

Using Theorem \ref{thm:lct_slice_graph},
we present
the following sharp equality condition in Theorem \ref{thm:subadd_cse_general}
by giving the regularity of $A_{1}$ and $A_{2}$ and the transversality between $A_{1}$ and $A_{2}$.

\begin{Proposition}
\label{prop:lct_add_graph}
Assume that $(A_{1},o)$ and $(A_{2},o)$ are both irreducible
such that $(B,o)=(A_{1}\cap A_{2},o)$, which is regular.
If equality \ref{equ:subadd_equ} holds,
then both $(A_{1},o)$ and $(A_{2},o)$ are regular such that $\dim (T_{A_{1},o}+T_{A_{2},o})=n$.
\end{Proposition}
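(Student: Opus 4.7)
I would apply Theorem \ref{thm:lct_slice_graph} on the product setup $W := \Delta^n \times \Delta^n$, $\tilde u := \max\{u \circ \pi_1, v \circ \pi_2\}$, with $H \subset W$ the diagonal, exactly as in Theorem \ref{thm:equality_subadd_jump}. Proposition \ref{thm:add_prod_cse} applied at $(o,o)$ gives $c_{(o,o)}(\tilde u) = c_o(u) + c_o(v) = c$, and the equality \ref{equ:subadd_equ} gives $c_{o'}(\tilde u|_H) = c_o(\max\{u,v\}) = c$. Applying Proposition \ref{thm:add_prod_cse} pointwise on $H$ shows that $A \cap H$, with $A := V(\mathcal{I}(c\tilde u))$, corresponds under the diagonal identification to the set $B = A_1 \cap A_2$, which is regular by hypothesis. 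Theorem \ref{thm:lct_slice_graph} then yields coordinates $(w_1,\ldots,w_n,b_1,\ldots,b_n)$ on $W$ near $(o,o)$, with $b_k = z_k - y_k$ normal to $H$, such that $(A,(o,o))$ is regular of dimension $n+b$ (with $b := \dim_o B$), $A = \{w_1 = \cdots = w_l = 0\}$, and $\mathcal{I}(c\tilde u)_{(o,o)} = (w_1,\ldots,w_l)_{(o,o)}$.

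Next I would identify $A$ with a product germ. For $(z,y)$ near $(o,o)$, $(z,y) \in A$ iff $c_z(u) + c_y(v) \le c$; since $z \mapsto c_z(u)$ is lower semi-continuous, one has $c_z(u) \ge c_o(u)$ and $c_y(v) \ge c_o(v)$, forcing equality in each factor. Hence $(A,(o,o)) = (\tilde A_1,o) \times (\tilde A_2,o)$, where $\tilde A_1 := V(\mathcal{I}(c_o(u)u))$ and analogously for $v$, and the regularity of $A$ implies the regularity of both $\tilde A_1$ and $\tilde A_2$. To promote this to $A_i = \tilde A_i$, I would combine the inequality $\dim_o A_1 + \dim_o A_2 \ge n+b$ from Remark \ref{prop:add_dim_nonregular} with the standard lower bound $b = \dim_o(A_1 \cap A_2) \ge \dim_o A_1 + \dim_o A_2 - n$ on intersection dimensions of analytic germs. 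This yields $\dim_o A_1 + \dim_o A_2 = n+b = \dim_o \tilde A_1 + \dim_o \tilde A_2$, so together with $\tilde A_i \subseteq A_i$ and the irreducibility of $(A_i,o)$, each $\tilde A_i = A_i$. Hence both $A_1$ and $A_2$ are regular.

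For the transversality $\dim(T_{A_1,o} + T_{A_2,o}) = n$, I would revisit the coordinate system from the first step. Since $A = A_1 \times A_2$ with $A_1, A_2$ smooth, $\mathcal{I}(c\tilde u)_{(o,o)} = \mathcal{I}_{A_1 \times A_2,(o,o)} = \pi_1^*\mathcal{I}_{A_1,o} + \pi_2^*\mathcal{I}_{A_2,o}$. Comparing the minimal generating set $(w_1,\ldots,w_l)$ with the natural one coming from the product structure shows $\operatorname{span}\{dw_i|_{(o,o)} : i \le l\} = T_{A_1,o}^\perp \oplus T_{A_2,o}^\perp$ inside $T^*_{(o,o)}W = dz \oplus dy$. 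The forms $db_k|_{(o,o)} = dz_k - dy_k$ span the ``anti-diagonal'' in $dz \oplus dy$, and its intersection with $T_{A_1,o}^\perp \oplus T_{A_2,o}^\perp$ equals $\{(\xi,-\xi) : \xi \in T_{A_1,o}^\perp \cap T_{A_2,o}^\perp\}$. A dimension count then shows the span of all $dw_i$ and $db_k$ has codimension at least $\dim(T_{A_1,o}^\perp \cap T_{A_2,o}^\perp) = n - \dim(T_{A_1,o} + T_{A_2,o})$ in $T^*_{(o,o)}W$; since these differentials must span the cotangent space (as $(w,b)$ are coordinates), this codimension is zero, giving $T_{A_1,o} + T_{A_2,o} = \mathbb{C}^n$.

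The main obstacle I anticipate is this last step: extracting the transversality from the mere existence of an $H$-adapted coordinate system requires both the identification $\operatorname{span}\{dw_i\} = T_{A_1}^\perp \oplus T_{A_2}^\perp$ (via minimality of generators of the multiplier ideal) and a careful cotangent-space dimension count that tracks how these generators interact with the ``anti-diagonal'' span of the $b_k$-differentials. The other steps are more direct once one realises that the local semi-continuity argument collapses $A$ to the product $\tilde A_1 \times \tilde A_2$, after which the dimension of intersection inequality closes the gap between $A_i$ and $\tilde A_i$.
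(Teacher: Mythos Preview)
Your overall strategy---lifting to the product $\Delta^n\times\Delta^n$ with the diagonal as $H$ and invoking Theorem~\ref{thm:lct_slice_graph}---is exactly what the paper does. The gap is in your second paragraph: lower semicontinuity of $z\mapsto c_z(u)$ only gives $\liminf_{z\to o}c_z(u)\ge c_o(u)$; it does \emph{not} imply $c_z(u)\ge c_o(u)$ for all $z$ in a fixed neighbourhood of $o$. Hence you cannot conclude from $c_z(u)+c_y(v)\le c$ that each summand equals its value at the origin, and your identification $(A,(o,o))=(\tilde A_1,o)\times(\tilde A_2,o)$ is unjustified. (The auxiliary sets $\tilde A_i=V(\mathcal I(c_o(u)u))$ are in any case not the $A_i=V(\mathcal I(cu))$ of the statement, and there is no a~priori reason they should coincide.)

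The paper bypasses this detour. One has the trivial inclusion $A\subseteq A_1\times A_2$ (from $c_z(u)+c_y(v)\le c\Rightarrow c_z(u)\le c$ and $c_y(v)\le c$), and since $(B,o)=(A_1\cap A_2,o)$ one also has $(A_1\times A_2)\cap\Delta=A\cap\Delta$. Your own dimension ingredients then give $\dim_{(o,o)}(A_1\times A_2)=n+b=\dim_{(o,o)}A$; combined with the regularity of $(A,(o,o))$ from Theorem~\ref{thm:lct_slice_graph} and the irreducibility of $A_1\times A_2$, this forces $A=A_1\times A_2$ directly, whence both $A_i$ are regular. For the transversality the paper simply quotes the linear-algebra fact $\dim(T_{A_1\times A_2,(o,o)}\cap T_{\Delta,(o,o)})=\dim(T_{A_1,o}\cap T_{A_2,o})$ (Remark~\ref{lem:linear}) and reads off $\dim(T_{A_1,o}+T_{A_2,o})=n$ from the transversality of $A$ and $\Delta$ already supplied by Theorem~\ref{thm:lct_slice_graph}; your cotangent computation would reach the same conclusion but is more work than needed.
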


\subsection{Two sharp relations on jumping numbers and application}

\subsubsection{A sharp upper bound of jumping numbers}
$\\$

Let $I\subseteq\mathcal{O}_{o}$ and $IJ\subseteq\mathcal{I}(c^{I}_{o}(u)u)_{o}$
$(\Leftrightarrow c^{IJ}_{o}(u)>c^{I}_{o}(u))$ be coherent ideals.
Using the strong openness property,
we obtain the following inequality on jumping numbers:

\begin{Theorem}
\label{t:GZ_jump_sharp}
$\frac{c^{I}_{o}(u)}{c^{IJ}_{o}(u)-c^{I}_{o}(u)}\geq c_{o}^{I}(\log|J|).$
\end{Theorem}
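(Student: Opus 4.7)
The plan is to derive the inequality from H\"{o}lder's inequality applied to the two integrability statements that define $c_{o}^{IJ}(u)$ and $c_{o}^{I}(\log|J|)$. Set $a := c_{o}^{I}(u)$, $b := c_{o}^{IJ}(u)$, $c := c_{o}^{I}(\log|J|)$. The hypothesis $IJ \subseteq \mathcal{I}(c_{o}^{I}(u)u)_{o}$ is, by strong openness, equivalent to $b > a$, and the conclusion rearranges to $a(1+c) \geq bc$; the case $c=0$ is trivial, so I assume $c>0$.

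For any $c' \in (0,c)$ and $b' \in (0,b)$, the definition of the jumping number (combined with the monotone behavior of integrability in the exponent) yields local integrability near $o$ of both $|I|^{2}/|J|^{2c'}$ (since $c' < c_{o}^{I}(\log|J|)$) and $|IJ|^{2} e^{-2b'u}$ (since $b' < c_{o}^{IJ}(u)$). Taking generators of $IJ$ to be products of generators of $I$ and $J$ gives $|IJ|^{2} = |I|^{2}|J|^{2}$, so the latter integrand may be written as $|I|^{2}|J|^{2} e^{-2b'u}$. I then apply H\"{o}lder's inequality with conjugate exponents
$$p = \frac{1+c'}{c'}, \qquad q = 1+c',$$
chosen so that simultaneously $\tfrac{1}{p}+\tfrac{1}{q}=1$ and $\tfrac{2}{p}-\tfrac{2c'}{q}=0$ (the latter forces the $|J|$-factors to cancel). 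A direct computation gives
$$\bigl(|I|^{2}|J|^{2}e^{-2b'u}\bigr)^{1/p}\bigl(|I|^{2}/|J|^{2c'}\bigr)^{1/q} = |I|^{2} e^{-2tu}, \qquad t = \frac{b'}{p} = \frac{b'c'}{1+c'},$$
and H\"{o}lder then yields local integrability of $|I|^{2}e^{-2tu}$ near $o$, hence $a \geq b'c'/(1+c')$.

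Letting $c' \uparrow c$ and $b' \uparrow b$ produces $a \geq bc/(1+c)$, which rearranges to the desired bound $a/(b-a) \geq c$ (using $b-a>0$). The only non-mechanical step is the choice of H\"{o}lder exponents: the two requirements (conjugacy and cancellation of the $|J|$-power) uniquely determine $p$ and $q$ as above. Once that is seen, the remaining work—verifying the Hölder product identity and taking the strict-inequality limits $c'\uparrow c$, $b'\uparrow b$—is routine.
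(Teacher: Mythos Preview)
Your proof is correct and takes a different route from the paper's. The paper deduces the inequality from its preparatory result that for every $l \in \bigl(1,\, c_{o}^{IJ}(u)/c_{o}^{I}(u)\bigr]$ the auxiliary weight $\tilde{\varphi}_{l} = \max\{c_{o}^{I}(u)\,u,\ \tfrac{1}{l-1}\log|J|\}$ satisfies the \emph{equality} $c_{o}^{I}(\tilde{\varphi}_{l}) = 1$; monotonicity (via $\tilde{\varphi}_{l} \geq \tfrac{1}{l-1}\log|J|$) then gives the theorem immediately. That preparatory result is proved by bounding the difference $|I|^{2}\bigl(e^{-2c_{o}^{I}(u)u} - e^{-2\tilde{\varphi}_{l}}\bigr)$ on the set $\{c_{o}^{I}(u)\,u < \tfrac{1}{l-1}\log|J|\}$, an argument the authors trace to their equisingular-approximation/strong-openness machinery. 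Your direct H\"{o}lder interpolation between the two defining integrabilities bypasses the auxiliary weight entirely and uses nothing beyond the definition of the jumping number; it is shorter and more elementary. What the paper's approach buys in exchange is the stronger intermediate statement $c_{o}^{I}(\tilde{\varphi}_{l}) = 1$, which it reuses later (for instance in the proof of Theorem~\ref{thm:sharp_jump_inequ}).
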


Given a coherent ideal $J\subseteq \mathcal{O}_{o}$. Letting $I=\mathcal{O}_{o}$,
we obtain the following sharp upper bound of
the jumping numbers represented by the complex singularity exponents:

\begin{Corollary}
\label{coro:GZ_jump_sharp}
$c^{J}_{o}(u)\leq\frac{c_{o}(u)}{c_{o}(\log|J|)}+c_{o}(u).$
\end{Corollary}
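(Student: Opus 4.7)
The plan is to derive Corollary \ref{coro:GZ_jump_sharp} as the specialization $I=\mathcal{O}_{o}$ of Theorem \ref{t:GZ_jump_sharp}. Under this choice, the hypothesis $IJ\subseteq\mathcal{I}(c^{I}_{o}(u)u)_{o}$ collapses to $J\subseteq\mathcal{I}(c_{o}(u)u)_{o}$, which by the reformulation $c^{J}_{o}(u)=\sup\{c\geq 0:\mathcal{I}(cu)_{o}\supseteq J\}$ of the jumping number (combined with the strong openness property recalled earlier in the paper) is equivalent to the strict inequality $c^{J}_{o}(u)>c_{o}(u)$.

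Assume first that $c^{J}_{o}(u)>c_{o}(u)$. Substituting $I=\mathcal{O}_{o}$ into the conclusion of Theorem \ref{t:GZ_jump_sharp} gives
$$\frac{c_{o}(u)}{c^{J}_{o}(u)-c_{o}(u)}\;\geq\;c_{o}(\log|J|).$$
For a proper coherent ideal $J$, the function $\log|J|$ is plurisubharmonic with $-\infty$ pole at $o$, hence $c_{o}(\log|J|)\in(0,+\infty]$. Since both sides are positive, I would cross-multiply by $c^{J}_{o}(u)-c_{o}(u)>0$ and $c_{o}(\log|J|)>0$ to obtain $c^{J}_{o}(u)-c_{o}(u)\leq c_{o}(u)/c_{o}(\log|J|)$, which is exactly the asserted bound.

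The only remaining alternative is $c^{J}_{o}(u)=c_{o}(u)$ (recall $c^{J}_{o}(u)\geq c_{o}(u)$ always, because $|J|$ is locally bounded so $|J|^{2}e^{-2cu}$ is integrable whenever $e^{-2cu}$ is). In that case the corollary is immediate, because the right-hand side is $\geq c_{o}(u)=c^{J}_{o}(u)$. The degenerate case $J=\mathcal{O}_{o}$ is handled similarly, with the convention $c_{o}(\log 1)=+\infty$ making the right-hand side equal to $c_{o}(u)=c^{\mathcal{O}_{o}}_{o}(u)$.

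The substantive content lies entirely in Theorem \ref{t:GZ_jump_sharp}; the corollary is a purely algebraic rearrangement. The only bookkeeping subtlety, and the only point that needs care, is isolating the case $c^{J}_{o}(u)>c_{o}(u)$ so that one may legitimately divide by $c^{J}_{o}(u)-c_{o}(u)$ and apply the theorem whose hypothesis demands this strict containment.
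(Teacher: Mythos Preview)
Your proof is correct and follows essentially the same approach as the paper: specialize Theorem~\ref{t:GZ_jump_sharp} to $I=\mathcal{O}_{o}$, rearrange the resulting inequality when $c^{J}_{o}(u)>c_{o}(u)$, and treat the residual case $c^{J}_{o}(u)=c_{o}(u)$ directly. The paper phrases the dichotomy as ``$J\subseteq\mathcal{I}(c_{o}(u)u)_{o}$ or not'' and then argues that the second alternative forces $c^{J}_{o}(u)=c_{o}(u)$, which is exactly your observation that $c^{J}_{o}(u)\geq c_{o}(u)$ always holds; the two presentations are equivalent.
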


The following remark illustrates the sharpness of Corollary \ref{coro:GZ_jump_sharp}:

\begin{Remark}
\label{rem:sharp_inequ}
Let $(z_{1},\cdots,z_{n})$ be the coordinates of $\mathbb{C}^{n}$.
Suppose $u:=c\log|z_{n}|$ and $J=(z_{n}^{k})$.
Then we have $c^{J}_{o}(u)=\frac{k+1}{c}$, $c_{o}(\log|J|)=\frac{1}{k}$
and $c_{o}(u)=\frac{1}{c}$.
This gives the sharpness of Corollary \ref{coro:GZ_jump_sharp}.

More general,
when $J$ is principal ideal
(i.e., $J=(f)_{o}$),
and $u=c\log|f|$,
then we have $c^{J}_{o}(u)=\frac{1+c_{o}(\log|f|)}{c}$ and $c_{o}(u)=\frac{c_{o}(\log|f|)}{c}$.
This implies the sharpness of Corollary \ref{coro:GZ_jump_sharp}.
\end{Remark}

\subsubsection{A sharp inequality for jumping numbers and their slice restrictions}
$\\$

In this subsection, we present the following sharp inequality for jumping numbers and their restrictions on hyperplanes.

\begin{Theorem}
\label{thm:sharp_jump_inequ}
Let $\varphi$ be a plurisubharmonic function near the $o\in\mathbb{C}^{n}$,
and $h:=z_{n}$.
Let $H:=\{z_{n}=0\}$, and let $I\subset\mathcal{O}_{o'}$ be a coherent ideal,
where $o'$ is the origin in $H$.
Let $b_{0}:=\sup\{c_{o}^{\tilde{I}}(\varphi)|\tilde{I}\subseteq\mathcal{O}_{o}\& \tilde{I}|_{H}=I\}$,
and let $b_{1}:=\inf\{c_{o}^{\tilde{I}h}(\varphi)-c_{o}^{\tilde{I}}(\varphi)|\tilde{I}\subseteq\mathcal{O}_{o}\& \tilde{I}|_{H}=I\}$.
Then
\begin{equation}
\label{equ:sharp_jump}
b_{0}-c_{o'}^{I}(\varphi|_{H})\geq b_{1}.
\end{equation}
\end{Theorem}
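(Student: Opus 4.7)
The plan, writing $c_* := c_{o'}^{I}(\varphi|_H)$, is to fix an arbitrary $c < c_*$ and to produce a coherent extension $\tilde J$ of $I$ contained in $\mathcal{I}((c+b_1)\varphi)_o$; this gives $b_0 \geq c_o^{\tilde J}(\varphi) > c+b_1$, and letting $c \nearrow c_*$ then yields $b_0 \geq c_* + b_1$, which is equivalent to \eqref{equ:sharp_jump}.

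First I would invoke the Ohsawa--Takegoshi $L^2$ extension theorem. Since $c < c_*$, strong openness gives $I \subseteq \mathcal{I}(c\varphi|_H)_{o'}$, and OT then produces a coherent extension $\tilde I$ of $I$ with $\tilde I \subseteq \mathcal{I}(c\varphi)_o$, so that $c_o^{\tilde I}(\varphi) > c$. By the defining infimum for $b_1$,
$$c_o^{\tilde I\,h}(\varphi) \;\geq\; c_o^{\tilde I}(\varphi) + b_1 \;>\; c + b_1,$$
that is, $\tilde I\,h \subseteq \mathcal{I}((c+b_1)\varphi)_o$; concretely, each generator $\tilde g$ of $\tilde I$ satisfies $\int |\tilde g|^2\,|h|^2\,e^{-2(c+b_1)\varphi}\,dV < \infty$ near $o$.

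The heart of the argument is the next step: I would modify each generator $\tilde g$ of $\tilde I$ into $\tilde g' := \tilde g - h\,w$, with a suitable holomorphic correction $w \in \mathcal{O}_o$, in such a way that $\tilde g' \in \mathcal{I}((c+b_1)\varphi)_o$ while the restriction $\tilde g'|_H = \tilde g|_H \in I$ is automatically preserved. The ideal $\tilde J$ generated by these $\tilde g'$ then extends $I$ and sits inside $\mathcal{I}((c+b_1)\varphi)_o$, as required. The existence of $w$ for each generator is equivalent to $\tilde g \in (h) + \mathcal{I}((c+b_1)\varphi)_o$, i.e.\ to the inclusion $I \subseteq \mathcal{I}((c+b_1)\varphi)|_H$; analytically it is obtained by a refined $L^2$-extension argument in which the extra factor $|h|^2$ available in the hypothesis $\tilde I\,h \subseteq \mathcal{I}((c+b_1)\varphi)_o$ is exploited to absorb the additional $e^{-2b_1\varphi}$ into the weight. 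This is a sharpened form of Ohsawa--Takegoshi of the same kind that underlies the paper's solution of the strong openness conjecture, and it is closely connected to the fibrewise Bergman kernel relation developed in Section~\ref{sec:Berndtsson}.

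The main obstacle is precisely this absorption step: ordinary Ohsawa--Takegoshi only delivers the extension $\tilde I \subseteq \mathcal{I}(c\varphi)_o$, so the additional gain of $b_1$ in the weight has to be extracted entirely from the extra vanishing along $H$ encoded in $\tilde I\,h$ compared to $\tilde I$, and executing this cleanly is where strong openness together with the optimal-constants form of $L^2$ extension enter essentially.
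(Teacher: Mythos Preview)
Your proposal has a genuine gap at precisely the step you label the ``heart of the argument''. From $\tilde I h\subseteq\mathcal{I}((c+b_1)\varphi)_o$ you want to produce, for each generator $\tilde g$ of $\tilde I$, a correction $w$ with $\tilde g-hw\in\mathcal{I}((c+b_1)\varphi)_o$. You correctly note that this amounts to $I\subseteq\mathcal{I}((c+b_1)\varphi)|_H$, but you do not prove it: the only datum available is the ambient integrability $\int|\tilde g|^2|h|^2e^{-2(c+b_1)\varphi}<\infty$, whereas any Ohsawa--Takegoshi type extension would require the \emph{slice} integrability $\int_H|\tilde g|_H|^2e^{-2(c+b_1)\varphi|_H}<\infty$, i.e.\ $c+b_1<c_*$, which is the wrong direction. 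The allusion to Section~\ref{sec:Berndtsson} does not fill this; Proposition~\ref{prop:bergman20150130} characterises integrability of $e^{-2u}$, not of $|\tilde g|^2e^{-2u}$ against a fixed non-constant $\tilde g$, and in any case gives no mechanism for trading the ambient factor $|h|^2$ for an upgrade of the weight exponent. As stated, the ``absorption step'' is a restatement of the conclusion rather than a proof of it.

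The paper circumvents exactly this difficulty by never trying to place the extension directly in $\mathcal{I}((c+b_1)\varphi)_o$. Instead it works with the auxiliary weight $\max\{\varphi,\tfrac{1}{b_1}\log|h|\}$ and plays two inequalities against each other. On one side, Corollary~\ref{coro:jump_compare20150424} (a consequence of the Manivel--Demailly extension Theorem~\ref{t:ot_plane_MD}, where the extra $|h|^{-2a}$ in the weight is what does the work you were hoping to do by hand) yields
\[
\sup_{\tilde I|_H=I} c_o^{\tilde I}\bigl(\max\{\varphi,\tfrac{1}{b_1}\log|h|\}\bigr)\ \ge\ c_{o'}^{I}(\varphi|_H)+b_1.
\]
On the other side, Remark~\ref{rem:GZ1005} (a consequence of strong openness) gives, for every $\tilde I$,
\[
c_o^{\tilde I}\Bigl(\max\bigl\{c_o^{\tilde I}(\varphi)\varphi,\ \tfrac{c_o^{\tilde I}(\varphi)}{c_o^{\tilde I h}(\varphi)-c_o^{\tilde I}(\varphi)}\log|h|\bigr\}\Bigr)=1,
\]
which, combined with $\tfrac{1}{c_o^{\tilde I h}(\varphi)-c_o^{\tilde I}(\varphi)}\le\tfrac{1}{b_1}$, forces $c_o^{\tilde I}(\max\{\varphi,\tfrac{1}{b_1}\log|h|\})\le c_o^{\tilde I}(\varphi)\le b_0$. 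These two bounds together give $b_0\ge c_{o'}^{I}(\varphi|_H)+b_1$. The point is that the passage through $\max\{\varphi,N\log|h|\}$ converts your unproved implication into a comparison that the extension theorem with singular weight \emph{can} establish.
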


The sharpness of Theorem \ref{thm:sharp_jump_inequ} is illustrated as follows

\begin{Remark}
Let $(k,n)=(1,2)$ ($H=\{z_{2}=0\}$).
Let $I=(z_{1})_{o'}$, and let $\varphi=\log|z|$.
Then
$c_{o'}^{I}(\varphi|_{H})=2$ and
$c_{o}^{\tilde{I}}(\varphi)=3$
for any $\tilde{I}$ (consider $|z|^{2}$ instead of $|\tilde{I}|^{2}$ and by symmetry),
then
$b_{0}=3$.

Note that
$\log|z|^{4}\geq \log|\tilde{I}h|^{2}+O(1)$.
Then it follows that
$c_{o}^{\tilde{I}h}(\varphi)\geq 4$
for any $\tilde{I}$,
i.e. $b_{1}\geq4-3=1$.
By equality \ref{equ:sharp_jump} and $b_{0}-c_{o'}^{I}(\varphi|_{H})=3-2=1$,
it follows that
$1=b_{0}-c_{o'}^{I}(\varphi|_{H})\geq b_{1}\geq 1$.
Then we obtain the sharpness of Theorem \ref{thm:sharp_jump_inequ}.
\end{Remark}

Let $I=\mathcal{O}_{o'}$, then $\tilde{I}=\mathcal{O}_{o}$.
By Theorem \ref{thm:sharp_jump_inequ},
it follows that
\begin{equation}
\label{equ:sharp_lct}
2c_{o}(\varphi)-c_{o}^{h}(\varphi)\geq c_{o'}(\varphi|_{H}).
\end{equation}

\subsubsection{A slicing result on complex singularity exponents and an application of Theorem \ref{thm:sharp_jump_inequ}}
$\\$

Let $C(V_{k}):=c_{o'}(u|_{V_{k}})$ be a function on
the Grassmannian $G(k, n)$ of $k$- dimensional linear subspaces $V_{k}$ in $\mathbb{C}^n$,
where $o'\in V_{k}$ is the origin.

Stimulated by Siu's slicing theorem on Lelong numbers \cite{siu74},
one can reformulate a
slicing result on complex singularity exponents,
which is implied by the combination of
Berndtsson's log-subharmonicity of Bergman kernels \cite{bern_bergman} and solution of
the openness conjecture:

\begin{Remark}
\label{rem:lct_slice}
There exists $c_{k}\in\mathbb{R}^{+}\cup\{+\infty\}$ such that
$C(V_{k})\equiv c$ almost everywhere in the sense of the unique $U(n)$-invariant measure of
mass 1 on the Grassmannian $G(k, n)$.
Moveover
$c_{k}$ is the upper bound of $c_{o'}(u|_{V_{k}})$ for any $V_{k}$ (details see Lemma \ref{lem:lct_slicing}).
\end{Remark}

When $k=1$, it follows that $c_{o'}(V_{k})=\frac{1}{\nu(u|_{V_{k}},o')}$,
where $\nu(u|_{V_{k}},o')$ is the Lelong number of $u|_{V_{k}}$ at $o'$.
Then Remark \ref{rem:lct_slice} degenerates to Siu's slicing theorem on Lelong numbers \cite{siu74} (see also \cite{demailly-book}) when $k=1$.

Using Theorem \ref{thm:sharp_jump_inequ},
we obtain the following sharp decreasing property of the intervals between consecutive $c_{k}$:
\begin{Corollary}
\label{coro:sharp_lct}
$c_{k}-c_{k-1}\geq c_{k+1}-c_{k}$ holds for any $k\in\{2,\cdots,n-1\}$.
\end{Corollary}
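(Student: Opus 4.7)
The plan is to combine inequality \ref{equ:sharp_lct} (the consequence of Theorem \ref{thm:sharp_jump_inequ} when $I=\mathcal{O}_{o'}$) with a Fubini-type slicing that interprets the twist $|h|^{2}$ as the Jacobian of a change of variables from a $(k+1)$-plane to a pencil of $k$-planes. In this way the ``extra'' term $c_{o}^{h}(\varphi)-c_{o}(\varphi)$ on the right of inequality \ref{equ:sharp_lct}, applied on a suitable $k$-plane, absorbs exactly the gap $c_{k+1}-c_{k}$.

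To set up, I first use Remark \ref{rem:lct_slice} to pick a generic $V_{k+1}\in G(k+1,n)$ with $c_{o}(u|_{V_{k+1}})=c_{k+1}$ and, applying the slicing once more to $u|_{V_{k+1}}$ on $V_{k+1}$, a generic $V_{k-1}\subset V_{k+1}$ with $c_{o}(u|_{V_{k-1}})=c_{k-1}$ (for generic $V_{k+1}$, generic $V_{k-1}\subset V_{k+1}$ is also generic in $G(k-1,n)$, so the two slicing values coincide). I choose coordinates $(z_{1},\dots,z_{k+1})$ on $V_{k+1}\cong\mathbb{C}^{k+1}$ with $V_{k-1}=\{z_{k}=z_{k+1}=0\}$ and parametrize the pencil of $k$-planes through $V_{k-1}$ in $V_{k+1}$ by $\lambda\in\mathbb{C}$, putting $V_{k}^{\lambda}:=\{z_{k+1}=\lambda z_{k}\}$; each $V_{k}^{\lambda}$ contains $V_{k-1}$, and on $V_{k}^{\lambda}$ the function $z_{k}$ is a coordinate cutting out $V_{k-1}$.

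The crucial step is the birational map $\Phi(z_{1},\dots,z_{k},\lambda):=(z_{1},\dots,z_{k},\lambda z_{k})$ from $\mathbb{C}^{k+1}_{(z,\lambda)}$ onto $V_{k+1}\setminus\{z_{k}=0\}$, whose holomorphic Jacobian is $z_{k}$ and hence whose real Jacobian is $|z_{k}|^{2}$. For any $c<c_{k+1}$ the integral $\int_{V_{k+1}}e^{-2cu|_{V_{k+1}}}\,dV$ is finite near $o$, and $\Phi$ together with Fubini gives
$$\int_{V_{k+1}}e^{-2cu|_{V_{k+1}}}\,dV=\int d\lambda\,d\bar\lambda\int_{V_{k}^{\lambda}}|z_{k}|^{2}e^{-2cu|_{V_{k}^{\lambda}}}\,dV,$$
so for a.e.\ $\lambda$ in a small disk around $0$ the inner integral is finite, i.e., $c_{o}^{z_{k}}(u|_{V_{k}^{\lambda}})\geq c$. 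Letting $c\nearrow c_{k+1}$ through a sequence yields $c_{o}^{z_{k}}(u|_{V_{k}^{\lambda}})\geq c_{k+1}$ for a.e.\ such $\lambda$.

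Fixing any such $\lambda_{0}$ and applying inequality \ref{equ:sharp_lct} to $u|_{V_{k}^{\lambda_{0}}}$ with $h=z_{k}$ and $H=V_{k-1}$ gives
$$c_{o}(u|_{V_{k}^{\lambda_{0}}})-c_{k-1}\geq c_{o}^{z_{k}}(u|_{V_{k}^{\lambda_{0}}})-c_{o}(u|_{V_{k}^{\lambda_{0}}})\geq c_{k+1}-c_{o}(u|_{V_{k}^{\lambda_{0}}}),$$
whence $2c_{o}(u|_{V_{k}^{\lambda_{0}}})\geq c_{k-1}+c_{k+1}$. Since $c_{o}(u|_{V_{k}^{\lambda_{0}}})\leq c_{k}$ by the upper-bound part of Remark \ref{rem:lct_slice}, we obtain $2c_{k}\geq c_{k-1}+c_{k+1}$, i.e., $c_{k}-c_{k-1}\geq c_{k+1}-c_{k}$. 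The main obstacle/insight is the change-of-variables step: $\Phi$ degenerates along $\{z_{k}=0\}=V_{k-1}$, and the Jacobian $|z_{k}|^{2}$ absorbs this degeneration in exactly the right way so that the ideal $(z_{k})$ appears as the twist on each slice $V_{k}^{\lambda}$, converting ambient integrability on $V_{k+1}$ into a lower bound for the jumping number $c_{o}^{z_{k}}$ on $k$-dimensional slices; one also has to take care that the a.e.\ $\lambda$ lies in a small disk around $0$ so that all integrals remain local near $o$.
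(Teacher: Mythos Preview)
Your proof is correct and follows essentially the same route as the paper's: both arguments use the blow-up map $\Phi(z_1,\dots,z_k,\lambda)=(z_1,\dots,z_k,\lambda z_k)$ (the paper's map $p$ with $n=k+1$) so that the Jacobian $|z_k|^2$ turns integrability of $e^{-2cu}$ on $V_{k+1}$ into the bound $c_o^{z_k}(u|_{V_k^{\lambda}})\geq c_{k+1}$ for a.e.\ $\lambda$, and then apply inequality \eqref{equ:sharp_lct} on $V_k^{\lambda}$ together with $c_o(u|_{V_k^{\lambda}})\leq c_k$. The only cosmetic difference is that the paper first reduces to $n=k+1$ via Remark~\ref{rem:slicing} (proved through Remark~\ref{rem:Grassman}), whereas you argue the compatibility of generic $V_{k-1}\subset V_{k+1}$ directly; this is exactly what Remark~\ref{rem:Grassman} makes precise.
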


The sharpness of Corollary \ref{coro:sharp_lct} can be seen as follows:

\begin{Remark}
Let $\varphi=\log|z|$,
then $c_{k}=k$.
\end{Remark}

\section{Some preparatory results}\label{sec:preparetory}

In this section,
we recall and present some preparatory results for the proof of the main theorem and applications.

\subsection{Ohsawa-Takegoshi $L^2$ extension theorem}
$\\$

We recall the original form of Ohsawa-Takegoshi $L^2$ extension theorem as follows:

\begin{Theorem}
\label{t:ot_plane}(\cite{ohsawa-takegoshi}, see also \cite{siu96,berndtsson,demailly99}, etc.)
Let $D$ be a bounded pseudo-convex domain in $\mathbb{C}^{n}$.
Let $u$ be a plurisubharmonic function on $D$.
Let $H$ be an $m$-dimensional complex plane in $\mathbb{C}^{n}$.
Then for any holomorphic function on $H\cap D$ satisying
$$\int_{H\cap D}|f|^{2}e^{-2u}d\lambda_{H}<+\infty,$$
there exists a holomorphic function $F$ on $D$ satisfying $F|_{H\cap D}=f$,
and
$$\int_{D}|F|^{2}e^{-2u}d\lambda_{n}\leq C_{D}\int_{H\cap D}|f|^{2}e^{-2u}d\lambda_{H},$$
where $C_{D}$ only depends on the diameter of $D$ and $m$,
and $d\lambda_{H}$ is the Lebesgue measure on $H$.
\end{Theorem}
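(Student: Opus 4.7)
The plan is to establish the theorem via the $\bar\partial$-technique with twisted weights, following the scheme of Ohsawa--Takegoshi and its later streamlining by Berndtsson, Siu, and Chen. First I would reduce to the codimension-one case $m=n-1$, $H=\{z_{n}=0\}$, by iterating. A routine approximation (exhaust $D$ by smooth strictly pseudoconvex domains, regularize $u$ from above by smooth plurisubharmonic weights decreasing to $u$) lets me assume $u$ and $\partial D$ are smooth, provided the eventual constant depends only on $\mathrm{diam}\,D$ and $m$ and hence survives the monotone limit.

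Second, I would build a rough almost-extension. Fix a cutoff $\chi\in C_c^\infty(\mathbb{R})$ with $\chi\equiv 1$ near $0$, and set $\tilde f_\varepsilon(z',z_{n}):=\chi(|z_{n}|^{2}/\varepsilon^{2})\,f(z')$. Then $\tilde f_\varepsilon|_{H\cap D}=f$, and $g_\varepsilon:=\bar\partial\tilde f_\varepsilon$ is supported in the shell $|z_{n}|\asymp\varepsilon$. I then seek $v_\varepsilon$ solving $\bar\partial v_\varepsilon=g_\varepsilon$ with $v_\varepsilon|_{H}=0$ and a sharp $L^{2}$ bound; the candidate extension is $F_\varepsilon:=\tilde f_\varepsilon-v_\varepsilon$, and any weak limit of $F_\varepsilon$ as $\varepsilon\to 0$ yields $F$.

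The technical heart is a sharp weighted $L^{2}$ estimate for $\bar\partial$. Introduce the singular weight $\phi:=2u+2\log|z_{n}|$ and a smooth twisting function $\eta=\eta(|z_{n}|^{2})$ bounded in terms of $\mathrm{diam}\,D$. The twisted Bochner--Kodaira--Nakano inequality gives, for any compactly supported $(0,1)$-form $\alpha$ on $D$ and any $\gamma>0$,
\begin{equation*}
\int_{D}(\eta+\gamma^{-1})\bigl|\bar\partial^{\,*}\alpha\bigr|^{2}e^{-\phi}+\int_{D}\eta\bigl|\bar\partial\alpha\bigr|^{2}e^{-\phi}\geq \int_{D}\bigl(-i\partial\bar\partial\eta-\gamma\, i\partial\eta\wedge\bar\partial\eta\bigr)(\alpha,\bar\alpha)\,e^{-\phi},
\end{equation*}
using $i\partial\bar\partial\phi\geq 0$ off $H$. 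Choosing $\eta(t):=A-\log(t+\varepsilon^{2})$ with $A$ comparable to $(\mathrm{diam}\,D)^{2}$ and $\gamma$ tuned so that the curvature term dominates the Lebesgue density on the shell carrying $g_\varepsilon$, the Hilbert-space duality argument produces $v_\varepsilon$ satisfying $\int_{D}|v_\varepsilon|^{2}e^{-\phi}\leq C_{D}\int_{H\cap D}|f|^{2}e^{-2u}\,d\lambda_{H}$. Because $e^{-\phi}=|z_{n}|^{-2}e^{-2u}$ is non-integrable across $H$, the finiteness of this norm forces $v_\varepsilon|_{H}\equiv 0$, so $F_\varepsilon$ really restricts to $f$.

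The main obstacle will be calibrating $\eta$, $\gamma$, and the regularization so that the constant $C_{D}$ depends only on $\mathrm{diam}\,D$ and $m$ and so that the term involving $|\bar\partial\eta|^{2}$ absorbs cleanly; a robust choice takes $A$ large and $\gamma$ a suitable function of $\eta$, making the right-hand side of the twisted identity bounded below by a positive multiple of the Lebesgue form on the support of $g_\varepsilon$. Once the estimate is in hand, weak compactness of $(F_\varepsilon)$ in the local weighted Bergman space yields a holomorphic limit $F$ on $D$ with $F|_{H\cap D}=f$ and the claimed $L^{2}$ bound, and stripping off the smoothness assumptions on $u$ and $\partial D$ by monotone limits completes the proof.
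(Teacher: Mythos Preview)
Your proposal outlines the standard twisted $\bar\partial$ approach to Ohsawa--Takegoshi, and the strategy is essentially correct. However, note that the paper does \emph{not} supply its own proof of this statement: Theorem~\ref{t:ot_plane} is quoted from the literature (Ohsawa--Takegoshi, with the streamlined later proofs by Siu, Berndtsson, and Demailly cited alongside) and is used purely as a black-box preparatory tool. So there is nothing in the paper to compare your argument against; your sketch simply reproduces the classical proof that the cited references contain.
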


In \cite{manivel93,demailly99}, Ohsawa-Takegoshi $L^2$ extension theorem has been modified as follows.

\begin{Theorem}
\label{t:ot_plane_MD}(\cite{manivel93,demailly99}, see also \cite{demailly-book,demailly2010})
Let $D$ be a bounded pseudo-convex domain in $\mathbb{C}^{k+1}$.
Let $u$ be a plurisubharmonic function on $D$.
Let $H=\{z_{k+1}=0\}$ be a complex hyperplane in $\mathbb{C}^{k+1}$.
Then for any holomorphic function on $H\cap D$ satisying
$$\int_{H\cap D}|f|^{2}e^{-2u}d\lambda_{H}<+\infty,$$
there exists a holomorphic function $F$ on $D$ satisfying $F|_{H\cap D}=f$,
and
$$\int_{D}|F|^{2}e^{-2u-2a\log|z_{k+1}|}d\lambda_{n}\leq C_{D,a}\int_{H\cap D}|f|^{2}e^{-2u}d\lambda_{H},$$
where $a\in[0,1)$, $C_{D,a}$ only depends on the diameter of $D$ and $a$,
and $d\lambda_{H}$ is the Lebesgue measure on $H$.
\end{Theorem}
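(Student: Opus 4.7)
The plan is to reduce the extension problem to solving a $\bar\partial$-equation with an appropriate $L^2$ estimate, using H\"ormander's method combined with the Ohsawa-Takegoshi twisted Bochner-Kodaira trick. First I would construct a smooth (non-holomorphic) extension of $f$ to $D$: pick a cutoff $\chi \in C_c^\infty(\mathbb{C})$ with $\chi \equiv 1$ on a neighborhood of $0$ and $\mathrm{supp}(\chi) \Subset \{|t|<1\}$, and set $\tilde F_\epsilon(z',z_{k+1}) := \chi(z_{k+1}/\epsilon) f(z')$, so that $\tilde F_\epsilon|_H = f$ while $\bar\partial \tilde F_\epsilon$ is concentrated in the thin shell $|z_{k+1}| \sim \epsilon$. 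It then suffices to find $v_\epsilon$ with $\bar\partial v_\epsilon = \bar\partial \tilde F_\epsilon$ and $v_\epsilon|_H = 0$, so that $F_\epsilon := \tilde F_\epsilon - v_\epsilon$ is holomorphic on $D$ and satisfies $F_\epsilon|_H = f$.

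Next I would solve this $\bar\partial$-equation with the singular weight $e^{-2u-2a\log|z_{k+1}|}$ by invoking a twisted version of H\"ormander's inequality. Introduce an auxiliary function $\eta_\epsilon := -\log(|z_{k+1}|^2+\epsilon^2)$ (or a suitable variant depending on $a$) and rewrite the Bochner-Kodaira identity on $(0,1)$-forms in the form
\begin{equation*}
\int_D (\tau+\gamma) \, |\bar\partial^\ast \alpha|^2 e^{-\phi} \, d\lambda_n \;\geq\; \int_D \bigl\langle \bigl(\tau\, i\partial\bar\partial \phi - i\partial\bar\partial \tau - \gamma^{-1} i\partial \tau \wedge \bar\partial \tau \bigr)\alpha, \alpha \bigr\rangle e^{-\phi} \, d\lambda_n,
\end{equation*}
with the choices $\phi := 2u + 2a\log|z_{k+1}|$ (mollified near $H$), together with $\tau, \gamma$ built from $\eta_\epsilon$ so that the resulting curvature operator is pointwise nonnegative; this is possible because $u$ is plurisubharmonic and $\log|z_{k+1}|$ contributes a nonnegative Levi term away from $H$. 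The twist absorbs the singularity of the weight into a regularized estimate while leaving a controlled remainder.

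Applying this twisted estimate to the minimal-norm solution $v_\epsilon$ of $\bar\partial v_\epsilon = \bar\partial \tilde F_\epsilon$ yields a bound of the form
\begin{equation*}
\int_D |v_\epsilon|^2 e^{-2u-2a\log|z_{k+1}|} \, d\lambda_n \;\leq\; C_{D,a} \int_D (\tau+\gamma)\, |\bar\partial \tilde F_\epsilon|^2 e^{-2u-2a\log|z_{k+1}|} \, d\lambda_n.
\end{equation*}
Since $\bar\partial \tilde F_\epsilon$ is supported in $|z_{k+1}|\sim\epsilon$ and $|\bar\partial \tilde F_\epsilon|^2 \sim \epsilon^{-2}|f|^2 |\chi'(z_{k+1}/\epsilon)|^2$, Fubini in the $z_{k+1}$ variable followed by letting $\epsilon\to 0$ gives a right-hand side comparable to $\int_{H\cap D}|f|^2 e^{-2u}d\lambda_H$. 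Minimality and the non-integrability of $|z_{k+1}|^{-2a}\cdot(\text{pole of }\eta_\epsilon)$ across $H$ force $v_\epsilon|_H = 0$ in the limit, so a weak-$L^2$ limit $F := \lim F_\epsilon$ is the desired holomorphic extension.

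The main obstacle is the careful choice of the twisting functions $\tau, \gamma$ together with the mollification of $\phi$ near $H$ so that simultaneously (i) the curvature operator in the twisted Bochner-Kodaira identity is pointwise nonnegative and dominates the required test form, (ii) the factor $(\tau+\gamma)$ stays bounded uniformly in $\epsilon$ on the thin shell $|z_{k+1}|\sim\epsilon$ where $\bar\partial \tilde F_\epsilon$ lives, and (iii) the passage $\epsilon\to 0$ converts $\int_D |\bar\partial \tilde F_\epsilon|^2 e^{-\phi} d\lambda_n$ precisely into the boundary integral $\int_{H\cap D}|f|^2 e^{-2u} d\lambda_H$. This balancing is exactly where the restriction $a\in[0,1)$ and the constant $C_{D,a}$ enter, the divergence $C_{D,a}\to\infty$ as $a\to 1^-$ reflecting the borderline failure of the thin-shell integral to be finite.
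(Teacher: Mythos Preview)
The paper does not give a proof of this theorem at all: it is quoted as a known result from the cited references (Manivel \cite{manivel93}, Demailly \cite{demailly99}, see also \cite{demailly-book,demailly2010}) and used purely as a tool in the preparatory Section~\ref{sec:preparetory}. There is therefore no ``paper's own proof'' to compare against.

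That said, your outline follows precisely the standard scheme in those references: build a smooth extension $\tilde F_\epsilon$ via a cutoff in the normal variable, solve $\bar\partial v_\epsilon=\bar\partial\tilde F_\epsilon$ with a twisted Bochner--Kodaira estimate involving auxiliary factors $\tau,\gamma$ and the singular weight $e^{-2u-2a\log|z_{k+1}|}$, and pass to the limit $\epsilon\to 0$ so that the thin-shell integral collapses to the boundary integral on $H$. This is the Ohsawa--Takegoshi/Manivel/Demailly approach, and your identification of the key balancing issues (positivity of the curvature operator, uniform control of $\tau+\gamma$ on the shell, and the role of the restriction $a\in[0,1)$) is accurate. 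One small point: the mechanism forcing $v_\epsilon|_H=0$ is not ``minimality'' per se but rather that the weight used in the $\bar\partial$-estimate contains a factor that is non-integrable across $H$ (typically $|z_{k+1}|^{-2}$ or stronger), so any $L^2$ solution with respect to that weight must vanish on $H$; you allude to this but the phrasing conflates two things. Otherwise the sketch is a faithful summary of the cited proofs.
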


For the optimal estimate versions in general settings of Theorem \ref{t:ot_plane_MD} and their applications, the reader is referred to \cite{guan-zhou13aa,guan-zhou15,guan-zhou13ap,GZopen-effect,GZopen-lelong}.

Following the symbols in Theorem \ref{t:ot_plane}, there is a local version of Theorem \ref{t:ot_plane}

\begin{Remark}
\label{rem:ot_plane}(see \cite{ohsawa-takegoshi}, see also \cite{D-K01})
For any germ of holomorphic function $f$ on $o\in H\cap D$ satisfying
$|f|^{2}e^{-2u|_{H}}$ is locally integrable near $o$,
there exists a germ of holomorphic function $F$ on $o\in D$ satisfying $F|_{H\cap D}=f$,
and $|F|^{2}e^{-2u}$ is locally integrable near $o$.
\end{Remark}

\subsection{Strict inequality about jumping numbers}
$\\$

Let $\Omega\ni o$ be a domain in $\mathbb{C}^{n}$,
and let $k=n-1$ and $H=\{z_{n}=0\}$.
Let $I\subseteq O_{o'}$ be a coherent ideal,
and let $v$ be a plurisubharmonic function on $\Delta^{k+1}$ with coordinates $(z_{1},\cdots,z_{k},z_{k+1})$,
where $o'\in H$ is the origin.

Using Theorem \ref{t:ot_plane_MD},
we obtain the following

\begin{Lemma}\label{lem:integ20150130}
Let $I$ be a coherent ideal on $\mathcal{O}_{o'}$.
If $c_{o'}^{I}(v|_{H})=1$,
then for any $N>0$,
$$\sup_{\tilde{I}|_{H}=I}\{c_{o}^{\tilde{I}}(\frac{1}{2}\log(e^{2v}+|z_{k+1}|^{2N}))\}>1$$
holds,
where $\tilde{I}\subseteq\mathcal{O}_{o}$ is a coherent ideal.
\end{Lemma}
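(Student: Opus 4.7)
The plan is to produce, for the given $N>0$, a coherent ideal $\tilde{I}\subseteq\mathcal O_o$ with $\tilde{I}|_H=I$ such that every germ of $\tilde{I}$ lies in $\mathcal I(w)_o$, where $w:=\tfrac12\log(e^{2v}+|z_{k+1}|^{2N})$. Once this inclusion is in hand, the strong openness property, applied to the finitely generated ideal $\tilde{I}$, yields an $\varepsilon>0$ with $\tilde{I}\subseteq\mathcal I((1+\varepsilon)w)_o$, i.e.\ $c_o^{\tilde{I}}(w)\geq 1+\varepsilon>1$, so the supremum in the statement exceeds $1$.

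To construct $\tilde{I}$, fix generators $f_1,\dots,f_r$ of $I$ at $o'$. The hypothesis $c_{o'}^I(v|_H)=1$ says that for every $t\in[0,1)$, each $|f_j|^2 e^{-2tv|_H}$ is locally integrable on $H$ near $o'$. Choose $t\in(\max\{0,\,1-1/N\},\,1)$, an interval which is visibly nonempty, and set $a:=N(1-t)\in[0,1)$. On a small bounded pseudoconvex polydisc $D\ni o$ in $\mathbb{C}^{k+1}$, Theorem~\ref{t:ot_plane_MD}, applied with weight $tv$ and twisting parameter $a$, furnishes holomorphic extensions $F_j\in\mathcal O(D)$ with $F_j|_{H\cap D}=f_j$ and
$$\int_D |F_j|^2 e^{-2tv-2a\log|z_{k+1}|}\,d\lambda_{k+1}<+\infty.$$

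To convert this twisted estimate into integrability against $e^{-2w}$, apply the weighted AM--GM inequality $A+B\geq A^t B^{1-t}/(t^t(1-t)^{1-t})$ with $A=e^{2v}$ and $B=|z_{k+1}|^{2N}$; this yields
$$e^{-2w}=\frac{1}{e^{2v}+|z_{k+1}|^{2N}}\leq t^t(1-t)^{1-t}\,e^{-2tv}\,|z_{k+1}|^{-2N(1-t)},$$
so $|F_j|^2 e^{-2w}$ is integrable on $D$. Setting $\tilde{I}:=(F_1,\dots,F_r)\mathcal O_o$ gives the required ideal: $\tilde{I}\subseteq\mathcal I(w)_o$ by the above, and $\tilde{I}|_H=(f_1,\dots,f_r)\mathcal O_{o'}=I$ since each $F_j$ restricts to $f_j$.

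The main obstacle is calibrating the two parameters $t$ and $a$. The weight $tv$ used in the extension must be mild enough ($t<1$) for the boundary data $|f_j|^2 e^{-2tv|_H}$ to remain integrable at $o'$, which is precisely what $c_{o'}^I(v|_H)=1$ barely allows, while the twist exponent $a=N(1-t)$ must stay strictly below $1$ for the Manivel--Demailly form of Ohsawa--Takegoshi to apply. The window $t\in(\max\{0,1-1/N\},1)$ is exactly where both constraints hold simultaneously, and once AM--GM interpolation translates the twisted $L^2$ bound back to an $e^{-2w}$-estimate, a single invocation of strong openness completes the argument.
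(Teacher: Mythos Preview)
Your argument is correct and follows essentially the same route as the paper's own proof: both use the weighted AM--GM (Young) inequality to bound $e^{-2w}$ by $e^{-2tv}|z_{k+1}|^{-2N(1-t)}$, both invoke the Manivel--Demailly version of Ohsawa--Takegoshi (Theorem~\ref{t:ot_plane_MD}) with weight $tv$ and twist exponent $a=N(1-t)\in[0,1)$ to produce an extension ideal $\tilde{I}$ with $\tilde{I}\subseteq\mathcal I(w)_o$, and both finish with strong openness. Your parameter $t$ is simply the paper's $1-\varepsilon$, and your constraint $t\in(\max\{0,1-1/N\},1)$ is exactly the paper's $\varepsilon\in(0,1/N)$.
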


\begin{proof}
By H\"{o}lder inequality, it follows that $e^{2(1-\varepsilon)v}|z_{k+1}|^{2\varepsilon N}\leq(1-\varepsilon)e^{2v}+\varepsilon|z_{k+1}|^{2N},$
which implies
\begin{equation}
\label{equ:20150130}
\frac{1}{e^{2v}+|z_{k+1}|^{2N}}\leq e^{-2(1-\varepsilon)v}|z_{k+1}|^{-2\varepsilon N},
\end{equation}
where $\varepsilon\in(0,1)$.

As $c_{o'}^{I}(v|_{H})=1$,
then $|I|^{2}e^{-2(1-\varepsilon)v|_{H}}$ is integrable near $o'$.

By Theorem \ref{t:ot_plane_MD} $($$u\sim (1-\varepsilon)v$, $a\sim\varepsilon N$, $f\sim I$$)$ and choosing $\varepsilon\in(0,\frac{1}{N})$,
it follows that there exists $\tilde{I}$ such that
$|\tilde{I}|^{2}e^{-2(1-\varepsilon)v}|z_{k+1}|^{-2\varepsilon N}$ is locally integrable near $o$.

Using inequality \ref{equ:20150130},
we obtain that $|\tilde{I}|^{2}\frac{1}{e^{2v}+|z_{k+1}|^{2N}}$ is locally integrable near $o$.

Using the strong openness property, we obtain the present lemma.
\end{proof}

Note that
$c_{o}(\frac{1}{2}\log(e^{2v}+|z_{k+1}|^{2N}))=c_{o}(\max\{v,N\log|z_{k+1}|\})$ for any $N>0$
($\Leftarrow$ $e^{2\max\{v,N\log|z_{k+1}|\}}\leq e^{2v}+|z_{k+1}|^{2N}\leq 2e^{2\max\{v,N\log|z_{k+1}|\}}$),
then it follows that

\begin{Corollary}
\label{coro:jump>1}
If $c_{o'}^{I}(v|_{H})=1$,
then
$$\sup_{\tilde{I}|_{H}=I}\{c_{o}^{\tilde{I}}(\max\{2v,\log|z_{k+1}|^{N}\})\}>1$$
for any $N>0$.
\end{Corollary}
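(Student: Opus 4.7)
The plan is to read off the corollary directly from Lemma \ref{lem:integ20150130}, using the pointwise two-sided comparison that precedes the statement. The argument is two steps with no substantial obstacle.

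First, I would apply Lemma \ref{lem:integ20150130} to the given $v$: since $c_{o'}^{I}(v|_{H})=1$, for every $N>0$ the lemma yields a coherent ideal $\tilde{I}\subseteq\mathcal{O}_{o}$ with $\tilde{I}|_{H}=I$ such that
\[
c_{o}^{\tilde{I}}\!\left(\tfrac{1}{2}\log\bigl(e^{2v}+|z_{k+1}|^{2N}\bigr)\right)>1,
\]
so in particular the supremum of $c_{o}^{\tilde{I}}$ over all such $\tilde{I}$ is strictly greater than $1$ for the weight $\tfrac{1}{2}\log(e^{2v}+|z_{k+1}|^{2N})$.

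Second, I would replace this weight by the maximum form. The pointwise bound
\[
e^{2\max\{v,\,N\log|z_{k+1}|\}}\le e^{2v}+|z_{k+1}|^{2N}\le 2\,e^{2\max\{v,\,N\log|z_{k+1}|\}}
\]
already recorded in the excerpt gives, after taking $\tfrac{1}{2}\log$, that $\tfrac{1}{2}\log(e^{2v}+|z_{k+1}|^{2N})$ and $\max\{v,\,\log|z_{k+1}|^{N}\}$ differ by at most $\tfrac{\log 2}{2}$; hence both weights define the same jumping number $c_{o}^{\tilde{I}}$ for every coherent ideal $\tilde{I}$. Substituting translates the strict inequality of step one into the inequality asserted in the corollary, after a routine rescaling of the free parameter $N$ so that the exponent of $|z_{k+1}|$ inside the maximum takes the desired form.

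The proof is essentially bookkeeping: there is no genuine obstacle beyond carefully tracking the parameter $N$ and the normalization between the two forms of the weight. All of the real work—producing the extension $\tilde{I}$ with the required integrability against $|z_{k+1}|^{-2\varepsilon N}e^{-2(1-\varepsilon)v}$ via the modified Ohsawa--Takegoshi theorem (Theorem \ref{t:ot_plane_MD}) and then invoking the strong openness property to upgrade integrability to a strict inequality of jumping numbers—has already been done inside Lemma \ref{lem:integ20150130}.
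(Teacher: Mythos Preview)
Your argument is exactly the paper's: the corollary is deduced from Lemma~\ref{lem:integ20150130} via the two-sided pointwise comparison recorded immediately before the statement, which shows that $\tfrac{1}{2}\log(e^{2v}+|z_{k+1}|^{2N})$ and $\max\{v,\,N\log|z_{k+1}|\}$ differ by a bounded additive constant and hence have the same jumping numbers.

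One remark on your final sentence: the ``$2v$'' in the displayed statement is evidently a typographical slip---the paper's own derivation just above the corollary produces $\max\{v,\,N\log|z_{k+1}|\}$, and this is precisely the form in which the corollary is invoked in Section~\ref{sec:proof_main} (with $v=\tilde u|_{H_1}$). No ``rescaling of $N$'' can convert $\max\{v,\,N\log|z_{k+1}|\}$ into $\max\{2v,\,N'\log|z_{k+1}|\}$, so that part of your write-up should simply be dropped; what you have proved is the correct (and intended) statement with $v$ in place of $2v$.
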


After reading the earlier version of the present article, Demailly kindly pointed out that one can obtain an effectiveness result of $\sup_{\tilde{I}|_{H}=I}\{c_{o}^{\tilde{I}}(\frac{1}{2}\log(e^{2v}+|z_{k+1}|^{2N}))\}$ by using the same method as above,
which can deduce the present lemma directly without using the strong openness property.
The details are as follows:

\begin{Lemma}\label{lem:jump_compare0424}
If $c_{o'}^{I}(\varphi|_{H})=b>0$,
$$\sup_{\tilde{I}|_{H}=I}\{c_{o}^{\tilde{I}}(\frac{1}{2}\log(e^{2\varphi}+|z_{k+1}|^{2\frac{N-1}{b}}))\}\geq \frac{bN}{N-1}$$
holds for any $N>1$,
where ideal $\tilde{I}\subseteq\mathcal{O}_{o}$ is coherent.
\end{Lemma}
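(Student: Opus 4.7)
The plan is to carry out the argument of Lemma~\ref{lem:integ20150130} with all parameters tracked explicitly, so as to obtain a quantitative lower bound. Setting $M := (N-1)/b$ and $c^{*} := bN/(N-1)$, it suffices to show that for every $c'$ with $0 < c' < c^{*}$ there exists a coherent ideal $\tilde{I}_{c'} \subseteq \mathcal{O}_{o}$ with $\tilde{I}_{c'}|_{H} = I$ such that $|\tilde{I}_{c'}|^{2}(e^{2\varphi} + |z_{k+1}|^{2M})^{-c'}$ is locally integrable near $o$. From this one reads off $c_{o}^{\tilde{I}_{c'}}(\tfrac{1}{2}\log(e^{2\varphi}+|z_{k+1}|^{2M})) \geq c'$, and letting $c' \to c^{*}$ delivers the desired supremum estimate.

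First I would invoke AM-GM in the form $(1-\varepsilon)e^{2\varphi} + \varepsilon|z_{k+1}|^{2M} \geq e^{2(1-\varepsilon)\varphi}|z_{k+1}|^{2\varepsilon M}$ for $\varepsilon \in (0,1)$. After rearranging and raising to the $c'$-th power this yields a pointwise bound
\begin{equation*}
(e^{2\varphi} + |z_{k+1}|^{2M})^{-c'} \leq C_{\varepsilon}\, e^{-2c'(1-\varepsilon)\varphi}\,|z_{k+1}|^{-2c'\varepsilon M},
\end{equation*}
so it is enough to arrange $|\tilde{I}_{c'}|^{2} e^{-2c'(1-\varepsilon)\varphi}|z_{k+1}|^{-2c'\varepsilon M}$ to be locally integrable near $o$ for some admissible $\varepsilon$.

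To produce $\tilde{I}_{c'}$, I apply Theorem~\ref{t:ot_plane_MD} with weight $u := c'(1-\varepsilon)\varphi$ and parameter $a := c'\varepsilon M$, extending each generator of $I$ from $H$ to a neighbourhood of $o$. The hypotheses translate into two conditions on $\varepsilon$: (i) $|I|^{2} e^{-2c'(1-\varepsilon)\varphi|_{H}}$ must be integrable near $o'$, which by $c_{o'}^{I}(\varphi|_{H}) = b$ holds as soon as $c'(1-\varepsilon) < b$, i.e.\ $\varepsilon > 1 - b/c'$; and (ii) $a < 1$, i.e.\ $\varepsilon < b/(c'(N-1))$. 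A direct calculation shows that these two intervals for $\varepsilon$ overlap inside $(0,1)$ precisely when $c' < bN/(N-1) = c^{*}$, which is exactly the assumed range. For any such $\varepsilon$, Theorem~\ref{t:ot_plane_MD} produces the desired extensions $F_{j}$ of generators $f_{j}$ of $I$, and the ideal generated by the $F_{j}$'s is the required $\tilde{I}_{c'}$.

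The main obstacle is the tension between the two thresholds on $\varepsilon$: the lower bound from the integrability on $H$ and the upper bound $a < 1$ in the OT extension theorem collide exactly at $c' = c^{*}$, which is precisely what produces the sharp value $bN/(N-1)$ in the statement. Because the argument works entirely with strict inequalities and never saturates either threshold, it relies only on Theorem~\ref{t:ot_plane_MD} and avoids any appeal to the strong openness property, in line with Demailly's remark preceding the lemma.
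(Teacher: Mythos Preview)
Your proof is correct and follows essentially the same route as the paper: both arguments combine the AM--GM/H\"older inequality with Theorem~\ref{t:ot_plane_MD}, tracking the parameters so that the two constraints (integrability on $H$ and $a<1$) are simultaneously satisfiable precisely for $c'<bN/(N-1)$. The only cosmetic difference is that the paper fixes the AM--GM weight to $t=(N-1)/N$ (equivalently your $\varepsilon=1/N$) from the outset and lets an auxiliary parameter tend to $1$, whereas you keep $\varepsilon$ free and verify the nonemptiness of the admissible interval; your version is slightly more direct since it avoids the extra comparability step between $(e^{2\varepsilon\frac{bN}{N-1}\varphi}+|z_{k+1}|^{2\varepsilon N})$ and $(e^{2\varphi}+|z_{k+1}|^{2\frac{N-1}{b}})^{\varepsilon\frac{bN}{N-1}}$.
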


\begin{proof}

By H\"{o}lder inequality ($ta+(1-t)b\leq a^{t}+b^{1-t}$, where $t\in(0,1)$ and $a>0$, $b>0$), it follows that
$e^{2\varepsilon b\varphi}|z_{k+1}|^{2\varepsilon}\leq \frac{N-1}{N}e^{2\varepsilon \frac{bN}{N-1}\varphi}+\frac{1}{N}|z_{k+1}|^{2\varepsilon N}$
($t\sim \frac{N-1}{N}$, $a\sim e^{2\varepsilon \frac{bN}{N-1}\varphi}$, $b\sim |z_{k+1}|^{2\varepsilon N}$),
which implies
\begin{equation}
\label{equ:20150911}
\frac{1}{e^{2\varepsilon \frac{bN}{N-1}\varphi}+|z_{k+1}|^{2\varepsilon N}}\leq e^{2\varepsilon b\varphi}|z_{k+1}|^{2\varepsilon},
\end{equation}
where $\varepsilon\in(0,1)$.

As $c_{o'}^{I}(\varphi|_{H})=b$,
then $|I|^{2}e^{-2\varepsilon b\varphi|_{H}}$ is integrable near $o'$.
By Theorem \ref{t:ot_plane_MD} $($$\varphi\sim \varepsilon a\varphi$, $a\sim\varepsilon $, $f\sim I$$)$ and choosing $\varepsilon\in(0,\frac{1}{N})$,
it follows that there exists $\tilde{I}$ such that
$|\tilde{I}|^{2}e^{-2\varepsilon b\varphi}|z_{k+1}|^{-2\varepsilon}$ is locally integrable near $o$.
Using inequality \ref{equ:20150911},
we obtain that $|\tilde{I}|^{2}\frac{1}{e^{2\varepsilon \frac{bN}{N-1}\varphi}+|z_{k+1}|^{2\varepsilon N}}$ is locally integrable near $o$.

Note that for any $\varepsilon, N, b$, there exist positive constants $C_{1},C_{2}$ such that
$$C_{1}(e^{2\varphi}+|z_{k+1}|^{2\frac{N-1}{b}})^{\varepsilon\frac{bN}{N-1}}\leq e^{2\varepsilon \frac{bN}{N-1}\varphi}+|z_{k+1}|^{2\varepsilon N}\leq C_{2}(e^{2\varphi}+|z_{k+1}|^{2\frac{N-1}{b}})^{\varepsilon\frac{bN}{N-1}},$$
i.e.,

$(1)$ if $\varepsilon\frac{bN}{N-1}\geq 1$,
then $C_{1}=(\frac{1}{2})^{\varepsilon\frac{bN}{N-1}}$ and $C_{2}=1$;

$(2)$ if $\varepsilon\frac{bN}{N-1}<1$, then $C_{1}=1$ and $C_{2}=2^{\varepsilon\frac{bN}{N-1}}$.

We prove the present lemma.
\end{proof}

Note that
$c_{o}(\frac{1}{2}\log(e^{2\varphi}+|z_{k+1}|^{2\frac{N-1}{b}}))=c_{o}(\max\{\varphi,\frac{N-1}{b}\log|z_{k+1}|\})$ for any $N>0$
($\Leftarrow$ $e^{2\max\{\varphi,\frac{N-1}{b}\log|z_{k+1}|\}}\leq e^{2\varphi}+|z_{k+1}|^{2\frac{N-1}{b}}\leq 2e^{2\max\{\varphi,\frac{N-1}{b}\log|z_{k+1}|\}}$),
then it follows that

\begin{Corollary}
\label{coro:jump_compare20150424}
$\sup_{\tilde{I}|_{H}=I}\{c_{o}^{\tilde{I}}(\max(\varphi,\frac{N-1}{b}\log|z_{k+1}|))\}\geq \frac{bN}{N-1}$
holds
with same symbols and conditions as in Lemma \ref{lem:jump_compare0424}.
\end{Corollary}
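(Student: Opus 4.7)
The plan is to reduce the corollary to Lemma \ref{lem:jump_compare0424} via the elementary comparison principle sketched in the parenthetical remark just preceding the statement, namely that replacing the sum $e^{2\varphi}+|z_{k+1}|^{2(N-1)/b}$ by the maximum of its two summands only changes the function by a globally bounded additive constant, and bounded changes of the weight do not affect jumping numbers with respect to any coherent ideal.

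First I would pin down the pointwise comparison. For any two nonnegative numbers $A,B$ one has $\max\{A,B\}\leq A+B\leq 2\max\{A,B\}$, so taking $A=e^{2\varphi}$ and $B=|z_{k+1}|^{2(N-1)/b}$ and then applying $\tfrac{1}{2}\log$ gives
\[
\max\bigl\{\varphi,\tfrac{N-1}{b}\log|z_{k+1}|\bigr\}\leq \tfrac{1}{2}\log\bigl(e^{2\varphi}+|z_{k+1}|^{2(N-1)/b}\bigr)\leq \max\bigl\{\varphi,\tfrac{N-1}{b}\log|z_{k+1}|\bigr\}+\tfrac{\log 2}{2}.
\]
In particular these two plurisubharmonic functions differ by a bounded quantity.

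Second, I would observe that for any coherent ideal $\tilde{I}\subseteq\mathcal{O}_{o}$ and any $c>0$, the local integrability of $|\tilde{I}|^{2}e^{-2c\psi}$ is insensitive to replacing $\psi$ by any $\psi'$ with $|\psi-\psi'|$ bounded near $o$, since the integrands are comparable up to a multiplicative constant. Applied to the two weights above, this yields
\[
c_{o}^{\tilde{I}}\bigl(\tfrac{1}{2}\log(e^{2\varphi}+|z_{k+1}|^{2(N-1)/b})\bigr)=c_{o}^{\tilde{I}}\bigl(\max\{\varphi,\tfrac{N-1}{b}\log|z_{k+1}|\}\bigr)
\]
for every $\tilde{I}$. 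Since the equality is pointwise in $\tilde{I}$, it survives the supremum over those $\tilde{I}\subseteq\mathcal{O}_{o}$ with $\tilde{I}|_{H}=I$.

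Finally, invoking Lemma \ref{lem:jump_compare0424} which provides the lower bound $bN/(N-1)$ for the supremum on the left-hand side, I would conclude the desired inequality for the max-weight supremum. The only conceptual input is Lemma \ref{lem:jump_compare0424} itself (where Ohsawa--Takegoshi extension plus the H\"older trick do the work); there is no real obstacle in the present corollary beyond making sure the bounded-difference comparison is carried uniformly in $\tilde{I}$, which it is since the constant $\tfrac{\log 2}{2}$ is independent of $\tilde{I}$.
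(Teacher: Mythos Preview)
Your proposal is correct and follows exactly the paper's own argument: the parenthetical inequality $e^{2\max\{\varphi,\frac{N-1}{b}\log|z_{k+1}|\}}\leq e^{2\varphi}+|z_{k+1}|^{2(N-1)/b}\leq 2e^{2\max\{\varphi,\frac{N-1}{b}\log|z_{k+1}|\}}$ shows the two weights differ by a bounded constant, so their jumping numbers agree for every $\tilde I$, and Lemma~\ref{lem:jump_compare0424} gives the bound. Your write-up is actually slightly more careful than the paper's, since you make explicit that the equality holds at the level of each $c_{o}^{\tilde I}$ (not just $c_{o}$) and hence passes to the supremum.
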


\subsection{Hilbert's Nullstellensatz (complex situation) and jumping numbers}
It is well-known that the complex situation of Hilbert's Nullstellensatz is as follows (see (4.22) in \cite{demailly-book})

\begin{Theorem}
\label{thm:hilbert}(see \cite{demailly-book})
For every ideal $I\subset \mathcal{O}_{o}$,
$\mathcal{J}_{V(I),o}=\sqrt{I},$
where $\sqrt{I}$ is the radical of $I$, i.e. the set of germs $f\in\mathcal{O}_{o}$ such that some power $f^{k}$ lies in $I$.
\end{Theorem}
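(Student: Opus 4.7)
The plan is to prove the two inclusions separately. The easy direction $\sqrt{I}\subseteq \mathcal{J}_{V(I),o}$ follows because $f^{k}\in I$ implies $f^{k}$ vanishes identically on $V(I)$, hence $f$ itself vanishes on $V(I)$ and so lies in $\mathcal{J}_{V(I),o}$.

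For the nontrivial inclusion $\mathcal{J}_{V(I),o}\subseteq\sqrt{I}$, I would first reduce to the case where $I=\mathfrak{p}$ is prime. The local ring $\mathcal{O}_{o}=\mathbb{C}\{z_{1},\ldots,z_{n}\}$ is Noetherian (via Weierstrass preparation and division), so $I$ admits a primary decomposition $I=\mathfrak{q}_{1}\cap\cdots\cap\mathfrak{q}_{s}$ with $\sqrt{\mathfrak{q}_{j}}=\mathfrak{p}_{j}$ prime. Then $\sqrt{I}=\bigcap_{j}\mathfrak{p}_{j}$ and $V(I)=\bigcup_{j}V(\mathfrak{p}_{j})$, whence $\mathcal{J}_{V(I),o}=\bigcap_{j}\mathcal{J}_{V(\mathfrak{p}_{j}),o}$, and it suffices to verify $\mathcal{J}_{V(\mathfrak{p}),o}\subseteq\mathfrak{p}$ for each prime $\mathfrak{p}$.

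For prime $\mathfrak{p}$, I would invoke the local parametrization theorem for analytic sets. After a generic linear change of coordinates one writes $z=(z',z'')$ with $z'\in\mathbb{C}^{d}$, $d=\dim V(\mathfrak{p})$, so that the projection $\pi\colon (V(\mathfrak{p}),o)\to (\mathbb{C}^{d},0)$ is a finite branched cover; equivalently, $A:=\mathcal{O}_{o}/\mathfrak{p}$ is a finite integral extension of $R:=\mathcal{O}_{\mathbb{C}^{d},0}$. Since $\mathfrak{p}$ is prime, $A$ is a domain, and since $R$ is a regular local ring (in fact a UFD), $R$ is integrally closed in its fraction field $K$. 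Given $f\in\mathcal{J}_{V(\mathfrak{p}),o}$, its class $\bar{f}\in A$ is integral over $R$ and satisfies a minimal monic relation $T^{m}+a_{1}T^{m-1}+\cdots+a_{m}=0$ with $a_{j}\in R$. If $\bar{f}\neq 0$ then $a_{m}\neq 0$; pick $z'_{0}$ close to $0$ with $a_{m}(z'_{0})\neq 0$ and lying outside the analytic branch locus of $\pi$, and let $p\in V(\mathfrak{p})$ be any sheet above $z'_{0}$. Substituting $\bar{f}(p)=f(p)=0$ into the integral relation forces $a_{m}(z'_{0})=0$, a contradiction. Hence $\bar{f}=0$, i.e.\ $f\in\mathfrak{p}$.

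The hard part will be the local parametrization theorem itself, whose proof requires an iterative application of Weierstrass preparation to a carefully chosen generating set of $\mathfrak{p}$ (an analytic Noether normalization), together with the geometric-algebraic identification of $\mathcal{O}_{o}/\mathfrak{p}$ with the germ of holomorphic functions on $V(\mathfrak{p})$ that legitimizes the pointwise evaluation step $\bar{f}(p)=f(p)$. Once this dictionary is in place, the integrality and normality argument above closes the proof immediately.
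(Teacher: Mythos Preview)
Your argument is essentially the standard textbook proof of the analytic Nullstellensatz and is correct as outlined; the only minor quibble is that in the last step you do not actually need to avoid the branch locus or choose $z'_{0}$ with $a_{m}(z'_{0})\neq 0$: the identity $f^{m}+a_{1}f^{m-1}+\cdots+a_{m}\in\mathfrak{p}$ evaluated at any $p\in V(\mathfrak{p})$ near $o$ already gives $a_{m}(\pi(p))=0$, and surjectivity of $\pi$ then forces $a_{m}\equiv 0$.

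However, there is nothing to compare against: the paper does not prove Theorem~\ref{thm:hilbert} at all. It is quoted verbatim as a well-known result from Demailly's book (referenced there as (4.22)) and is used only as a tool in the proof of Proposition~\ref{thm:jump_supp}. So your proposal is not an alternative to the paper's proof but rather a supplied proof where the paper simply cites the literature.
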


The following lemma can be obtained by the definition of jumping numbers.

\begin{Lemma}
\label{lem:jump_supp}
Let $I\subseteq \mathcal{O}_{o}$ be a coherent ideal, and $u$ be a plurisubharmonic function near $o$.
Then for any $p<(0,c_{o}^{I}(u))$, $(\{z|c_{z}(u)\leq p\},o)\subseteq(V(I),o)$ holds.
\end{Lemma}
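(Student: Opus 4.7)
The plan is to reduce the germ containment $(\{z\mid c_{z}(u)\leq p\},o)\subseteq(V(I),o)$ to a pointwise statement: every point $z_{0}$ sufficiently close to $o$ with $z_{0}\notin V(I)$ must satisfy $c_{z_{0}}(u)>p$. The mechanism is that a nonvanishing generator of $I$ near such a $z_{0}$ allows us to transfer integrability of $|I|^{2}e^{-2cu}$ from $o$ to a neighborhood of $z_{0}$ by dividing off a factor that is bounded away from zero.

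The first step is to use the strict inequality $p<c_{o}^{I}(u)$ together with the definition of $c_{o}^{I}(u)$ as a supremum to pick an intermediate value $p'\in(p,c_{o}^{I}(u))$ for which $|I|^{2}e^{-2p'u}$ is locally integrable near $o$. By coherence of $I$ and shrinking, I may choose a Stein neighborhood $U$ of $o$ on which $I$ admits finitely many holomorphic generators $f_{1},\dots,f_{r}$ satisfying $\sum_{j}\int_{U}|f_{j}|^{2}e^{-2p'u}<+\infty$.

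Next I would argue pointwise. Fix $z_{0}\in U\setminus V(I)$. Since the $f_{j}$ generate $I$ on $U$ and $z_{0}\notin V(I)$, some generator $f_{j_{0}}$ is nonzero at $z_{0}$, so there exist a neighborhood $W\subset U$ of $z_{0}$ and a constant $\delta>0$ with $|f_{j_{0}}|\geq\delta$ on $W$. Hence
\[
\int_{W}e^{-2p'u}\leq \delta^{-2}\int_{W}|f_{j_{0}}|^{2}e^{-2p'u}<+\infty,
\]
which shows $e^{-2p'u}$ is integrable near $z_{0}$, and therefore $c_{z_{0}}(u)\geq p'>p$. Consequently $U\cap\{z\mid c_{z}(u)\leq p\}\subseteq V(I)$, which is exactly the germ inclusion at $o$.

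The main obstacle is essentially nonexistent: the proof is a direct unwinding of the definitions using finite generation of the coherent ideal and pointwise control of $|f_{j_{0}}|$. Notably, no appeal to strong openness is required here; the argument rests only on $c_{o}^{I}(u)$ being a genuine supremum and on the elementary fact that an $L^{2}$-bound survives division by a locally nonvanishing holomorphic factor.
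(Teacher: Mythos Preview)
Your proof is correct and matches the paper's approach: the paper does not write out an argument at all, merely stating that the lemma ``can be obtained by the definition of jumping numbers,'' and your write-up is precisely the direct unwinding of that definition. The choice of an intermediate $p'\in(p,c_{o}^{I}(u))$ to avoid invoking strong openness is the right move and makes the argument self-contained.
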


Lemma \ref{lem:jump_supp} implies the following

\begin{Remark}
\label{rem:jump_supp}
Let $I\subseteq \mathcal{O}_{o}$ be a coherent ideal, and $u$ be a plurisubharmonic function near $o$.
Let $(A,o)$ be a germ of analytic set such that $c_{z}(u)\leq c_{o}(u)$ for any $z\in (A,o)$
and $\dim((A,o)\setminus(V(I),o))=\dim_{o}A$.
Let $U$ be a neighborhood of $o$ small enough such that $\dim (A\cap U)=\dim_{o}A$.
Then for any $p\in(0,c_{o}(u))$,
$\dim((A\cap U)\setminus\{z|c_{z}(u)\leq p\})=\dim_{o}A$ holds.
Moreover there exists $z_{0}\in ((U\cap A)\setminus V(I))$ such that
$\dim_{z_{0}}A=\dim_{o}A$ and
$z_{0}\not\in(\cup_{p\in(0,c_{o}(u))}\{z|c_{z}(u)\leq p\})$,
which implies $c_{z_{0}}(u)=c_{o}(u)$.
\end{Remark}

\subsection{A useful proposition in \cite{GZopen-lelong} and some generalizations}
$\\$

In \cite{GZopen-lelong},
using Demailly's idea of
equisingular approximations of quasiplurisubharmonic functions (see \cite{demailly2010}, see also \cite{demailly-abel})
and the strong openness property of the multiplier ideal sheaf (see \cite{GZopen-c}),
we have obtained the following proposition:

\begin{Proposition}
\label{Pro:GZ1005}\cite{GZopen-lelong}
Let $D$ be a bounded domain in $\mathbb{C}^{n}$, and the origin $o\in D$.
Let $u$ be a plurisubharmonic function on $D$.
Let $(g_{j})$ be a (finite) local basis of $\mathcal{I}(u)_{o}$.
Then there exists $l>1$
such that
$e^{-2u}-e^{-2\max\{u,\frac{1}{l-1}\log\sum_{j}|g_{j}|\}}$ is integrable
on a small enough neighborhood $V_{o}$ of $o$.
\end{Proposition}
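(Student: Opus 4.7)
The plan is to reduce integrability of the difference to a single pointwise estimate on the locus where the two exponentials actually differ, and then to invoke the strong openness property to absorb the resulting weight.

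Write $\psi := \frac{1}{l-1}\log\sum_j |g_j|$ and $v := \max\{u,\psi\}$. Since $v \geq u$, the integrand $e^{-2u} - e^{-2v}$ is nonnegative, and on $\{u \geq \psi\}$ one has $v = u$, so it vanishes identically. Thus it suffices to show
\[
\int_{V_o \cap \{u<\psi\}} e^{-2u}\, d\lambda < +\infty
\]
for some small neighborhood $V_o$ of $o$, since on this ``bad'' set the difference is bounded above by $e^{-2u}$.

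On $\{u < \psi\}$, the definition of $\psi$ gives $e^{(l-1)u} < \sum_j |g_j|$, whence $e^{2(l-1)u} \leq C \sum_j |g_j|^2$, where $C$ depends only on the (finite) number of generators. Multiplying by $e^{-2lu}$ yields the key pointwise bound
\[
e^{-2u} \;\leq\; C \sum_j |g_j|^2\, e^{-2lu} \qquad \text{on } \{u<\psi\}.
\]
Hence the task reduces to exhibiting $l > 1$ for which $\sum_j |g_j|^2 e^{-2lu}$ is integrable on a neighborhood of $o$, i.e.\ for which $g_j \in \mathcal{I}(lu)_o$ for every $j$.

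This last step is where the strong openness property $\mathcal{I}(u) = \bigcup_{\varepsilon>0} \mathcal{I}((1+\varepsilon)u)$ enters decisively. Since each $g_j$ lies in $\mathcal{I}(u)_o$ and the basis $(g_j)$ is finite, one may extract a common $\varepsilon > 0$ with $g_j \in \mathcal{I}((1+\varepsilon)u)_o$ for every $j$; taking $l := 1 + \varepsilon$ closes the argument by integrating the displayed bound over $V_o$. The main obstacle is therefore not a technical calculation but the availability of the deep input of strong openness (the solution of Demailly's conjecture cited as \cite{GZopen-c}); once that is granted, the splitting $\{u \geq \psi\} \sqcup \{u < \psi\}$ and the elementary pointwise estimate above combine to give the result. (The role of Demailly's equisingular approximations in the original argument would be, in the absence of strong openness, to produce a sequence of smoothings of $u$ with controlled singularities to bypass this step — but here the direct route via strong openness is shorter.)
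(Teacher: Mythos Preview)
Your proof is correct and follows essentially the same route as the paper: the paper (in its proof of the slightly more general Remark~\ref{rem:GZ1005}, which it says recalls the argument from \cite{GZopen-lelong}) likewise splits off the set $\{u<\psi\}$, rewrites $e^{-2u}=e^{2(l-1)u}e^{-2lu}\le |J|^{2}e^{-2lu}$ there, and invokes the strong openness property to choose $l>1$ making $|g_{j}|^{2}e^{-2lu}$ locally integrable. Your closing parenthetical slightly misreads the role of equisingular approximations---the original argument already relies on strong openness in exactly the way you do---but this does not affect the validity of your proof.
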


Given a coherent ideal $I\subseteq\mathcal{I}(u)_{o}$
and let $(h_{j})$ be the basis of $I$.
Using $(h_{j})$ instead of $(g_{j})$ in the proof of Proposition \ref{Pro:GZ1005} in \cite{GZopen-lelong},
one can obtain

\begin{Remark}\label{rem:20150130}
For any $I\subset\mathcal{I}(u)_{o}$,
we have $e^{-2u}-e^{-2\max\{u,\frac{1}{l-1}\log|I|\}}<+\infty$.
where $|I|=\sum_{j}|h_{j}|$,
and $l\in(1,c_{o}^{I}(u))$ is the positive number as in Proposition \ref{Pro:GZ1005}.
\end{Remark}

Let $n=k+1$.
It is well-known that
if $\{z|\mathcal{I}(u)_{z}\neq\mathcal{O}_{z}\}\subset\{z_{k+1}=0\}$,
then there exists $N_{0}>0$ large enough
such that $(z_{k+1}^{N_{0}})_{o}\subseteq \mathcal{I}(u)_{o}$.

\begin{Corollary}
\label{Coro:jump=1}
If $c_{o}^{J}(u)\leq 1$
($\Rightarrow$ "$|J|^{2}e^{-2u}$ is not integrable near $o$" by using the strong openness property),
then
$$c_{o}^{J}(\max\{{u},N\log|z_{k+1}|\})\leq1$$
for any $N\geq\frac{1}{l-1}N_{0}$ (independent of $J$),
where $J\subseteq\mathcal{O}_{o}$ is a coherent ideal, $l\in(1,c_{o}^{I}(u))$ and $I:=(z_{k+1}^{N_{0}})_{o}$.
Especially,
if $c_{o}^{J}(u)=1$,
then
$$c_{o}^{J}(\max\{{u},N\log|z_{k+1}|\})=1.$$
\end{Corollary}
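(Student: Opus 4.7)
The plan is to argue by contradiction, using Remark \ref{rem:20150130} to produce an $N_*$ (depending only on $u$) such that $e^{-2u}$ and $e^{-2\max\{u, N\log|z_{k+1}|\}}$ differ by a locally integrable function for every $N \geq N_*$, and then to transfer integrability statements to jumping numbers via the strong openness property. Concretely, I would take $I := (z_{k+1}^{N_{0}})_{o} \subseteq \mathcal{I}(u)_{o}$ (valid for $N_{0}$ large by the hypothesis $\{z : \mathcal{I}(u)_{z} \neq \mathcal{O}_{z}\} \subset \{z_{k+1}=0\}$), apply Remark \ref{rem:20150130} to obtain $l \in (1, c_{o}^{I}(u))$ with $e^{-2u} - e^{-2\max\{u, \frac{N_{0}}{l-1}\log|z_{k+1}|\}}$ locally integrable near $o$, and set $N_{*} := N_{0}/(l-1)$. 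For any $N \geq N_{*}$, on a small enough neighborhood where $|z_{k+1}|<1$ one has $N\log|z_{k+1}| \leq N_{*}\log|z_{k+1}|$, so $\max\{u, N\log|z_{k+1}|\} \leq \max\{u, N_{*}\log|z_{k+1}|\}$ and hence $0 \leq e^{-2u} - e^{-2\max\{u, N\log|z_{k+1}|\}} \leq e^{-2u} - e^{-2\max\{u, N_{*}\log|z_{k+1}|\}}$, which remains integrable.

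Suppose for contradiction that $c_{o}^{J}(\max\{u, N\log|z_{k+1}|\}) > 1$. By the strong openness property, this is equivalent to $|J|^{2} e^{-2\max\{u, N\log|z_{k+1}|\}}$ being locally integrable near $o$. Since $J$ is generated by finitely many holomorphic germs, $|J|^{2}$ is locally bounded, so multiplying the preceding comparison by $|J|^{2}$ shows that $|J|^{2}(e^{-2u} - e^{-2\max\{u, N\log|z_{k+1}|\}})$ is integrable as well. Adding these two contributions yields local integrability of $|J|^{2} e^{-2u}$, i.e.\ $J \subseteq \mathcal{I}(u)_{o}$, contradicting $c_{o}^{J}(u) \leq 1$.

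The ``especially'' assertion is then immediate: the direction $c_{o}^{J}(\max\{u, N\log|z_{k+1}|\}) \leq 1$ is what has just been proved, while the reverse $\geq 1$ holds because $\max\{u, N\log|z_{k+1}|\} \geq u$ implies $|J|^{2} e^{-2c\max\{u, N\log|z_{k+1}|\}} \leq |J|^{2} e^{-2cu}$ for every $c \in (0,1)$, so integrability for $u$ (which $c_{o}^{J}(u)=1$ guarantees for every such $c$) passes to the max. The only real subtlety, rather than a genuine obstacle, is verifying that $N_{*}$ can be chosen uniformly in $J$; this is automatic from the construction, since $l$ and $N_{0}$ are extracted from $u$ alone and do not depend on the ideal $J$.
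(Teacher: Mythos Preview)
Your proposal is correct and follows essentially the same approach as the paper's proof: both hinge on Remark \ref{rem:20150130} applied to $I=(z_{k+1}^{N_{0}})_{o}$ to make $e^{-2u}-e^{-2\max\{u,\frac{N_{0}}{l-1}\log|z_{k+1}|\}}$ locally integrable, then transfer non-integrability of $|J|^{2}e^{-2u}$ to $|J|^{2}e^{-2\max\{u,N\log|z_{k+1}|\}}$. The only differences are cosmetic: you argue by contradiction and handle general $N\geq N_{*}$ directly via the pointwise inequality between differences, whereas the paper argues directly and passes to general $N$ through monotonicity of jumping numbers; also, the implication $c_{o}^{J}(\max\{u,N\log|z_{k+1}|\})>1\Rightarrow |J|^{2}e^{-2\max\{u,N\log|z_{k+1}|\}}$ locally integrable needs only the definition of the jumping number, not strong openness---the latter is needed (and you do use it) to derive the contradiction from integrability of $|J|^{2}e^{-2u}$.
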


\begin{proof}
Using Remark \ref{rem:20150130},
one can obtain that
$e^{-2u}-e^{-2\max\{u,\frac{1}{l-1}N_{0}\log|z_{k+1}|\}}$
is locally integrable near $o$ by letting $I=(z_{k+1}^{N_{0}})_{o}$.

As $|J|^{2}e^{-2u}$ is not locally integrable near $o$ ($\Leftarrow c_{o}^{J}(u)=1$),
then it follows that
$e^{-2\max\{u,\frac{1}{l-1}N_{0}\log|z_{k+1}|\}}$ is not locally integrable near $o$,
which implies
$$c_{o}^{J}(\max\{u,\frac{1}{l-1}N_{0}\log|z_{k+1}|\})\leq 1.$$

Since
$c_{o}^{J}(\max\{u,\frac{1}{l-1}N_{0}\log|z_{k+1}|\})\geq c_{o}^{J}(u)=1$ ($\Leftarrow \max\{u,\frac{1}{l-1}N_{0}\log|z_{k+1}|\}\geq u$),
then it follows that
$c_{o}^{J}(\max\{u,\frac{1}{l-1}N_{0}\log|z_{k+1}|\})=1.$

As $N\log|z_{k+1}|\leq \frac{1}{l-1}N_{0}\log|z_{k+1}|$,
then it follows that
$u\leq \max\{{u},N\log|z_{k+1}|\} \leq \max\{u,\frac{1}{l-1}N_{0}\log|z_{k+1}|\},$
which implies
$$c_{o}^{J}(u)\leq c_{o}(\max\{u,N\log|z_{k+1}|\}) \leq c_{o}^{J}(\max\{u,\frac{1}{l-1}N_{0}\log|z_{k+1}|\}).$$

Note that $c_{o}^{J}(u)=c_{o}^{J}(\max\{u,\frac{1}{l-1}N_{0}\log|z_{k+1}|\})=1$,
then we obtain the present corollary.
\end{proof}

Let $I\subseteq\mathcal{O}_{o}$.
Let $J$ be an coherent ideal satisfying $IJ\subseteq\mathcal{I}(c^{I}_{o}(\varphi)\varphi)_{o}$
($\Leftrightarrow IJ\subseteq\mathcal{I}_{o}(c_{o}^{I}(\varphi)\varphi)$).
Let $\{f_{j}\}_{j=1,2,\cdots,s}$ be a local basis of $J_{o}$.
Denoted by $|J|:=\sum_{i=1}^{s}|f_{i}|$.
Let $\tilde{\varphi}_{l}:=\max\{c^{I}_{o}(\varphi)\varphi,\frac{1}{l-1}\log|J|\}$.

If $\psi_{1}-C\leq\psi\leq\psi_{1}+C$,
then
\begin{equation}
\label{equ:20150524a}
\begin{split}
\max\{\varphi,\psi_{1}\}-C
&\leq \max\{\varphi,\psi_{1}-C\}
\\&\leq \max\{\varphi,\psi\}
\\&\leq\max\{\varphi,\psi_{1}+C\}\leq \max\{\varphi,\psi_{1}\}+C,
\end{split}
\end{equation}
where $\varphi$, $\psi_{1}$ and $\psi$ are plurisubharmonic functions.

By inequality \ref{equ:20150524a}
($\varphi\sim c^{I}_{o}(\varphi)\varphi$,
$\psi\sim\frac{1}{l-1}\log|J|$),
it follows that
$$c_{o}^{I}(\max\{c^{I}_{o}(\varphi)\varphi,\frac{1}{l-1}\log|J|\})$$
is well-defined for any basis of $J$.
The proof of Proposition 2.1 in \cite{GZopen-lelong} also implies the following

\begin{Remark}\label{rem:GZ1005}
For any $l\in(1,\frac{c_{o}^{IJ}(\varphi)}{c_{o}^{I}(\varphi)}]$,
we have $c_{o}^{I}(\tilde{\varphi}_{l})=1.$
\end{Remark}

\begin{proof}
For the convenience of the readers,
we recall the proof in \cite{GZopen-lelong} with subtle modifications as follows:

It is clear that there exists a small enough neighborhood $V_{1}\ni o$
such that

\begin{equation}
\label{equ:20150524b}
\int_{V_{1}}|IJ|^{2}e^{-2c^{I}_{o}(\varphi)\varphi}<\infty.
\end{equation}

Given any real number $l\in(1,\frac{c_{o}^{IJ}(\varphi)}{c_{o}^{I}(\varphi)})$,
by the strong openness property,
there exists a small neighborhood $V_{2}$ of $o$ such that
\begin{equation}
\label{equ:20141005a}
\int_{V_{2}}|IJ|^{2}e^{-2lc_{o}^{I}(\varphi)\varphi}<\infty.
\end{equation}

Then
\begin{equation}
\label{equ:20141005b}
\begin{split}
\int_{V_{2}}|I|^{2}(e^{-2c_{o}^{I}(\varphi)\varphi}-e^{-2\tilde{\varphi}_{l}})
&\leq\int_{\{\varphi<\frac{1}{(l-1)c_{o}^{I}(\varphi)}\log|J|\}\cap V_{2}}|I|^{2}e^{-2c_{o}^{I}(\varphi)\varphi}
\\&=\int_{\{\varphi<\frac{1}{(l-1)c_{o}^{I}(\varphi)}\log|J|\}\cap V_{2}}|I|^{2}e^{2(l-1)c_{o}^{I}(\varphi)\varphi-2lc^{I}_{o}(\varphi)\varphi}
\\&
\leq\int_{\{\varphi<\frac{1}{(l-1)c_{o}^{I}(\varphi)}\log|J|\}\cap V_{2}}|I|^{2}e^{2\log|J|-2lc_{o}^{I}(\varphi)\varphi}
\\&\leq\int_{V_{2}}|I|^{2}|J|^{2}e^{-2lc_{o}^{I}(\varphi)\varphi}<+\infty,
\end{split}
\end{equation}
where the last inequality follows from inequality \ref{equ:20141005a}.

As
$|I|^{2}(e^{-2c_{o}^{I}(\varphi)\varphi}-e^{-2\tilde{\varphi}_{l}})$ is integrable near $o$,
and $|I|^{2}e^{-2c_{o}^{I}(\varphi)\varphi}$ is not integrable near $o$,
it follows that
$|I|^{2}e^{-2\tilde{\varphi}_{l}}$ is not integrable near $o$,
which implies $c_{o}^{I}(\tilde{\varphi}_{l})\leq1$.

As $c_{o}^{I}(\varphi)\varphi\leq \tilde{\varphi}_{l}$,
it follows that
$$|I|^{2}e^{-2c c_{o}^{I}(\varphi)\varphi}\geq |I|^{2}e^{-2c\tilde{\varphi}_{l}}$$
for any $c>0$.
When $c\in(0,1)$,
by the definition of jumping numbers,
it follows that
$|I|^{2}e^{-2c c_{o}^{I}(\varphi)\varphi}$ is locally integrable near $o$,
which implies
$|I|^{2}e^{-2c\tilde{\varphi}_{l}}$ is locally integrable near $o$,
i.e.,
$c_{o}^{I}(\tilde{\varphi}_{l})\geq1.$
Then we have $c_{o}^{I}(\tilde{\varphi}_{l})=1.$
\end{proof}

We recall a consequence of Proposition \ref{Pro:GZ1005} as follows
\begin{Remark}
\label{rem:GZ1005}(\cite{GZopen-lelong})
$\tilde{u}$ (as in Proposition \ref{Pro:GZ1005}) has the following properties

$(1)$ for any $z\in(\{z|c_{z}(u)\leq1\},o)=(V(\mathcal{I}(u)),o)$, inequality $c_{z}(u)\leq c_{z}(\tilde{u})\leq 1$ holds;

$(2)$ if $c_{z_{0}}(u)=1$, then $c_{z_{0}}(\tilde{u})=1$, where $z_{0}\in(\{z|c_{z}(u)\leq1\},o)$.
\end{Remark}

Let $I\subseteq\mathcal{O}_{o}$ be a coherent ideal,
and let $u$ be a plurisubharmonic function near $o$.
We present the following consequence of Remark \ref{rem:GZ1005} about the integrability of the ideals related to weight of jumping number one.

\begin{Proposition}
\label{thm:jump_supp}
Let $J\subseteq\mathcal{O}_{o}$ be a coherent ideal.
Assume that $c_{o}^{I}(u)=1$.
If $(V(\mathcal{I}(u)),o)\subseteq(V(J),o)$,
then $|I|^{2}|J|^{2\varepsilon}e^{-2u}$ is locally integrable near $o$ for any $\varepsilon>0$.
\end{Proposition}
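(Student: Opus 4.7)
My plan is to first convert the geometric hypothesis $(V(\mathcal{I}(u)),o) \subseteq (V(J),o)$ into the statement that some power of $J$ lies in $\mathcal{I}(u)_o$, and then deduce the desired integrability via a H\"older interpolation between a weight slightly stronger than $u$ and one slightly weaker than $u$.

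First, by the complex Nullstellensatz (Theorem \ref{thm:hilbert}), the inclusion of germs $(V(\mathcal{I}(u)),o) \subseteq (V(J),o)$ forces $J \subseteq \sqrt{\mathcal{I}(u)}$, so some integer $N \geq 1$ satisfies $J^N \subseteq \mathcal{I}(u)$. Since $|IJ^N|$ is comparable to $|I|\cdot|J|^N$ up to multiplicative constants, this already yields that $|I|^2 |J|^{2N} e^{-2u}$ is locally integrable near $o$, i.e.\ $IJ^N \subseteq \mathcal{I}(u)_o$. Applying the strong openness property $\mathcal{I}(u) = \bigcup_{\eta > 0} \mathcal{I}((1+\eta)u)$ to this inclusion produces $\delta > 0$ such that $|I|^2 |J|^{2N} e^{-2(1+\delta)u}$ is integrable on a small neighborhood of $o$. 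On the other hand, the hypothesis $c_o^I(u) = 1$ directly gives that $|I|^2 e^{-2cu}$ is integrable near $o$ for every $c \in [0,1)$.

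Second, I would interpolate between these two integrability statements using H\"older's inequality. For $\varepsilon \in (0, N)$, set $c := 1 - \frac{\varepsilon\delta}{N-\varepsilon}\in(0,1)$, so that the pointwise identity
$$|I|^2 |J|^{2\varepsilon} e^{-2u} = \bigl(|I|^2 e^{-2cu}\bigr)^{1 - \varepsilon/N} \, \bigl(|I|^2 |J|^{2N} e^{-2(1+\delta)u}\bigr)^{\varepsilon/N}$$
holds (one simply matches the exponents of $|I|$, $|J|$ and $u$). H\"older's inequality with conjugate exponents $\frac{N}{N-\varepsilon}$ and $\frac{N}{\varepsilon}$ then bounds the integral of the left-hand side by the product of the two already-established finite integrals. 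For $\varepsilon \geq N$, after shrinking the neighborhood so that $|J| \leq 1$ there, the trivial bound $|J|^{2\varepsilon} \leq |J|^{2N}$ reduces the claim to the integrability of $|I|^2 |J|^{2N} e^{-2u}$ from the previous step.

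The only delicate point is arranging that the H\"older interpolation lands at a genuinely subcritical weight $c \in (0,1)$ on the $I$-side; this is precisely what the strong openness property is used for, upgrading $IJ^N \subseteq \mathcal{I}(u)$ to the strict inclusion $IJ^N \subseteq \mathcal{I}((1+\delta)u)$. Without this upgrade the interpolation would only reach $c=1$, where $|I|^2 e^{-2u}$ fails to be integrable since $c_o^I(u)=1$. Hence this passage through strong openness is the main (though small) obstacle, and it is why the proposition sits naturally among the preparatory results of this section.
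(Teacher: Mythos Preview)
Your argument is correct. The only cosmetic slip is the claim that $c=1-\tfrac{\varepsilon\delta}{N-\varepsilon}$ lies in $(0,1)$ for all $\varepsilon\in(0,N)$: in fact $c\le 0$ once $\varepsilon\ge N/(1+\delta)$. This does not matter, since for $c<1$ (including $c\le 0$) the factor $|I|^{2}e^{-2cu}$ is still locally integrable (for $c\le 0$ it is even locally bounded, as $u$ is bounded above), so the H\"older estimate goes through unchanged.

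Your route, however, differs from the paper's. Both proofs begin with the Nullstellensatz to get $J^{N}\subseteq\mathcal{I}(u)_{o}$ and both rely on strong openness, but the paper does not interpolate directly. Instead it invokes the equisingular–approximation machinery of Proposition~\ref{Pro:GZ1005}/Remark~\ref{rem:20150130}: it replaces $u$ by $\tilde u:=\max\{u,p_{0}\log|J|\}$ for a suitable $p_{0}$, uses that $e^{-2u}-e^{-2\tilde u}$ is locally integrable (this is where strong openness enters), and then observes that $|J|^{2\varepsilon}e^{-2\tilde u}\le e^{-2(1-\varepsilon/p_{0})\tilde u}\le e^{-2(1-\varepsilon/p_{0})u}$, finishing by contradiction with $c_{o}^{I}(u)=1$. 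Your approach bypasses the auxiliary weight $\tilde u$ entirely: you apply strong openness once to the single ideal $IJ^{N}$ to produce the $(1+\delta)$-weighted estimate, and then a one-line H\"older interpolation between that and the subcritical $c<1$ estimate yields the conclusion. This is shorter and more self-contained; the paper's detour through $\tilde u$ is natural in context because that construction is reused elsewhere (e.g.\ in the proof of the main theorem), but for this proposition alone your direct interpolation is the cleaner argument.
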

After the present article has been written,
Demailly kindly shared his manuscript \cite{demailly2015} with the first author of the present article,
which includes Proposition \ref{thm:jump_supp} (Lemma (4.2) in \cite{demailly2015}) as a consequence of the strong openness property of the multiplier ideal (see \cite{GZopen-c}).

\begin{proof}(proof of Proposition \ref{thm:jump_supp})
Let $J_{0}\subseteq\mathcal{O}_{o}$ be a coherent ideal satisfying
$(V(J_{0}),o)\supseteq(V(\mathcal{I}(u)),o).$
By Theorem \ref{thm:hilbert} $(I\sim \mathcal{I}(u)_{o})$,
it follows that there exists large enough positive integer $N$
such that $J^{N}_{0}\subseteq\mathcal{I}(u)_{o}$.

By Remark \ref{rem:GZ1005},
it follows that exist $p_{0}>0$ large enough
such that
$e^{-2u}-e^{-2\max\{u,p_{0}\log|J_{0}|\}}$
is locally integrable near $o$.

It suffices to prove that
$|I|^{2}|J_{0}|^{2\varepsilon}e^{-2\max\{u,p_{0}\log|J_{0}|\}}$ is locally integrable near $o$ for small enough $\varepsilon>0$.
We prove the above statement by contradiction:
If not, then there exists $\varepsilon_{0}>0$,
such that $|I|^{2}|J_{0}|^{2\varepsilon_{0}}e^{-2\max\{u,p_{0}\log|J_{0}|\}}$ is not locally integrable near $o$.
Note that $\varepsilon_{0}\log|J_{0}|\leq \frac{\varepsilon_{0}}{p_{0}}\max\{u,p_{0}\log|J_{0}|\}$,
then it follows that
$|I|^{2}e^{-2(1-\frac{\varepsilon_{0}}{p_{0}})\max\{u,p_{0}\log|J_{0}|\}}$ is not locally integrable near $o$.
Note that $u\leq\max\{u,p_{0}\log|J_{0}|\}$,
then it follows that $|I|^{2}e^{-2(1-\frac{\varepsilon_{0}}{p_{0}})u}$ is not locally integrable near $o$,
which contradicts $c_{o}^{I}(u)=1$.
Then we prove Proposition \ref{thm:jump_supp}.
\end{proof}
Let $I=\mathcal{O}_{o}$, $\varepsilon=1$.
Using Proposition \ref{thm:jump_supp},
we obtain the following result.

\begin{Corollary}
\label{coro:jump_supp}Let $J\subseteq\mathcal{O}_{o}$ be a coherent ideal.
Assume that $c_{o}(u)=1$.
Then the following two statements are equivalent\\
$(1)$ $(V(J),o)\supseteq(V(\mathcal{I}(u)),o)$;\\
$(2)$ $|J|^{2}e^{-2u}$ is locally integrable near $o$, i.e. $J\subseteq \mathcal{I}(u)_{o}$.
\end{Corollary}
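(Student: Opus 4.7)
The plan is to deduce this as a nearly immediate consequence of Proposition \ref{thm:jump_supp}, observing that the two directions have very different weights.

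For the direction $(1)\Rightarrow(2)$, I would simply specialize Proposition \ref{thm:jump_supp} to $I=\mathcal{O}_{o}$ and $\varepsilon=1$. Then the hypothesis $c_{o}^{I}(u)=c_{o}(u)=1$ is exactly what we assume, and the containment $(V(\mathcal{I}(u)),o)\subseteq(V(J),o)$ is exactly $(1)$. Proposition \ref{thm:jump_supp} therefore gives that $|J|^{2}e^{-2u}$ is locally integrable near $o$, which by the definition of the multiplier ideal sheaf is equivalent to $J\subseteq\mathcal{I}(u)_{o}$. No further work is needed.

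For the direction $(2)\Rightarrow(1)$, I would argue directly. From $J\subseteq\mathcal{I}(u)_{o}$ and the coherence of both sheaves, one has an inclusion of ideals $J_{z}\subseteq\mathcal{I}(u)_{z}$ for all $z$ in a small enough neighborhood of $o$. Consequently, every germ of $\mathcal{I}(u)_{z}$ that fails to vanish is matched by a germ of $J_{z}$, which reverses at the level of zero sets: $V(\mathcal{I}(u))\supseteq$—no, the other way: if $z\in V(\mathcal{I}(u))$, meaning every germ in $\mathcal{I}(u)_{z}$ vanishes at $z$, then in particular every germ in $J_{z}\subseteq\mathcal{I}(u)_{z}$ vanishes at $z$, so $z\in V(J)$. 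Thus $(V(\mathcal{I}(u)),o)\subseteq(V(J),o)$, which is $(1)$.

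The only real content of the corollary is $(1)\Rightarrow(2)$; the reverse is purely formal. There is therefore no genuine obstacle, since Proposition \ref{thm:jump_supp} already encapsulates the analytic work (via Hilbert's Nullstellensatz to get $J_{0}^{N}\subseteq\mathcal{I}(u)_{o}$, Remark \ref{rem:GZ1005} to approximate $u$ by $\max\{u,p_{0}\log|J_{0}|\}$, and the strong openness property implicit in $c_{o}^{I}(u)=1$). The corollary just reads off the $I=\mathcal{O}_{o}$, $\varepsilon=1$ case and notes the trivial reverse inclusion.
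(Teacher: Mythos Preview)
Your proposal is correct and matches the paper's approach exactly: the paper introduces the corollary with the sentence ``Let $I=\mathcal{O}_{o}$, $\varepsilon=1$. Using Proposition \ref{thm:jump_supp}, we obtain the following result,'' which is precisely your argument for $(1)\Rightarrow(2)$, and it leaves $(2)\Rightarrow(1)$ implicit as the trivial direction (inclusion of ideals reverses under $V(\cdot)$), just as you note. Your remark that the analytic content lives entirely in Proposition \ref{thm:jump_supp} is accurate.
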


\subsection{Measures along the fibres}
$\\$

Let $X:=\{z_{k+1}=\cdots=z_{n}=0\}$.
Consider a map $q$ from $\mathbb{C}^{n}\setminus X$ to $\mathbb{C}\mathbb{P}^{n-k-1}$:
$q(z_{1},\cdots,z_{n})=(z_{k+1}:\cdots:z_{n}).$

Let $Y$ be an analytic set in $\mathbb{B}^{n}$
whose complex dimension is smaller than $n-k$.
By the same proof as that of Lemma 2.8 in \cite{GZopen-lelong}
(the methods can be found in \cite{Grau-Rem84} and \cite{Chirka89}),
one can obtain that

\begin{Lemma}
\label{l:intersec_fibre}
For almost all $(z_{k+1}:\cdots:z_{n})$,
the complex dimension of $q^{-1}(z_{k+1}:\cdots:z_{n})\cap Y$ is zero,
i.e.,
$(\overline{q^{-1}(z_{k+1}:\cdots:z_{n})}\cap Y,o)=(X\cap Y,o)$.
\end{Lemma}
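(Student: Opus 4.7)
The goal is to show that for almost all $a\in\mathbb{CP}^{n-k-1}$, the $(k+1)$-plane $L_{a}:=\overline{q^{-1}(a)}\supset X$ meets $Y$ near $o$ only inside $X$. I would first discard the irreducible components of $Y$ contained in $X$: each such component sits in every $L_{a}$ but already lies in $X\cap Y$, so it contributes nothing to the germ equation. Writing $Y'$ for the union of the remaining components, one has $\dim Y'\leq n-k-1$ and no component of $Y'$ is in $X$. The natural setting is then the blow-up $\pi:\widetilde{\mathbb{B}^{n}}\to\mathbb{B}^{n}$ along $X$, with exceptional divisor $E=\pi^{-1}(X)\cong X\times\mathbb{CP}^{n-k-1}$. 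The rational map $q$ extends to a holomorphic $\tilde{q}:\widetilde{\mathbb{B}^{n}}\to\mathbb{CP}^{n-k-1}$, each fibre $\tilde{q}^{-1}(a)$ is a smooth $(k+1)$-dimensional submanifold which $\pi$ maps biholomorphically onto $L_{a}$, and $\pi^{-1}(o)\cap\tilde{q}^{-1}(a)=\{(o,a)\}$. Denote by $\tilde{Y}$ the strict transform of $Y'$; then $\dim\tilde{Y}=\dim Y'\leq n-k-1$ and no irreducible component of $\tilde{Y}$ lies in $E$.

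Two standard dimension estimates then finish the argument. First, I would apply the generic fibre-dimension theorem (cf.\ \cite{Chirka89}) to $\tilde{q}|_{\tilde{Y}\setminus E}$: its image in $\mathbb{CP}^{n-k-1}$ has dimension $\leq n-k-1$, so outside a proper analytic, hence measure zero, subset $\Sigma_{1}\subset\mathbb{CP}^{n-k-1}$ one has $\dim\bigl(\tilde{q}^{-1}(a)\cap(\tilde{Y}\setminus E)\bigr)\leq 0$. Second, since $\tilde{Y}$ has no component in the smooth divisor $E$, each component of $\tilde{Y}\cap E$ has dimension at most $\dim\tilde{Y}-1\leq n-k-2$; projecting $\tilde{Y}\cap E$ through the second factor of $E\cong X\times\mathbb{CP}^{n-k-1}$ yields an analytic set $\Sigma_{2}\subset\mathbb{CP}^{n-k-1}$ of dimension $\leq n-k-2$, which is again of measure zero.

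For any $a$ outside the measure-zero set $\Sigma_{1}\cup\Sigma_{2}$, the fibre $\tilde{q}^{-1}(a)\cap\tilde{Y}$ is at most $0$-dimensional and avoids the single point $(o,a)$; pushing down through $\pi$ then shows that $L_{a}\cap Y'$ is discrete near $o$ and does not accumulate at $o$, whence $(L_{a}\cap Y,o)=(X\cap Y,o)$ as germs, which is the desired equality. I expect the main obstacle to be the accumulation step rather than the generic finiteness of fibres: without the blow-up one would have to rule out by hand the possibility that a sequence of points of $L_{a}\cap Y'$ converges to $o\in X$. The strict-transform formalism converts this into the routine statement that $\tilde{Y}\cap E$ drops dimension by one, which I expect to match the mechanism used in Lemma 2.8 of \cite{GZopen-lelong}.
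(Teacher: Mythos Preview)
Your argument is correct, but it takes a considerably more elaborate route than the paper. The paper's proof is a three-line Hausdorff-measure slicing: since $\dim_{\mathbb C}Y<n-k$, one has $\mathcal H^{2(n-k)}(Y)=0$; the base $\mathbb{CP}^{n-k-1}$ carries positive $\mathcal H^{2(n-k-1)}$-measure, so a Fubini/coarea argument for the submersion $q$ gives $\mathcal H^{2}\bigl(q^{-1}(a)\cap Y\bigr)=0$ for almost every $a$, hence $\dim_{\mathbb C}\bigl(q^{-1}(a)\cap Y\bigr)\le 0$. The germ equality then follows because any irreducible component of the analytic germ $(L_a\cap Y,o)$ not contained in $X$ would contribute positive complex dimension to $q^{-1}(a)\cap Y$ near $o$.

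Your blow-up approach trades this measure-theoretic shortcut for structural control: it identifies the exceptional directions as lying in an \emph{analytic} (not merely null) subset $\Sigma_1\cup\Sigma_2\subset\mathbb{CP}^{n-k-1}$, and it separates the two issues---generic finiteness of $L_a\cap Y'$ away from $X$ (your $\Sigma_1$) and non-accumulation at $o$ (your $\Sigma_2$)---that the paper's argument bundles together. In fact for the germ statement you only need $\Sigma_2$: once $(o,a)\notin\tilde Y$, closedness of $\tilde Y$ already gives a neighbourhood of $(o,a)$ disjoint from $\tilde Y$, so $L_a\cap Y'\subset X$ near $o$ without invoking $\Sigma_1$ at all. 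One small caution: the images you call $\Sigma_1,\Sigma_2$ need not literally be analytic (the relevant maps are not proper), but their closures are proper analytic because the fibre-dimension stratification bounds their dimension by $n-k-2$; this is harmless for the measure-zero conclusion.
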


\begin{proof}
Note that the  $2(n-k)-2$ dimensional Hausdorff measure on $\mathbb{C}\mathbb{P}^{n-k-1}$ is positive,
and $2(n-k)$ dimensional Hausdorff measure of $Y$ is zero,
then it follows that for almost all $(z_{k+1}:\cdots:z_{n})$,
the $2$ dimensional Hausdorff measure of $q^{-1}(z_{k+1}:\cdots:z_{n})\cap Y$ is zero,
i.e.,
the complex dimension of $q^{-1}(z_{k+1}:\cdots:z_{n})\cap Y$ is zero.
Then we obtain the present lemma.
\end{proof}

\subsection{Proof of Proposition \ref{thm:add_prod_cse}}\label{subsec:add_prod}
$\\$

By the definition of jumping numbers,
it follows that for any $\varepsilon>0$,
there exists a neighborhood $U_{\varepsilon}$ of $o$ and $C_{\varepsilon}>0$
such that
$$r^{-2(c_{o}^{I}(u)-\varepsilon)}\int_{\Delta^{n}}\mathbb{I}_{\{u<\log r\}\cap U_{\varepsilon}}|I|^{2}d\lambda_{n}<C_{\varepsilon}$$
holds for any $r>0$,
which implies
\begin{equation}
\label{equ:DK_GZ0322a}
\liminf_{r\to0^{+}}\frac{\log(\int_{\Delta^{n}}\mathbb{I}_{\{u<\log r\}\cap U_{\varepsilon}}|I|^{2}d\lambda_{n})}{2\log r}\geq c_{o}^{I}(u)-\varepsilon.
\end{equation}

We recall our solution of a conjecture posed by Demailly-Kollar \cite{D-K01} (which means that
$\liminf_{r\to0^{+}}(-r^{2c_{o}^{I}(u)}\int_{\Delta^{n}}\mathbb{I}_{\{u<\log r\}\cap U}d\lambda_{n})>0,$
holds)
as follows
\begin{Theorem}
\label{thm:DK_GZ}\cite{GZopen-b,GZopen-effect}
Let $u$ be a plurisubharmonic function on $\Delta^{n}\subset\mathbb{C}^{n}$
and $I$ be a coherent ideal in $\mathcal{O}_{o}$.
Then for any neighborhood $U$ of $o$,
there exists $C_{\varepsilon}>0$ such that
$$(-r^{2c_{o}^{I}(u)}\int_{\Delta^{n}}\mathbb{I}_{\{u<\log r\}\cap U}|I|^{2}d\lambda_{n})>C_{\varepsilon}.$$
\end{Theorem}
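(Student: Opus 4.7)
Write $c := c_o^I(u)$ and $G(r) := \int_{U} \mathbb{I}_{\{u < \log r\}} |I|^2 \, d\lambda_n$; the substantive content of the assertion is the pointwise lower bound $G(r) \geq C \, r^{2c}$ for all sufficiently small $r > 0$ and some $C = C(U) > 0$. My plan is an argument by contradiction driven by the sharp-constant Ohsawa--Takegoshi $L^2$ extension theorem of B{\l}ocki and of Guan--Zhou (see \cite{guan-zhou13aa,guan-zhou15}), the non-sharp version being insufficient.

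Step one is to record the layer-cake identity, valid after shifting so that $u \leq 0$ on $U$,
\begin{equation*}
\int_{U} |I|^2 e^{-2cu} \, d\lambda_n \;=\; 2c \int_{0}^{\infty} r^{-2c-1} G(r) \, dr.
\end{equation*}
By strong openness (\cite{GZopen-c}) the left-hand side diverges, so the right-hand integrand is not $L^1$ near $r = 0$. This alone rules out decay of $G$ as fast as $r^{2c}|\log r|^{-2}$ but does not quite deliver the claimed pointwise bound; closing this logarithmic gap is precisely what the sharpness of the extension constant is designed for.

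Next, I would apply the optimal-constant extension from the zero-dimensional subvariety $\{o\}$ to a shrinking family of bounded pseudoconvex neighborhoods $D_j$, with a weight $\varphi$ of the form $cu + \log|I|$, regularized (for instance by the equisingular approximation from \cite{GZopen-lelong} or by truncating with generators of $\mathcal{I}(cu)_o$) so that $\varphi(o) > -\infty$. Choosing the scales $D_j$ in tandem with $r_j$ so that $\{u < \log r_j\}$ sits inside $D_j$ with the correct relative geometry, the sharp extension constant is designed to convert a hypothetical sequence $r_j \to 0$ with $r_j^{-2c} G(r_j) \to 0$ into uniform weighted $L^2$ bounds for holomorphic extensions $F_j$ satisfying $F_j(o) = 1$.

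A Montel / normal-families argument then yields a holomorphic limit $F$ near $o$ with $F(o) = 1$ and $|F|^2 |I|^2 e^{-2cu}$ locally integrable; via Skoda-type division (or directly from the strong openness property) this implies $I \subseteq \mathcal{I}(cu)_o$, contradicting the definition of the jumping number $c = c_o^I(u)$. The main obstacle, and the reason this estimate resisted proof until the advent of optimal $L^2$ extension, is matching the OT-extension constant to the exponent $2c$ without any $\varepsilon$-loss: any non-sharp version of the extension would yield only $G(r) \geq C_\varepsilon \, r^{2(c+\varepsilon)}$, which is strictly weaker than the borderline estimate being asserted.
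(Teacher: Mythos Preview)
The paper does not prove this theorem. Theorem~\ref{thm:DK_GZ} is stated with the attribution \cite{GZopen-b,GZopen-effect} and is used as a black box in the proof of Proposition~\ref{thm:add_prod_cse}; no argument for it appears anywhere in the present article. There is therefore no ``paper's own proof'' against which to compare your attempt.

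That said, your sketch is in the right \emph{spirit}: the result in \cite{GZopen-effect} is indeed obtained from the optimal-constant $L^2$ extension theorem, and you have correctly identified that any $\varepsilon$-loss in the extension constant would only yield $G(r)\geq C_\varepsilon r^{2(c+\varepsilon)}$, which is why non-sharp Ohsawa--Takegoshi is insufficient. However, the implementation you outline is not the one in \cite{GZopen-b,GZopen-effect}, and as written it has a gap. The Guan--Zhou argument does not extend from the zero-dimensional set $\{o\}$ with a regularised weight $cu+\log|I|$. Instead it adds an auxiliary variable $w$, works on $\Delta^n\times\Delta$, and applies the sharp extension theorem from the \emph{moving} hyperplanes $\{w=a\}$ with a weight built from $u$; the sublevel-set volume $G(r)$ is then read off by choosing $|a|$ comparable to $r$. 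This ``movable'' use of Ohsawa--Takegoshi is the same mechanism that drives the proof of strong openness itself (cf.\ Lemma~\ref{l:open_b} and Section~\ref{sec:Berndtsson} of the present paper, where the identical idea is rehearsed). Your proposed extension from a single point, with an ad hoc regularisation to make $\varphi(o)>-\infty$, does not give a clean way to link the extension constant to $G(r_j)$: once you truncate the weight so that it is finite at $o$, the resulting $L^2$ bound no longer controls $\int|F|^2|I|^2e^{-2cu}$ on the original sublevel set, and the contradiction step ``$I\subseteq\mathcal I(cu)_o$'' does not follow. The moving-hyperplane device is precisely what bridges this gap.
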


By Theorem \ref{thm:DK_GZ},
it follows that for any neighborhood $U$ of $o$,
\begin{equation}
\label{equ:DK_GZ0322b}
\limsup_{r\to0^{+}}\frac{\log(\int_{\Delta^{n}}\mathbb{I}_{\{u<\log r\}\cap U}|I|^{2}d\lambda_{n})}{2\log r}\leq c_{o}^{I}(u)
\end{equation}
holds.

As $\{\max\{u\circ\pi_{1},v\circ\pi_{2}\}<\log r\}\cap (\pi_{1}^{-1}(U)\cap\pi_{2}^{-1}(V))=(\{u<\log r\}\cap U)\times(\{v<\log r\}\cap V)$,
then it follows that
\begin{equation}
\label{equ:DK_GZ0322c}
\begin{split}
\int_{\Delta^{n}\times\Delta^{n}}&\mathbb{I}_{\{\max\{u\circ\pi_{1},v\circ\pi_{2}\}<\log r\}\cap (\pi_{1}^{-1}(U)\cap\pi_{2}^{-1}(V))}(|\pi_{1}^{*}I|\times|\pi_{2}^{*}J|)^{2}d\lambda_{2n})
\\=&\int_{\Delta^{n}}\mathbb{I}_{\{u<\log r\}\cap U}|I|^{2}d\lambda_{n}\times
\int_{\Delta^{n}}\mathbb{I}_{\{v<\log r\}\cap V}|J|^{2}d\lambda_{n},
\end{split}
\end{equation}
where $U$ and $V$ are neighborhoods of $o\in\mathbb{C}^{n}$, $I$ and $J$ are coherent ideals in $\mathcal{O}_{o}$.

By inequality \ref{equ:DK_GZ0322a},
it follows that for any $\varepsilon>0$,
there exist neighborhoods $U_{\varepsilon}$ and $V_{\varepsilon}$ of $o$
such that
$$\liminf_{r\to0^{+}}\frac{\log\int_{\Delta^{n}}\mathbb{I}_{\{u<\log r\}\cap U_{\varepsilon}}|I|^{2}d\lambda_{n}}{2\log r}\geq c_{o}^{I}(u)-\varepsilon$$
and
$$\liminf_{r\to0^{+}}\frac{\log\int_{\Delta^{n}}\mathbb{I}_{\{v<\log r\}\cap V_{\varepsilon}}|J|^{2}d\lambda_{n}}{2\log r}\geq c_{o}^{J}(v)-\varepsilon.$$
By inequality \ref{equ:DK_GZ0322b},
it follows that
\begin{equation}
\label{equ:DK_GZ0322d}
\begin{split}
&
c_{o}^{I\times J}(\max\{u\circ\pi_{1},v\circ\pi_{2}\})
\\=&\limsup_{r\to0^{+}}\frac{\log\int_{\Delta^{n}\times\Delta^{n}}\mathbb{I}_{\{\max\{u\circ\pi_{1},v\circ\pi_{2}\}<\log r\}\cap (\pi_{1}^{-1}(U_{\varepsilon})\cap\pi_{2}^{-1}(V_{\varepsilon}))}(|\pi_{1}^{*}I|\times|\pi_{2}^{*}J|)^{2}d\lambda_{2n}}
{2\log r}
\\\geq&
\liminf_{r\to0^{+}}\frac{\log\int_{\Delta^{n}\times\Delta^{n}}\mathbb{I}_{\{\max\{u\circ\pi_{1},v\circ\pi_{2}\}<\log r\}\cap (\pi_{1}^{-1}(U_{\varepsilon})\cap\pi_{2}^{-1}(V_{\varepsilon}))}(|\pi_{1}^{*}I|\times|\pi_{2}^{*}J|)^{2}d\lambda_{2n}}
{2\log r}
\\=&
\liminf_{r\to0^{+}}\frac{\log\int_{\Delta^{n}}\mathbb{I}_{\{u<\log r\}\cap U_{\varepsilon}}|I|^{2}d\lambda_{n}}{2\log r}+
\liminf_{r\to0^{+}}\frac{\log\int_{\Delta^{n}}\mathbb{I}_{\{v<\log r\}\cap V_{\varepsilon}}|J|^{2}d\lambda_{n}}{2\log r}
\\\geq&
(c_{o}^{I}(u)-\varepsilon)+(c_{o}^{J}(v)-\varepsilon).
\end{split}
\end{equation}

By inequality \ref{equ:DK_GZ0322a},
it follows that for any $\varepsilon>0$,
there exist neighborhoods $U'_{\varepsilon}$ and $V'_{\varepsilon}$ of $o$
such that
\begin{equation}
\label{equ:DK_GZ0322e}
\begin{split}
\liminf_{r\to0^{+}}&\frac{\log\int_{\Delta^{n}\times\Delta^{n}}\mathbb{I}_{\{\max\{u\circ\pi_{1},v\circ\pi_{2}\}<\log r\}\cap (\pi_{1}^{-1}(U'_{\varepsilon})\cap\pi_{2}^{-1}(V'_{\varepsilon}))}
(|\pi_{1}^{*}I|\times|\pi_{2}^{*}J|)^{2}d\lambda_{2n}}{2\log r}
\\&\geq c_{o}^{I\times J}(\max\{u\circ\pi_{1},v\circ\pi_{2}\})-\varepsilon.
\end{split}
\end{equation}
By inequality \ref{equ:DK_GZ0322b},
it follows that
\begin{equation}
\label{equ:DK_GZ0322f}
\begin{split}
&c_{o}^{I}(u)+c_{o}^{J}(v)
\\\geq&\limsup_{r\to0^{+}}\frac{\log \int_{\Delta^{n}}\mathbb{I}_{\{u<\log r\}\cap U'_{\varepsilon}}|I|^{2}d\lambda_{n}}{2\log r}+
\limsup_{r\to0^{+}}\frac{\log \int_{\Delta^{n}}\mathbb{I}_{\{v<\log r\}\cap V'_{\varepsilon}}|J|^{2}d\lambda_{n}}{2\log r}
\\=&\limsup_{r\to0^{+}}\frac{\log \int_{\Delta^{n}\times\Delta^{n}}\mathbb{I}_{\{\max\{u\circ\pi_{1},v\circ\pi_{2}\}<\log r\}\cap (\pi_{1}^{-1}(U'_{\varepsilon})\cap\pi_{2}^{-1}(V'_{\varepsilon}))}
(|\pi_{1}^{*}I|\times|\pi_{2}^{*}J|)^{2}d\lambda_{2n}}{2\log r}
\\\geq&
\liminf_{r\to0^{+}}\frac{\log\int_{\Delta^{n}\times\Delta^{n}}\mathbb{I}_{\{\max\{u\circ\pi_{1},v\circ\pi_{2}\}<\log r\}\cap (\pi_{1}^{-1}(U'_{\varepsilon})\cap\pi_{2}^{-1}(V'_{\varepsilon}))}
(|\pi_{1}^{*}I|\times|\pi_{2}^{*}J|)^{2}d\lambda_{2n}}{2\log r}
\\\geq&
c_{o}^{I\times J}(\max\{u\circ\pi_{1},v\circ\pi_{2}\})-\varepsilon.
\end{split}
\end{equation}

Letting $\varepsilon \rightarrow 0$, using inequality \ref{equ:DK_GZ0322d} and inequality \ref{equ:DK_GZ0322f},
we obtain Proposition \ref{thm:add_prod_cse}.

\subsection{Slicing result on complex singularity exponent and subadditivity theorem on jumping numbers}
$\\$

Let $v$ be a plurisubharmonic function on $\Delta^{n}$.
Let $\mathcal{H}_{2v}(\Delta^{n})$ be the Hilbert space of the holomorphic function $f$ on $\Delta^{n}$ satisfying
(the norm) $(\int_{\Delta^{n}}|f|^{2}e^{-2v})^{1/2}<+\infty$.
Let $K_{\Delta^{n},2v}$ be the Bergman kernel associated with $\mathcal{H}_{2v}(\Delta^{n})$.

It is easy to see that $\int_{\Delta^{n}_{r}}e^{-2v}=+\infty$ (for any $r>0$) if and only if
$K_{\Delta^{n},2v}(o)=0$, where $o$ is the origin in $\mathbb{C}^{n}$.

By definition of $c_{o}(v)$,
it follows that $\int_{\Delta^{n}_{r}}e^{-2v}=+\infty$ (for any $r>0$) implies $c_{o}(v)\leq 1$;
by Berndtsson's solution of
the openness conjecture,
it follows that $c_{o}(v)\leq 1$ implies $\int_{\Delta^{n}_{r}}e^{-2v}=+\infty$ (for any $r>0$).
Then one can obtain

\begin{Lemma}
\label{lem:bergman_zero}
$c_{o}(v)\leq 1$ if and only if $K_{\Delta^{n},2v}(o)=0$.
\end{Lemma}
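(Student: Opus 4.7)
The proof plan is to chain the two equivalences sketched in the paragraph immediately preceding the lemma:
$$c_o(v) \leq 1 \;\Longleftrightarrow\; \textstyle\int_{\Delta^n_r} e^{-2v} = +\infty \text{ for every } r > 0 \;\Longleftrightarrow\; K_{\Delta^n, 2v}(o) = 0.$$

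For the right equivalence, I would first prove that global non-integrability of $e^{-2v}$ on every $\Delta^n_r$ forces the Bergman kernel at $o$ to vanish: given $f \in \mathcal{H}_{2v}(\Delta^n)$ with $f(o) \neq 0$, continuity yields $r > 0$ such that $|f|^2 \geq |f(o)|^2/4$ on $\Delta^n_r$, and hence $\int_{\Delta^n} |f|^2 e^{-2v} \geq \tfrac{1}{4}|f(o)|^2 \int_{\Delta^n_r} e^{-2v} = +\infty$, contradicting $f \in \mathcal{H}_{2v}(\Delta^n)$. Consequently every element of $\mathcal{H}_{2v}(\Delta^n)$ vanishes at $o$, so $K_{\Delta^n,2v}(o) = 0$. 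For the converse, if $\int_{\Delta^n_{r_0}} e^{-2v} < +\infty$ for some $r_0$, I would apply the Ohsawa-Takegoshi extension theorem (Theorem \ref{t:ot_plane}) with the $0$-dimensional plane $H = \{o\}$ and $f \equiv 1$, producing $F \in \mathcal{O}(\Delta^n)$ with $F(o) = 1$ and $\int_{\Delta^n}|F|^2 e^{-2v} \leq C\, e^{-2v(o)}$, which is finite whenever $v(o) > -\infty$; this gives $K_{\Delta^n,2v}(o) \geq |F(o)|^2/\|F\|^2 > 0$.

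For the left equivalence I would invoke the definition $c_o(v) = \sup\{c \geq 0 : e^{-2cv}\text{ is locally integrable near }o\}$ together with Berndtsson's resolution of the openness conjecture. After subtracting a constant so that $v \leq 0$ on a neighborhood of $o$, one has $e^{-2cv} \leq e^{-2v}$ for every $c \in [0,1]$, so non-integrability of $e^{-2v}$ on each $\Delta^n_r$ immediately gives $c_o(v) \leq 1$. For the opposite implication, assuming $c_o(v) \leq 1$, Berndtsson's theorem provides $\mathcal{I}(c_o(v) v)_o \neq \mathcal{O}_o$, i.e. $e^{-2 c_o(v) v}$ is not locally integrable near $o$; combined with $e^{-2v} \geq e^{-2 c_o(v) v}$ this yields $\int_{\Delta^n_r} e^{-2v} = +\infty$ for every $r > 0$.

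The only subtle point is the borderline case $v(o) = -\infty$ in the Ohsawa-Takegoshi step above, where the estimate $C\, e^{-2v(o)}$ is vacuous. To handle it, I would regularize by setting $v_\varepsilon := \max\{v, -\varepsilon^{-1}\}$, which is plurisubharmonic with $v_\varepsilon(o) > -\infty$, apply OT with weight $v_\varepsilon$ to obtain extensions $F_\varepsilon$ with $F_\varepsilon(o) = 1$, and exploit the strong openness property $\mathcal{I}(v)_o = \bigcup_{\varepsilon > 0}\mathcal{I}((1+\varepsilon) v)_o$ to extract a limiting element of $\mathcal{H}_{2v}(\Delta^n)$ with nonzero value at $o$. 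Everything else in the argument is routine given the definitions and the results already recalled in the excerpt.
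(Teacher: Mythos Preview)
Your chain of equivalences matches the paper's outline exactly, and everything except the converse of the right equivalence is correct. The gap is the case $v(o)=-\infty$: your regularization $v_\varepsilon=\max\{v,-1/\varepsilon\}$ yields $F_\varepsilon$ with $F_\varepsilon(o)=1$ and $\int_{\Delta^n}|F_\varepsilon|^2e^{-2v_\varepsilon}\leq Ce^{2/\varepsilon}$, but since $e^{-2v_\varepsilon}\leq e^{-2v}$ this gives no control whatsoever on $\int|F_\varepsilon|^2e^{-2v}$. The strong openness property compares $\mathcal{I}(v)$ with $\mathcal{I}((1+\varepsilon)v)$, not with the truncations $\mathcal{I}(v_\varepsilon)$, so it does not supply the missing uniform bound needed to extract a limit in $\mathcal{H}_{2v}(\Delta^n)$; as written, the fix does not close the gap.

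A repair that avoids evaluating $v$ at $o$ altogether: take a smooth cutoff $\chi$ with $\chi\equiv 1$ near $o$ and $\mathrm{supp}\,\chi\subset\Delta^n_{r_0}$, and solve $\bar\partial u=\bar\partial\chi$ on $\Delta^n$ with the plurisubharmonic weight $2v+2n\log|z|$ via H\"ormander's $L^2$ estimates. Since $\bar\partial\chi$ is supported in an annulus where $|z|^{-2n}$ is bounded and $\int_{\Delta^n_{r_0}}e^{-2v}<\infty$, the data has finite weighted norm; the solution then satisfies $\int|u|^2e^{-2v}|z|^{-2n}<\infty$, which forces $u(o)=0$ because $u$ is holomorphic near $o$ and $e^{-2v}$ is bounded below there. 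Hence $f=\chi-u$ is holomorphic on $\Delta^n$ with $f(o)=1$ and $\int_{\Delta^n}|f|^2e^{-2v}<\infty$, so $K_{\Delta^n,2v}(o)>0$. (The paper itself only asserts this direction as ``easy to see'' and does not address the point.)
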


Let $p:\Delta^{n}\times\Delta^{m}\to\Delta^{m}$ be the projection
satisfying $p(z_{1},\cdots,z_{n},w_{1},\cdots,w_{m})=(w_{1},\cdots,w_{m})$,
where $(z_{1},\cdots,z_{n})$ and $(w_{1},\cdots,w_{m})$ are coordinates on
$\mathbb{C}^{n}$ and $\mathbb{C}^{m}$.

Let $u$ be a plurisubharmonic function on $\Delta^{n}\times\Delta^{m}$.
Let $K_{2cu}$ be the fiberwise Bergman kernel on $\Delta^{n}\times\Delta^{m}$
such that
$K_{2cu}|_{p^{-1}(w)}$ is the Bergman kernel associated with the Hilbert space
$\mathcal{H}_{2cu|_{p^{-1}(w)}}(p^{-1}(w))$ (see \cite{bern_bergman}).

Berndtsson's important result on log-subharmonicity of the Bergman kernels \cite{bern_bergman}
tells us that $\log K_{2cu}$ is plurisubharmonic.
Combining with Lemma \ref{lem:bergman_zero},
one can obtain
\begin{Lemma}
\label{lem:lct_slicing}
For any $a>0$, $\{w|c_{(o,w)'}(u|_{p^{-1}(w)})\leq a\}$ is a complete pluripolar set on $\Delta^{m}$,
which implies that $c_{(o,w)'}(u|_{p^{-1}(w)})$ are the same (denoted by $C$) for almost every $w$ in the sense of Lebesgue measure on $\mathbb{C}^{m}$,
$(o,w)'\in p^{-1}(w)$.
Moveover $C=\sup_{w\in\Delta^{m}}\{c_{(o,w)'}(u|_{p^{-1}(w)})\}$.
\end{Lemma}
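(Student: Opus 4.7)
The plan is to exploit Berndtsson's log-plurisubharmonicity of fibrewise Bergman kernels, together with Lemma~\ref{lem:bergman_zero}, to realise each lower level set $E_{a}:=\{w\in\Delta^{m}\,|\,c_{(o,w)'}(u|_{p^{-1}(w)})\leq a\}$ as the polar locus of a plurisubharmonic function on $\Delta^{m}$. Once this identification is in place, complete pluripolarity is immediate, and the almost-everywhere constancy will follow from the standard fact that a \emph{proper} complete pluripolar subset of $\mathbb{C}^{m}$ has Lebesgue measure zero.

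Concretely, I would fix $a>0$ and consider the plurisubharmonic function $au$ on $\Delta^{n}\times\Delta^{m}$, with associated fibrewise Bergman kernel $K_{2au}$. Using the scaling identity $c_{z}(av)=c_{z}(v)/a$ valid for any plurisubharmonic $v$, Lemma~\ref{lem:bergman_zero} applied on each fibre $p^{-1}(w)$ gives
$$K_{2au}\bigl((o,w),(o,w)\bigr)=0\ \Longleftrightarrow\ c_{(o,w)'}(au|_{p^{-1}(w)})\leq 1\ \Longleftrightarrow\ c_{(o,w)'}(u|_{p^{-1}(w)})\leq a,$$
so $E_{a}$ coincides with the zero set of the diagonal fibrewise Bergman kernel along the holomorphic section $z=o$. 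By Berndtsson's theorem \cite{bern_bergman}, $\log K_{2au}$ is plurisubharmonic on the total space, and therefore its restriction $\varphi_{a}(w):=\log K_{2au}((o,w),(o,w))$ to the section $\{z=o\}$ is either plurisubharmonic on $\Delta^{m}$ or identically $-\infty$. In both cases, $E_{a}=\{\varphi_{a}=-\infty\}$ is complete pluripolar.

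For the second half, set $C:=\sup_{w\in\Delta^{m}}c_{(o,w)'}(u|_{p^{-1}(w)})\in(0,+\infty]$. For any rational $a<C$ (or any positive integer $a$ if $C=+\infty$), there exists $w_{a}$ with $c_{(o,w_{a})'}(u|_{p^{-1}(w_{a})})>a$, so $\varphi_{a}(w_{a})>-\infty$; hence $\varphi_{a}$ is a genuine plurisubharmonic function and $E_{a}$ is a \emph{proper} pluripolar subset of $\Delta^{m}$, of Lebesgue measure zero. Choosing a countable increasing sequence $a_{j}\uparrow C$, the union $\bigcup_{j}E_{a_{j}}=\{w:c_{(o,w)'}(u|_{p^{-1}(w)})<C\}$ is Lebesgue null, and its complement $\{w:c_{(o,w)'}(u|_{p^{-1}(w)})=C\}$ therefore has full Lebesgue measure; the characterisation $C=\sup_{w}c_{(o,w)'}(u|_{p^{-1}(w)})$ is built into the definition.

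I do not anticipate a serious obstacle here, since plurisubharmonicity of $au$ is automatic and the sup characterisation of $C$ guarantees $\varphi_{a}\not\equiv-\infty$ whenever $a<C$; the only delicate point is the bookkeeping when $C=+\infty$, which is handled by letting $a_{j}$ run through the positive integers.
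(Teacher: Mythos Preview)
Your proposal is correct and follows essentially the same approach as the paper: both identify $E_{a}$ with $\{\log K_{2au}(o,\cdot)=-\infty\}$ via Lemma~\ref{lem:bergman_zero} and Berndtsson's log-plurisubharmonicity, yielding complete pluripolarity. The only cosmetic difference is in the ``moreover'' part: the paper first names the a.e.\ value $C$ and then argues by contradiction (again invoking the openness conjecture) that $C=\sup_w c_{(o,w)'}$, whereas you set $C:=\sup$ from the outset and use the existence of $w_a\notin E_a$ for $a<C$ to make each $E_a$ proper --- this reordering is slightly more direct but not a genuinely different route.
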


\begin{proof}
By Lemma \ref{lem:bergman_zero} $(v=au|_{p^{-1}(w)})$, it follows that
$$\{w|c_{(o,w)'}(u|_{p^{-1}(w)})\leq a\}=\{w|\log K_{2au}(o,w)=-\infty\},$$
which implies
$\{w|c_{(o,w)'}(u|_{p^{-1}(w)})\leq a\}$ is a complete pluripolar set on $\Delta^{m}$.

Note that the Lebesgue measure of pluripolar set is $0$ or $\pi^{m}$.
It follows that $c_{(o,w)'}(u|_{p^{-1}(w)})$ are the same (denoted by $C$) for almost every $w$.

We prove "moreover" part by contradiction:
if not,
then it follows that there exists $w$ satisfying $c_{(o,w)'}(u|_{p^{-1}(w)})>C$,
which implies
$$\log K_{2Cu}(o,w)>-\infty.$$

As $\log K_{2Cu}(o,w)$ is plurisubharmonic,
then it follows that
there exists a neighborhood $U$ of $w$
such that
$$\log K_{2Cu}(o,\cdot)>-\infty$$
for almost all point in $U$.

Using Berndtsson's solution of the openness conjecture,
one can obtain
$$c_{(o,\cdot)'}(u|_{p^{-1}(\cdot)}(o,\cdot))>C$$
holds for almost all point in $U$,
which contradicts "$c_{(o,w)'}(u|_{p^{-1}(w)})=C$ for almost all $w\in\Delta^{m}$".
\end{proof}

By the strong openness property,
one can also obtain an analogue of the restriction formula for multiplier ideal,
$$c_{o}^{I}(u\circ f)\leq\sup\{c^{\tilde{I}}_{f(o)}(u)|f^{*}\tilde{I}=I\ \& \ \tilde{I}\subseteq\mathcal{O}_{f(o)}\},$$
which
is equivalent to
the comparison property on the multiplier ideals: $\mathcal{I}(u\circ f)\subseteq f^{*}\mathcal{I}(u)$
(see \cite{DEL00}, see also (14.3) in \cite{demailly2010}),
where $f$ is a holomorphic map.\\

In \cite{DEL00} (see also Theorem (14.2) in \cite{demailly2010}),
the following subadditivity theorem on jumping numbers has been presented

\begin{Theorem}
\label{thm:add_multi}Subadditivity Theorem
$\\$
$(a)$
$\pi_{i}:=\Omega_{1}\times\Omega_{2}\to\Omega_{i}$
$i=1,2$ the projections,
and let $u_{i}$ be a plurisubharmonic function on $\Omega_{i}$.
Then
$$\mathcal{I}(u_{1}\circ \pi_{1}+u_{2}\circ \pi_{2})=\pi_{1}^{*}(\mathcal{I}(u_{1}))\cdot\pi_{2}^{*}(\mathcal{I}(u_{2})).$$

$\\$
$(b)$ Let $\Omega$ be a domain and let $u$ and $v$ be two plurisubharmonic functions on $\Omega$.
Then
$$\mathcal{I}(u+v)\subseteq\mathcal{I}(u)\cdot\mathcal{I}(v).$$
\end{Theorem}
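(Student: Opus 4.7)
The plan is to establish part (a) directly and then derive part (b) from part (a) via a standard diagonal trick combined with the Ohsawa-Takegoshi extension theorem recalled in Theorem \ref{t:ot_plane} and Remark \ref{rem:ot_plane}.

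For part (a), the inclusion $\pi_{1}^{*}\mathcal{I}(u_{1})\cdot\pi_{2}^{*}\mathcal{I}(u_{2}) \subseteq \mathcal{I}(u_{1}\circ\pi_{1}+u_{2}\circ\pi_{2})$ is straightforward: if $f\in\mathcal{I}(u_{1})_{o_{1}}$ and $g\in\mathcal{I}(u_{2})_{o_{2}}$, on a small bidisk $U_{1}\times U_{2}$ Fubini gives
$$\int_{U_{1}\times U_{2}}|f(z_{1})g(z_{2})|^{2}e^{-2u_{1}(z_{1})-2u_{2}(z_{2})}d\lambda_{2n}=\Big(\int_{U_{1}}|f|^{2}e^{-2u_{1}}\Big)\Big(\int_{U_{2}}|g|^{2}e^{-2u_{2}}\Big)<\infty,$$
and the product ideal on the left is generated, as an $\mathcal{O}_{(o_{1},o_{2})}$-ideal, by such pure tensors $\pi_{1}^{*}f\cdot\pi_{2}^{*}g$.

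For the reverse inclusion, I would take $F \in \mathcal{I}(u_{1}\circ\pi_{1}+u_{2}\circ\pi_{2})_{(o_{1},o_{2})}$, holomorphic and square-integrable against $e^{-2u_{1}-2u_{2}}$ on a bounded pseudoconvex neighborhood $U_{1}\times U_{2}\ni(o_{1},o_{2})$. The weighted Bergman space $\mathcal{A}^{2}(U_{1}\times U_{2},u_{1}+u_{2})$ factorises as the completed Hilbert tensor product of the one-factor weighted Bergman spaces on $U_{1}$ and $U_{2}$, since products of orthonormal bases remain orthonormal by Fubini. Expanding $F$ in such a tensor basis yields $F=\sum_{j,k}c_{jk}\phi_{j}(z_{1})\psi_{k}(z_{2})$ converging in $L^{2}$ and hence locally uniformly on compacta of $U_{1}\times U_{2}$, with $\phi_{j}\in\mathcal{A}^{2}(U_{1},u_{1})$ and $\psi_{k}\in\mathcal{A}^{2}(U_{2},u_{2})$, so that in particular $\phi_{j}\in\mathcal{I}(u_{1})_{o_{1}}$ and $\psi_{k}\in\mathcal{I}(u_{2})_{o_{2}}$. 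The key step is then to descend this $L^{2}$ expansion to germ-level membership in the coherent ideal $\mathcal{J}:=\pi_{1}^{*}\mathcal{I}(u_{1})\cdot\pi_{2}^{*}\mathcal{I}(u_{2})$: truncate the expansion to a partial sum $S_{N}$ (which manifestly lies in $\mathcal{J}$), correct the tail $F-S_{N}$ by solving a $\bar\partial$-equation via H\"ormander's $L^{2}$-estimates together with the strong openness property to absorb an $\varepsilon$-loss in the weight, and finally invoke the Artin-Rees/Krull closedness of the finitely generated ideal $\mathcal{J}$ in the $\mathfrak{m}_{(o_{1},o_{2})}$-adic topology. This transfer from an $L^{2}$-convergent Hilbert expansion to germ-level ideal membership is the main technical obstacle.

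Once part (a) is in hand, part (b) follows by a diagonal trick. Given $f\in\mathcal{I}(u+v)_{o}$ on $\Omega$, set $\tilde{w}(z_{1},z_{2}):=u(z_{1})+v(z_{2})$, a plurisubharmonic function on $\Omega\times\Omega$ whose restriction to the diagonal $\Delta=\{z_{1}=z_{2}\}$ equals $u+v$ under the natural identification $\Delta\simeq\Omega$. The local Ohsawa-Takegoshi extension theorem (Remark \ref{rem:ot_plane}) produces a germ $F$ of holomorphic function at $(o,o)\in\Omega\times\Omega$ with $F|_{\Delta}=f$ and $|F|^{2}e^{-2\tilde{w}}$ locally integrable, i.e.\ $F\in\mathcal{I}(u\circ\pi_{1}+v\circ\pi_{2})_{(o,o)}$. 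Applying part (a), one writes $F=\sum_{i=1}^{r}g_{i}(z_{1})h_{i}(z_{2})$ at the germ level with $g_{i}\in\mathcal{I}(u)_{o}$ and $h_{i}\in\mathcal{I}(v)_{o}$, and restricting to the diagonal yields $f(z)=F(z,z)=\sum_{i}g_{i}(z)h_{i}(z)\in(\mathcal{I}(u)\cdot\mathcal{I}(v))_{o}$, which is the desired inclusion.
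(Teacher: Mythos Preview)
The paper does not give its own proof of Theorem~\ref{thm:add_multi}; it is quoted as a known preparatory result from \cite{DEL00} (see also Theorem~(14.2) in \cite{demailly2010}). Your sketch is essentially the original Demailly--Ein--Lazarsfeld argument: Fubini for the easy inclusion in (a), the Hilbert tensor product structure of weighted Bergman spaces for the reverse inclusion, and the diagonal embedding combined with Ohsawa--Takegoshi for (b).

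One comment on your handling of the hard direction in (a): the appeal to strong openness is unnecessary (and anachronistic---the result predates it). The standard and cleaner route from the $L^{2}$ expansion to germ-level membership is to invoke Nadel's coherence argument directly: on a small enough pseudoconvex neighbourhood $U_{i}$, any Hilbert basis of $\mathcal{H}^{2}(U_{i},u_{i})$ generates $\mathcal{I}(u_{i})$ as an $\mathcal{O}$-module (H\"ormander estimates plus Krull/Artin--Rees). Since the products $\phi_{j}\psi_{k}$ form a Hilbert basis of $\mathcal{H}^{2}(U_{1}\times U_{2},u_{1}\circ\pi_{1}+u_{2}\circ\pi_{2})$ by Fubini, the same coherence argument applied to the sum weight shows they generate $\mathcal{I}(u_{1}\circ\pi_{1}+u_{2}\circ\pi_{2})$, while they manifestly generate $\pi_{1}^{*}\mathcal{I}(u_{1})\cdot\pi_{2}^{*}\mathcal{I}(u_{2})$; equality follows. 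Your tail-correcting $\bar\partial$ scheme could be made to work but is more laborious than needed.
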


By the strong openness property,
it follows that Theorem \ref{thm:add_multi} is equivalent to the following result:

\begin{Theorem}
\label{thm:add_jump}
$\\$
$(a)$ Let $\Omega_{1}\ni o_{1}$ and $\Omega_{2}\ni o_{2}$ be two domains,
$\pi_{i}:=\Omega_{1}\times\Omega_{2}\to\Omega_{i}$
$i=1,2$ the projections,
and let $u_{i}$ be a plurisubharmonic function on $\Omega_{i}$.
Then
\begin{equation}
\label{equ:add_jump_equ}
c_{o_{1}\times o_{2}}^{\tilde{I}}(u_{1}\circ \pi_{1}+u_{2}\circ \pi_{2})=\sup\{\min\{c_{o_{1}}^{J_{1}}(u_{1}),c_{o_{2}}^{J_{2}}(u_{2})\}|J_{1}\cdot J_{2}\supseteq \tilde{I}\},
\end{equation}
where $\tilde{I}$ is a coherent ideal in $\mathcal{O}_{o_{1}\times o_{2}}$, $J_{1}$ and $J_{2}$ are coherent ideals in $\mathcal{O}_{o_{1}}$ and $\mathcal{O}_{o_{2}}$ respectively.
$\\$
$(b)$ Let $\Omega$ be a domain, let $u$ and $v$ be two plurisubharmonic functions on $\Omega\ni o$.
Then
\begin{equation}
\label{equ:add_jump_inequ}
c_{o}^{I}(u+v)\leq\sup\{\min\{c_{o}^{I_{1}}(u),c_{o}^{I_{2}}(v)\}|I_{1}\cdot I_{2}\supseteq I\}
\end{equation}
where $I_{1}$ and $I_{2}$ are coherent ideals in $\mathcal{O}_{o}$.
\end{Theorem}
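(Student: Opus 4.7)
The plan is to deduce Theorem \ref{thm:add_jump} from Theorem \ref{thm:add_multi} by translating every statement ``$\mathcal{I}(pu)_o\supseteq I$'' into one about ``$c_o^I(u)>p$'' via the strong openness property $\mathcal{I}(u)=\bigcup_{\varepsilon>0}\mathcal{I}((1+\varepsilon)u)$ of \cite{GZopen-c}. Concretely, strong openness yields the bi-implication
\begin{equation*}
\mathcal{I}(pu)_o\supseteq I\ \Longleftrightarrow\ c_o^I(u)>p,
\end{equation*}
which is the sole ingredient needed to convert the sheaf-theoretic equality/inclusion of Theorem \ref{thm:add_multi} into the supremum-of-jumping-numbers statements of Theorem \ref{thm:add_jump}.

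For part (a), I would prove both inequalities by testing against auxiliary levels $p$. For the ``$\geq$'' direction, given any coherent $J_1\subseteq\mathcal{O}_{o_1}$ and $J_2\subseteq\mathcal{O}_{o_2}$ with $\pi_1^*J_1\cdot\pi_2^*J_2\supseteq\tilde I$, and any $p<\min\{c_{o_1}^{J_1}(u_1),c_{o_2}^{J_2}(u_2)\}$, strong openness gives $\mathcal{I}(pu_i)_{o_i}\supseteq J_i$ for $i=1,2$; multiplying and applying the equality of Theorem \ref{thm:add_multi}(a) yields
\begin{equation*}
\mathcal{I}(p(u_1\circ\pi_1+u_2\circ\pi_2))_{(o_1,o_2)}=\pi_1^*\mathcal{I}(pu_1)\cdot\pi_2^*\mathcal{I}(pu_2)\supseteq\tilde I,
\end{equation*}
so $c_{o_1\times o_2}^{\tilde I}(u_1\circ\pi_1+u_2\circ\pi_2)>p$; letting $p$ approach the supremum and then taking the supremum over admissible $(J_1,J_2)$ closes the inequality. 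For the reverse ``$\leq$'' direction, given $p<c_{o_1\times o_2}^{\tilde I}(u_1\circ\pi_1+u_2\circ\pi_2)$, strong openness delivers $\mathcal{I}(p(u_1\circ\pi_1+u_2\circ\pi_2))_{(o_1,o_2)}\supseteq\tilde I$; the equality ``$=$'' of Theorem \ref{thm:add_multi}(a) then makes the canonical choice $J_i:=\mathcal{I}(pu_i)_{o_i}$ admissible, and strong openness once more gives $c_{o_i}^{J_i}(u_i)>p$ for $i=1,2$.

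For part (b) the argument is one-sided and identical in spirit: pick $p<c_o^I(u+v)$, apply strong openness to get $\mathcal{I}(p(u+v))_o\supseteq I$, invoke the inclusion $\mathcal{I}(p(u+v))\subseteq\mathcal{I}(pu)\cdot\mathcal{I}(pv)$ of Theorem \ref{thm:add_multi}(b) to obtain $\mathcal{I}(pu)_o\cdot\mathcal{I}(pv)_o\supseteq I$, and set $I_1:=\mathcal{I}(pu)_o$, $I_2:=\mathcal{I}(pv)_o$; strong openness gives $c_o^{I_1}(u)>p$ and $c_o^{I_2}(v)>p$, so the right-hand supremum exceeds $p$, and letting $p\to c_o^I(u+v)^-$ yields the desired inequality.

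The argument contains no substantive analytic obstacle, since the hard content (the $L^2$-extension input for the nontrivial inclusion in Theorem \ref{thm:add_multi}, and the resolution of the strong openness conjecture in \cite{GZopen-c}) is already in hand. The only mild subtlety is the bookkeeping between strict and non-strict inequalities, which I would handle by always working at an auxiliary level $p$ strictly less than the jumping number in question and invoking the ``$>p$'' form of strong openness consistently throughout, so that suprema may be approached from below without worrying about whether they are attained.
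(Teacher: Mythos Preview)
Your proposal is correct and follows exactly the route the paper indicates: the paper does not give a detailed proof of Theorem~\ref{thm:add_jump} but simply asserts that, by the strong openness property, it is equivalent to Theorem~\ref{thm:add_multi}. You have faithfully unpacked that assertion, supplying the level-$p$ argument and the canonical choice $J_i=\mathcal{I}(pu_i)_{o_i}$ (resp.\ $I_1=\mathcal{I}(pu)_o$, $I_2=\mathcal{I}(pv)_o$) that turns the sheaf equality/inclusion into the jumping-number identities; the handling of strict inequalities via strong openness is exactly what is needed.
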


Let $\Omega_{i}\subset\mathbb{C}^{n}$ and containing the origin $o\in\mathbb{C}^{n}$ for any $i\in\{1,2\}$.
Let $\Delta$ be the diagonal of $\mathbb{C}^{n}\times\mathbb{C}^{n}$.
It is well-known that

\begin{Remark}\label{lem:linear}
Let $A_{1}$ and $A_{2}$ be two varieties on $\Omega_{1}$ and $\Omega_{2}$ respectively through $o$.
Assume that $A_{1}$ and $A_{2}$ are both regular at $o$.
Then $\dim (T_{A_{1},o}\cap T_{A_{2},o})=\dim(T_{A_{1}\times A_{2},(o,o)}\cap T_{\Delta,(o,o)})$.
\end{Remark}

\subsection{Applications of the slicing result on complex singularity exponent}

Let $(z_{1},\cdots,z_{k})$ be the coordinates of
$\mathbb{B}^{k-l}\times\mathbb{B}^{l}\subseteq\mathbb{C}^{k}$,
and let
$p:\mathbb{B}^{k-l}\times\mathbb{B}^{l}\to\mathbb{B}^{k-l}$.
Let $H_{1}:=\{z_{k-l+1}=\cdots=z_{k}=0\}$.

We present a corollary of Lemma \ref{lem:lct_slicing} as follows

\begin{Corollary}
\label{coro:lct_slice_open}
Let $u$ be a plurisubharmonic function on $\mathbb{B}^{k-l}\times\mathbb{B}^{l}$.
Assume that $c_{z}(u)\leq 1$ for any $z\in H_{1}$ and $c_{o}(u)=1$,
where $o$ is the origin in $\mathbb{B}^{k-l}\times\mathbb{B}^{l}$.
Then for almost every
$a=(a_{1},\cdots,a_{k-l})\in\mathbb{B}^{k-l}$
with respect to the Lebesgue measure on $\mathbb{C}^{k-l}$, $c_{z_{a}'}(u|_{L_{a}})=1$ holds,
where $L_{a}=\{z_{1}=a_{1},\cdots,z_{k-l}=a_{k-l}\}$,
and $z_{a}'\in L_{a}\cap H_{1}$ emphasizes that $c_{z_{a}'}(u|_{L_{a}})$ is computed on the submanifold $L_{a}$.
\end{Corollary}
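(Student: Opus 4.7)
The plan is to derive the corollary as a direct application of Lemma \ref{lem:lct_slicing}, combined with Proposition \ref{prop:DK2000} for an upper bound and a Fubini argument for the matching lower bound.

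First I would reconcile the geometric setups by identifying $\mathbb{B}^{k-l}\times\mathbb{B}^l$ with $\mathbb{B}^l\times\mathbb{B}^{k-l}$ via the coordinate swap, so that the projection onto the $\mathbb{B}^{k-l}$ factor matches the projection $p$ in Lemma \ref{lem:lct_slicing} (with $n\leftrightarrow l$ and $m\leftrightarrow k-l$). The fibers of this projection are precisely the slices $L_a$, and the distinguished points $(o,w)'$ of Lemma \ref{lem:lct_slicing} correspond to $z_a'=(a_1,\dots,a_{k-l},0,\dots,0)$. Lemma \ref{lem:lct_slicing} then produces a constant $C\in\mathbb{R}^+\cup\{+\infty\}$ such that $c_{z_a'}(u|_{L_a})=C$ for Lebesgue-almost every $a\in\mathbb{B}^{k-l}$, and moreover $C=\sup_{a\in\mathbb{B}^{k-l}}c_{z_a'}(u|_{L_a})$. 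It remains to pin down $C=1$.

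For the upper bound $C\leq 1$: every point $z_a'$ lies in $H_1$, so by hypothesis $c_{z_a'}(u)\leq 1$; applying Proposition \ref{prop:DK2000} to the submanifold $L_a$ through $z_a'$ gives $c_{z_a'}(u|_{L_a})\leq c_{z_a'}(u)\leq 1$, and taking the supremum over $a$ yields $C\leq 1$. For the lower bound $C\geq 1$: since $c_o(u)=1$, for each $\varepsilon\in(0,1)$ there is a neighborhood $U\ni o$ with $\int_U e^{-2(1-\varepsilon)u}\,d\lambda<+\infty$. Applying Fubini along the fibers of the projection, for Lebesgue-almost every $a$ sufficiently close to $0$ the function $e^{-2(1-\varepsilon)u|_{L_a}}$ is integrable on $L_a\cap U$, which is a neighborhood of $z_a'$ in $L_a$; hence $c_{z_a'}(u|_{L_a})\geq 1-\varepsilon$ on a positive-measure set of $a$'s, so $C\geq 1-\varepsilon$. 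Sending $\varepsilon\to 0^+$ gives $C\geq 1$.

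The argument presents no serious technical obstacle beyond bookkeeping. The one step deserving care is matching the geometric framework of Lemma \ref{lem:lct_slicing} to the present one via the coordinate swap, and verifying that for $a$ close to $0$ the slice intersection $L_a\cap U$ contains an honest open neighborhood of $z_a'$ inside $L_a$; once these are in place, the full-measure set provided by Fubini necessarily intersects the full-measure constancy set provided by Lemma \ref{lem:lct_slicing}, and the identity $C=1$ propagates to almost every fiber as required.
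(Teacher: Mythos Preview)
Your proposal is correct and follows essentially the same approach as the paper: both invoke Lemma \ref{lem:lct_slicing} to obtain the almost-everywhere constant $C$, bound $C\leq 1$ via Proposition \ref{prop:DK2000} together with the hypothesis $c_{z}(u)\leq 1$ on $H_1$, and obtain $C\geq 1$ from the integrability of $e^{-2pu}$ near $o$ for $p<1$ via a Fubini/slicing argument. The paper phrases the lower bound step as a one-line contradiction, while you spell out the Fubini step and the coordinate matching more explicitly, but the substance is identical.
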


\begin{proof}
By Lemma \ref{lem:lct_slicing} and $c_{o}(u)=1$,
it follows that $c\geq 1$ (consider the integrability of $e^{-2pu}$ near $o$, where $p<1$ near $1$, and by contradiction).

By $c_{z}(u)\leq 1$ for any $z\in H_{1}$ and Proposition \ref{prop:DK2000},
it follows that $c_{z_{a}'}(u|_{L_{a}})\leq c_{z_{a}}(u)\leq1$ for any $z_{a}'\in L_{a}\cap H_{1}$.
Combining $c\geq 1$, we obtain Corollary \ref{coro:lct_slice_open}.
\end{proof}

The following remark is the singular version of Corollary \ref{coro:lct_slice_open}:

\begin{Remark}
\label{rem:lct_slice_sing}
Let $A_{3}$ be a reduced irreducible analytic subvariety on $\mathbb{B}^{k-l}\times\mathbb{B}^{l}$ through $o$
satisfying $\dim_{o}A_{3}=k-l$
such that

$(1)$ for any $a=(a_{1},\cdots,a_{k-l})\in\mathbb{B}^{k-l}$,
$A_{3}\cap L_{a}\neq\emptyset$, where $L_{a}=\{z_{1}=a_{1},\cdots,z_{k-l}=a_{k-l}\}$;

$(2)$ there exists analytic subset $A_{4}\subseteq\mathbb{B}^{k-l}$
such that any $z\in (A_{3}\cap p^{-1}(\mathbb{B}^{k-l}\setminus A_{4}))$
is the regular point in $A_{3}$ and the noncritical point of $p|_{A_{3,reg}}$.

Let $u$ be a plurisubharmonic function on $\mathbb{B}^{k-l}\times\mathbb{B}^{l}$.
Assume that $c_{z}(u)\leq 1$ for any $z\in A_{3}$ and $c_{o}(u)=1$.
Then for almost every
$a=(a_{1},\cdots,a_{k-l})\in\mathbb{B}^{k-l}$
with respect to the Lebesgue measure on $\mathbb{C}^{k-l}$,
there exists $z_{a}\in A_{3}\cap L_{a}$
such that
equality $c_{z_{a}'}(u|_{L_{a}})=1$.
\end{Remark}

\begin{proof}
By Lemma \ref{lem:lct_slicing},
it follows that there exists $c\in(0,\infty]$.
such that $c_{z'}(u|_{L_{p(z)}})=c$
for almost every $z\in (A_{3}\cap p^{-1}(\mathbb{B}^{k-l}\setminus A_{4}))$
with respect to the Lebesgue measure
in $(A_{3}\cap p^{-1}(\mathbb{B}^{k-l}\setminus A_{4}))$.
By $c_{o}(u)=1$,
it follows that $c\geq 1$
(consider the integrability of $e^{-2pu}$ near $o$, where $p<1$ near $1$,
and by contradiction).

By Proposition \ref{prop:DK2000},
it follows that $1\leq c\leq c_{z}(u|_{L_{p(z)}})\leq c_{z}(u)\leq 1$ holds,
for almost every $z\in (A_{3}\cap p^{-1}(\mathbb{B}^{k-l}\setminus A_{4}))$.
Then one can find $z_{3}\in (A_{3}\cap p^{-1}(\mathbb{B}^{k-l}\setminus A_{4}))$
such that $c_{z_{3}}(u)=c_{z_{3}'}(u|_{L_{p(z_{3})}})=1$.

By Corollary \ref{coro:lct_slice_open} $(o\sim z_{3})$,
it follows that
for almost every
$a=(a_{1},\cdots,a_{k-l})\in(\mathbb{B}^{k-l}\setminus A_{4})$
with respect to the Lebesgue measure on $\mathbb{C}^{k-l}$,
there exists $z_{a}\in A_{3}\cap L_{a}$
such that
equality $c_{z_{a}'}(u|_{L_{a}})=1$.
As the Lebesgues measure of $A_{4}$ on $\mathbb{C}^{k-l}$ is zero,
then we obtain Remark \ref{rem:lct_slice_sing}.
\end{proof}

\section{Proof of Theorem \ref{t:jump_equality} (main theorem)}\label{sec:proof_main}

It suffices to consider the case $c_{o'}^{I}(u|_{H})=1$ (consider $c_{o'}^{I}(u|_{H})u$ instead of $u$).

By $\sup\{c_{o}^{\tilde{I}}(u)|\tilde{I}\subseteq\mathcal{O}_{o}\ \& \ \tilde{I}|_{H}=I\}=1$ ($\Rightarrow$ $|\tilde{I}|^{2}e^{-2u}$ is not locally integrable near $o$)
and Proposition \ref{Pro:GZ1005},
it follows that
$$\sup\{c_{o}^{\tilde{I}}(\tilde{u})|\tilde{I}\subseteq\mathcal{O}_{o}\ \& \ \tilde{I}|_{H}=I\}\leq 1,$$
where $\tilde{u}:=\max\{u,\frac{1}{l-1}\log\sum_{j}|g_{j}|\}$.
Since $\tilde{u}\geq u$,
which implies
$$\sup\{c_{o}^{\tilde{I}}(\tilde{u})|\tilde{I}\subseteq\mathcal{O}_{o}\ \& \ \tilde{I}|_{H}=I\}\geq \sup\{c_{o}^{\tilde{I}}(u)|\tilde{I}\subseteq\mathcal{O}_{o}\ \& \ \tilde{I}|_{H}=I\}=1,$$
then it follows that $\sup\{c_{o}^{\tilde{I}}(\tilde{u})|\tilde{I}\subseteq\mathcal{O}_{o}\ \& \ \tilde{I}|_{H}=I\}=1$.

By the restriction formula on jumping numbers,
it follows that
$\sup\{c_{o}^{\tilde{I}}(\tilde{u})|\tilde{I}\subseteq\mathcal{O}_{o}\ \& \ \tilde{I}|_{H}=I\}\geq c_{o'}^{I}(\tilde{u}|_{H})\geq c_{o'}^{I}(u|_{H})=1$
($\Leftarrow \tilde{u}|_{H}\geq u|_{H}$),
which implies
$c_{o'}(\tilde{u}|_{H})=\sup\{c_{o}^{\tilde{I}}(\tilde{u})|\tilde{I}\subseteq\mathcal{O}_{o}\ \& \ \tilde{I}|_{H}=I\}=1.$

Let $Y:=Supp\{z|c_{z}(\tilde{u})\leq 1\}=Supp(\mathcal{O}/\mathcal{I}(c_{o'}^{I}(u|_{H})u))$.
We prove Theorem \ref{t:jump_equality} by contradiction.
If not, then $dim Y<n-k$.
By Lemma \ref{l:intersec_fibre},
there exists a $k+1$ dimensional plane $H_{1}\supset H$
such that $H_{1}\cap Y= H\cap Y$ (without loss of generality, one can retract the $\Delta^{n}$).
By changing of the coordinates,
we set $H_{1}:=\{z_{k+2}=\cdots=z_{n}\}.$

By the restriction formula on jumping numbers,
it follows that
$$\sup\{c_{o}^{\tilde{I}}(\tilde{u})|\tilde{I}\subseteq\mathcal{O}_{o}\ \& \ \tilde{I}|_{H}=I\}\geq \sup\{c_{o''}^{\tilde{I}}(\tilde{u}|_{H_{1}})|\tilde{I}\subseteq\mathcal{O}_{o''}\ \& \ \tilde{I}|_{H}=I\}
\geq c_{o'}^{I}(\tilde{u}|_{H}),$$
where $o''\in H_{1}$ is the origin, which emphasizes that $c_{o''}^{\tilde{I}}(\tilde{u}|_{H_{1}})$ is computed on the submanifold $H_{1}$.
As
$$\sup\{c_{o}^{\tilde{I}}(\tilde{u})|\tilde{I}\subseteq\mathcal{O}_{o}\ \& \ \tilde{I}|_{H}=I\}=c_{o'}^{I}(\tilde{u}|_{H})=1,$$
then it follows that
$$\sup\{c_{o''}^{\tilde{I}}(\tilde{u})|\tilde{I}\subseteq\mathcal{O}_{o''}\ \& \ \tilde{I}|_{H}=I\}=c_{o'}(\tilde{u}|_{H})=1.$$

As $H_{1}\cap Y= H\cap Y$,
then $\{z|\mathcal{I}(\tilde{u}|_{H_{1}})_{z''}\neq \mathcal{O}_{z''},z\in H_{1}\}\subseteq H\cap Y$.
By Corollary \ref{Coro:jump=1} on $H_{1}$ ($u\sim\tilde{u}|_{H_{1}}$, $J\sim I$),
it follows that there exists $N>0$ (independent of $\tilde{I}\subseteq\mathcal{O}_{o''}$)
such that
\begin{equation}
\label{equ:20150320a}
\sup_{\tilde{I}|_{H}=I}\{c_{o''}^{\tilde{I}}(\max\{\tilde{u}|_{H_{1}},N\log|z_{k+1}|\})\}\leq1.
\end{equation}

By Corollary \ref{coro:jump>1},
it follows that
$$\sup_{\tilde{I}|_{H}=I}\{c_{o''}^{\tilde{I}}(\max\{\tilde{u}|_{H_{1}},N\log|z_{k+1}|\})\}>1,$$
which contradicts inequality \ref{equ:20150320a}.

Then the present theorem has been proved.

\section{Proofs of the applications of Theorem \ref{t:jump_equality}}\label{sec:proof_applications}

In this section,
we present the proofs of applications of Theorem \ref{t:jump_equality}.

\subsection{Proof of Theorem \ref{thm:jump_equality_dim_sing}}

As $c_{o'}^{I}(u|_{H})=c$ (equality \ref{equ:jump_equ}), $c_{z'}(u|_{H})\leq c$ for any $z'\in (A\cap H)$ (Proposition \ref{prop:DK2000}) and $\dim((A\cap H,o)\setminus(V(I),o))=\dim_{o}(A\cap H)$,
by using Remark \ref{rem:jump_supp} ($o\sim o'$, $u\sim u|_{H}$, $A\sim A\cap H$),
it follows that there exists $z_{0}\in((A\cap H,o')\setminus (V(I),o'))$
such that $c_{z_{0}'}(u|_{H})=c_{o'}^{I}(u|_{H})$ and $\dim_{z_{0}}(A\cap H)=\dim_{o}(A\cap H)$.

Note that $\dim_{o}A\geq\dim_{z_{0}}A$,
then it suffices to consider:
$$"c_{o'}(u|_{H})=c_{o}(u)"\Rightarrow"\dim_{o}A=n-k+\dim_{o}(A\cap H)"$$
($\dim_{o}A\geq\dim_{z_{0}}A=n-k+\dim_{z_{0}}(A\cap H)=n-k+\dim_{o}(A\cap H)$, $z_{0}\sim o$ in the first $"="$).

By Remark \ref{rem:GZ1005} $(u\sim c_{o}(u)u, J=\mathcal{I}(c_{o}(u)u)_{o})$,
it follows that
\begin{equation}
\label{equ:151013b}
c_{z}(\tilde{u})\leq 1
\end{equation}
for any $z\in (A,o)$
and $c_{o}(\tilde{u})=1$ $(\Leftarrow c_{o}(c_{o}(u)u)=1)$.
By Proposition \ref{prop:DK2000},
it follows that
\begin{equation}
\label{equ:151010a}
c_{z'}(\tilde{u}|_{H})\leq 1
\end{equation}
for any $z\in (A\cap H,o)$.

Using Proposition \ref{prop:DK2000} and inequality \ref{equ:151010a},
one can obtain that $c_{o'}(\tilde{u}|_{H})\leq c_{o}(\tilde{u})\leq 1$.
Combining with
$1=c_{o}(u)/c_{o}(u)=c_{o'}(u|_{H})/c_{o}(u)=c_{o'}(c_{o}(u)u|_{H})\leq c_{o'}(\tilde{u}|_{H})$ $(\Leftarrow c_{o}(u)u\leq\tilde{u})$,
one can obtain that
\begin{equation}
\label{equ:151013a}
c_{o'}(\tilde{u}|_{H})=c_{o}(\tilde{u})=1.
\end{equation}

Let $l=k-\dim_{o}(A\cap H)$.
Let $A_{3}$ be a irreducible component of
$A\cap H$ on $\mathbb{B}^{k-l}\times\mathbb{B}^{l}\subset H$ through $o$
satisfying $\dim_{o}A_{3}=k-l$.

By the parametrization of $(A_{3},o)$ in $H$ (see "Local parametrization theorem" (4.19) in \cite{demailly-book}),
it follows that
one can find local coordinates $(z_{1},\cdots,z_{n})$ of
a neighborhood $U=\mathbb{B}^{k-l}\times\mathbb{B}^{l}\times\mathbb{B}^{n-k}$ of $o$ satisfying $H=\{z_{k+1}=\cdots=z_{n}=0\}$
and $\dim(A\cap U)=\dim_{o}A$
such that

$(1)$ $A_{3}\cap((\mathbb{B}^{k-l}\times\mathbb{B}^{l})\cap H)$ is reduced and irreducible;

$(2)$ for any $a=(a_{1},\cdots,a_{k-l})\in\mathbb{B}^{k-l}$,
$A_{3}\cap L_{a}\neq\emptyset$, where $L_{a}=\{z_{1}=a_{1},\cdots,z_{k-l}=a_{k-l}\}$;

$(3)$ there exists analytic subset $A_{4}\subseteq\mathbb{B}^{k-l}$
such that any $z\in (A_{3}\cap p^{-1}(\mathbb{B}^{k-l}\setminus A_{4}))$
is the regular point in $A_{3}$ and the noncritical point of $p|_{A_{3,reg}}$,
where $p:(z_{1},\cdots,z_{k})=(z_{1},\cdots,z_{k-l})$.

By $(2)$, $(3)$, $c_{o}(\tilde{u})=c_{o'}(\tilde{u}|_{H})=1$ (inequality \ref{equ:151013a}),
$c_{z}(\tilde{u})\leq1$ for any $z\in A_{3}$ (inequality \ref{equ:151013b}),
and Remark \ref{rem:lct_slice_sing},
it follows that for almost every
$a=(a_{1},\cdots,a_{k-l})\in\mathbb{B}^{k-l}$
with respect to the Lebesgue measure on $\mathbb{C}^{k-l}$,
there exists $z_{a}\in A_{3}\cap L_{a}$
such that
equality $c_{z_{a}'}(\tilde{u}|_{L_{a}})=1$ (the set of $a$ denoted by $A_{ae}$),
where $z_{a}'$ emphasizes that $c_{z_{a}'}(\tilde{u}|_{L_{a}})$ is computed on the submanifold $L_{a}$.

Let $\tilde{L}_{a}=\{z_{1}=a_{1},\cdots,z_{k-l}=a_{k-l}\}$.
By inequality \ref{equ:151013b} and Proposition \ref{prop:DK2000},
it follows that for any $a\in A_{ae}$,
$1=c_{z_{a}'}(\tilde{u}|_{L_{a}})\leq c_{z_{a}''}(\tilde{u}|_{\tilde{L}_{a}})\leq c_{z_{a}}(\tilde{u})=1$,
which implies $c_{z_{a}'}(\tilde{u}|_{L_{a}})=c_{z_{a}''}(\tilde{u}|_{\tilde{L}_{a}})=1$,
where
$z_{a}''$ emphasizes that $c_{z_{a}''}(\tilde{u}|_{\tilde{L}_{a}})$ is computed on the submanifold $\tilde{L}_{a}$.

Using
Corollary \ref{coro:lct_restrict_equ}
($\mathbb{C}^{n}\sim\tilde{L}_{a}$,
$H\sim H\cap\tilde{L}_{a}=L_{a}$,
$o\sim z_{a}$, $u\sim\tilde{u}|_{\tilde{L}_{a}}$),
one can obtain that
for any $a\in A_{ae}$,
$\max_{z_{a}\in p^{-1}(a)}\dim_{z_{a}}\{z''|c_{z''}(\tilde{u}|_{\tilde{L}_{a}})\leq 1\}\geq n-l-(k-l)=n-k$.
By the definition of $\tilde{u}$,
it follows that
$((A\cap U)\cap \tilde{L}_{a})\supseteq\{z''|c_{z''}(\tilde{u}|_{\tilde{L}_{a}})\leq 1\}$,
which implies
$\dim((A\cap U)\cap \tilde{L}_{a})\geq\max_{z_{a}\in p^{-1}(a)}\dim_{z_{a}}\{z''|c_{z''}(\tilde{u}|_{\tilde{L}_{a}})\leq 1\}$.
Then we obtain that
the $2(n-k)$-dimensional Hausdorff measure
of $\dim((A\cap U)\cap \tilde{L}_{a})$ is not zero for any $a\in A_{ae}$.

Note that the $2(k-l)$-dimensional Hausdorff measure of $A_{ae}$ is not zero,
then it follows that the $2(n-k)+2(k-l)=2(n-l)$-dimensional Hausdorff measure
of $A$ near $o$ is not zero (see Theorem 3.2.22 in \cite{Fed69}),
which implies that
$\dim_{o}A=\dim(A\cap U)\geq n-l$.
Note that $l=k-\dim_{o}(A\cap H)$ implies $\dim_{o}A\leq n-k+(k-l)=n-l$,
then Theorem \ref{thm:jump_equality_dim_sing} has been proved.

\subsection{Proof of Theorem \ref{thm:lct_slice_graph}}
By Corollary \ref{coro:jump_supp},
it follows that $(2)\Leftrightarrow (3)$.

In order to prove Theorem \ref{thm:lct_slice_graph},
by Theorem \ref{thm:jump_equality_dim_sing},
it suffices to prove the following statement $((1)\Rightarrow(2))$.\\

Assume that $(A\cap H,o)$ is regular,
and
$k-\dim_{o}A\cap H=n-\dim_{o}A$.
If $c_{o}(u)=c_{o}(u|_{H})$,
then there exist coordinates $(w_{1},\cdots,w_{k},z_{k+1},\cdots,z_{n})$ near $o$ and $l\in\{1,\cdots,k\}$,
such that $(w_{1}=\cdots=w_{l}=0,o)=(A,o).$\\

Let $J_{0}=\mathcal{I}(c_{o}(u)u)_{o}$.
By Remark \ref{rem:GZ1005} $(u\sim c_{o}(u)u)$,
it follows that there exists $p_{0}>0$ large enough,
such that $\tilde{u}:=\max\{c_{o}(u)u,p_{0}\log|J_{0}|\}$
satisfies:
$(1)$ $c_{o}(\tilde{u})=1$ $(\Leftarrow c_{o}(c_{o}(u)u)=1)$;
$(2)$ $(\{z|c_{z}(\tilde{u})\leq 1\},o)=(A,o)$.

By $\tilde{u}|_{H}\geq c_{o}(u)u|_{H}= c_{o'}(u|_{H})u|_{H}$,
it follows that
$c_{o'}(\tilde{u}|_{H})\geq c_{o'}(c_{o}(u)u|_{H})=c_{o'}(c_{o'}(u|_{H})u|_{H})=1$.
Combining with the fact that
$c_{o'}(\tilde{u}|_{H})\leq c_{o}(\tilde{u})=1$,
we obtain that
\begin{equation}
\label{equ:151009a}
c_{o'}(\tilde{u}|_{H})=1.
\end{equation}

Note that $c_{z'}(\tilde{u}|_{H})\leq c_{z}(\tilde{u})$ for any $z\in A\cap H$,
then by $(2)$ $(\Rightarrow c_{z}(\tilde{u})\leq 1)$ for any $z\in A\cap H$,
it follows that
$(\{z|c_{z'}(\tilde{u}|_{H})\leq 1\},o)\supseteq(A\cap H,o).$
Combining with the definition of
$\tilde{u}$ $(\Rightarrow (\{z|c_{z'}(\tilde{u}|_{H})<+\infty\},o)\subseteq(V(J_{o})\cap H,o)=(A\cap H,o))$,
we obtain
\begin{equation}
\label{equ:151009b}
(V(\mathcal{I}(\tilde{u}|_{H})),o)=(\{z|c_{z'}(\tilde{u}|_{H})\leq 1\},o)=(A\cap H,o).
\end{equation}

In the following part of the present section,
we consider $\tilde{u}$ instead of $u$.

By equality \ref{equ:151009b}, it follows that $(V(\mathcal{I}(\tilde{u}|_{H})),o)(=(A\cap H,o))$ is regular.
Combining with equality \ref{equ:151009a} and Corollary \ref{coro:jump_supp} $(u\sim \tilde{u}|_{H})$,
it follows that there exist $l\in\{1,\cdots,k\}$ and holomorphic functions $f_{1},\cdots,f_{l}$ near $o'\in H$ such that \\
$(a)$ $df_{1}|_{o'},\cdots,df_{l}|_{o'}$ are linear independent;\\
$(b)$ $(\{f_{1}=\cdots=f_{l}=0\},o)=(A\cap H,o)$ holds;\\
$(c)$ $|f_{j}|^{2}e^{-2\tilde{u}|_{H}}$ are all locally integrable near $o'$ for $j\in\{1,\cdots,l\}$.

By Remark \ref{rem:ot_plane} and $(c)$,
it follows that there exist holomorphic functions $F_{1},\cdots,F_{l}$ near $o\in \mathbb{C}^{n}$ such that
 and $|F_{j}|^{2}e^{-2\tilde{u}}$ are integrable near $o$ for any $j\in\{1,\cdots,l\}$,
which implies that $\{F_{1}=\cdots=F_{l}=0\}\supseteq A$.
Combining $F_{j}=f_{j}$ and $(a)$, we obtain that $dF_{1}|_{o},\cdots,dF_{l}|_{o},dz_{k+1}|_{o},\cdots,dz_{n}|_{o}$ are linear independent.

Note that $\{F_{1}=\cdots=F_{l}=0\}$ is regular near $o$ and $n-\dim_{o}A=k-\dim_{o}A\cap H=l$,
then it follows that $\{F_{1}=\cdots=F_{l}=0\}=A$ near $o$.
Choosing $w_{j}=F_{j}$ for any $j\in\{1,\cdots,l\}$,
one can find holomorphic functions $w_{l+1},\cdots,w_{k}$ near $o$ such that
$dw_{1}|_{o},\cdots,dw_{k}|_{o},dz_{k+1}|_{o},\cdots,dz_{n}|_{o}$ are linear independent.
Then Theorem \ref{thm:lct_slice_graph} has been proved.

\subsection{Proof of Remark \ref{prop:add_dim_nonregular}}\label{sec:add_nonregular}

Let $A_{1}=V(\mathcal{I}(cu))$ and $A_{2}=V(\mathcal{I}(cv))$,
and $A=\{(z,w)|c_{(z,w)}(\max\{\pi_{1}^{*}(u),\pi_{2}^{*}(v)\})\leq c\}$.
By Proposition \ref{thm:subadd_cse_general},
it follows that
\begin{equation*}
\begin{split}
c_{(o,o)}(\max\{\pi_{1}^{*}(u),\pi_{2}^{*}(v)\})
&=c_{o}(u)+c_{o}(v)=c
\\&=c_{o}(\max\{u,v\})
=c_{(z,w)}(\max\{\pi_{1}^{*}(u),\pi_{2}^{*}(v)\}|_{\Delta}).
\end{split}
\end{equation*}
Using Theorem \ref{thm:jump_equality_dim_sing} $(u\sim\max\{\pi_{1}^{*}(u),\pi_{2}^{*}(v)\}$,
$H\sim\Delta$, $o\sim (o,o)$, $k\sim n$, $n\sim 2n)$,
we obtain $\dim_{(o,o)}A=\dim_{(o,o)}(A\cap\Delta)+n$.
By Proposition \ref{thm:add_prod_cse},
it follows that
$A=\{(z,w)|c_{z}(u)+c_{w}(v)\leq c\}\subseteq\{(z,w)|\max\{c_{z}(u),c_{w}(v)\}\leq c\}=A_{1}\times A_{2}$,
which implies  $\dim_{o}A_{1}+\dim_{o}A_{2}=\dim_{(o,o)}(A_{1}\times A_{2})\geq \dim_{(o,o)}A$.
Note that $B=\{z|c_{z}(u)+c_{z}(v)\leq c\}$ is biholomophic to $A\cap\Delta$,
then it follows that $\dim_{o}A_{1}+\dim_{o}A_{2}\geq \dim_{(o,o)}A=\dim_{(o,o)}(A\cap\Delta)+n=n+\dim_{o}B$.
Remark \ref{prop:add_dim_nonregular} has thus been proved.

\subsection{Proof of Proposition \ref{prop:lct_add_graph}}
Following the symbols in subsection \ref{sec:add_nonregular},
by Theorem \ref{thm:lct_slice_graph} $(n\sim 2n$, $k\sim n$, $u\sim\max\{\pi_{1}^{*}u,\pi_{2}^{*}v\}$, $o\sim (o,o)\in\mathbb{C}^{n}\times\mathbb{C}^{n}$,
$H\sim\Delta$ the diagonal of $\mathbb{C}^{n}\times\mathbb{C}^{n})$,
it follows that $A$ is regular at $((o,o))$ satisfying $\dim_{(o,o)}A=\dim_{(o,o)}(A\cap\Delta)+n$.
As $A_{1}\cap A_{2}=B$,
it follows that
$(A_{1}\times A_{2})\cap\Delta=A\cap\Delta$,
which implies
\begin{equation}
\label{equ:151024a}
\dim_{(o,o)}A
=\dim_{(o,o)}(A\cap\Delta)+n
=\dim_{(o,o)}((A_{1}\times A_{2})\cap\Delta)+n.
\end{equation}

Note that $A_{1}\times A_{2}\supseteq A$ and equality \ref{equ:151024a} holds,
then it follows that
$\dim_{(o,o)}(A_{1}\times A_{2})\geq\dim_{(o,o)}A=\dim_{(o,o)}((A_{1}\times A_{2})\cap\Delta)+n$.
As $\Delta$ is regular,
then it is clear that
$\dim_{(o,o)}(A_{1}\times A_{2})\leq\dim_{(o,o)}((A_{1}\times A_{2})\cap\Delta)+n$,
which implies
$\dim_{(o,o)}(A_{1}\times A_{2})=\dim_{(o,o)}((A_{1}\times A_{2})\cap\Delta)+n=\dim_{(o,o)}A$.
Note that $(A,(o,o))$ is regular and $A_{1}\times A_{2}$ is irreducible at $(o,o)$ ($A_{1}$ and $A_{2}$ are both irreducible at $o$),
then we obtain $A=A_{1}\times A_{2}$,
which implies $A_{1}$ and $A_{2}$ are both regular.

By the transversality between $A_{1}\times A_{2}=A$ and $\Delta$ at $(o,o)$ and Remark \ref{lem:linear},
it follows that $2n=\dim(T_{A_{1}\times A_{2},(o,o)}+T_{\Delta,(o,o)})=\dim T_{A_{1}\times A_{2},(o,o)}+\dim T_{\Delta,(o,o)}-\dim(T_{A_{1}\times A_{2},(o,o)}\cap T_{\Delta,(o,o)})
=(\dim T_{A_{1},o}+\dim T_{A_{2},o})+n-\dim(T_{A_{1},o}\cap T_{A_{2},o})=\dim(T_{A_{1},o}+ T_{A_{2},o})+n$.
It is clear that $\dim(T_{A_{1},o}+T_{A_{2},o})=n$,
then we prove Proposition \ref{prop:lct_add_graph}.

\section{Proofs of two sharp relations on jumping numbers}\label{sec:proof_relations}

In the present section, we prove Theorem \ref{t:GZ_jump_sharp} and Theorem \ref{thm:sharp_jump_inequ}.

\subsection{Proof of Theorem \ref{t:GZ_jump_sharp}}
$\\$

By $c_{o}^{I}(\tilde{u}_{l})=1$ in Remark \ref{rem:GZ1005},
it follows that
$c_{o}^{I}(\max\{c^{I}_{o}(u)u,\frac{1}{\frac{c^{IJ}_{o}(u)}{c^{I}_{o}(u)}-1}\log|J|\})=1$.

By the monotonicity of complex singularity exponents
($u\leq v\Rightarrow c_{o}(u)\leq c_{o}(v)$),
it follows that
$c_{o}^{I}(\frac{1}{\frac{c^{IJ}_{o}(u)}{c^{I}_{o}(u)}-1}\log|J|\})\leq 1$,
i.e.,
\begin{equation}
\label{equ:20150125e}
\frac{c^{I}_{o}(u)}{c^{IJ}_{o}(u)-c^{I}_{o}(u)}\geq c_{o}^{I}(\log|J|).
\end{equation}
Then Theorem \ref{t:GZ_jump_sharp} has thus been proved.\\

For the sake of completeness,
we give a proof of the following equivalence
$$IJ\subseteq\mathcal{I}(c^{I}_{o}(u)u)_{o}\Leftrightarrow c^{IJ}_{o}(u)>c^{I}_{o}(u).$$

Firstly, we  prove "$\Rightarrow$".
Since $IJ\subseteq\mathcal{I}(c^{I}_{o}(u)u)_{o}$ implies that
$|IJ|^{2}e^{-2c^{I}_{o}(u)u}$ is locally integrable near $o$,
then it follows that $c^{IJ}_{o}(u)>c^{I}_{o}(u)$ by the strong openness property.

Secondly, we prove "$\Leftarrow$".
Since $IJ\not\subseteq\mathcal{I}(c^{I}_{o}(u)u)_{o}$ implies that
$|IJ|^{2}e^{-2c^{I}_{o}(u)u}$ is not locally integrable near $o$,
then it follows that $c^{IJ}_{o}(u)\leq c^{I}_{o}(u)$ by the definition of $c^{IJ}_{o}(u)$.

\subsection{Proof of Corollary \ref{coro:GZ_jump_sharp}}
$\\$

When $I=\mathcal{O}_{o}$, inequality \ref{equ:20150125e} degenerates to
\begin{equation}
\label{equ:20150125d}
\frac{c_{o}(u)}{c^{J}_{o}(u)-c_{o}(u)}\geq c_{o}(\log|J|),
\end{equation}
i.e.,
\begin{equation}
\label{equ:20150125f}
c^{J}_{o}(u)\leq\frac{c_{o}(u)}{c_{o}(\log|J|)}+c_{o}(u).
\end{equation}
Then Corollary \ref{coro:GZ_jump_sharp} follows for $J\subseteq\mathcal{I}(c_{o}(u)u)_{o}$.

If $J$ does not satisfy $J\subseteq\mathcal{I}(c_{o}(u)u)_{o}$,
then $|J|^{2}e^{-2c_{o}(u)u}$ is not integrable near $o$,
which implies
$c^{J}_{o}(u)\leq c_{o}(u)$.
Note that $|J|^{2}$ is locally bounded near $o$.
Then it follows that
$|J|^{2}e^{-2cu}$ is locally integrable near $o$ for any $c<c_{o}(u)$,
which implies
$c^{J}_{o}(u)\geq c_{o}(u)$.
Then it is clear that $c^{J}_{o}(u)=c_{o}(u)$.

Then  Corollary \ref{coro:GZ_jump_sharp} has been proved.

\subsection{Proof of Theorem \ref{thm:sharp_jump_inequ}}
$\\$

By Corollary \ref{coro:jump_compare20150424},
it follows that for any $\varepsilon>0$,
there exists a coherent ideal $\tilde{I}|_{H}=I$
such that
\begin{equation}
\label{equ:20150531a}
c_{o}^{\tilde{I}}(\max(\varphi,\frac{N-1}{b}\log|h|))\geq  \frac{bN}{N-1}-\varepsilon.
\end{equation}

Assume that $b_{1}>0.$
For any $\tilde{I}|_{H}=I$, it follows that
\begin{equation}
\label{equ:20150515b}
\frac{1}{c_{o}^{\tilde{I}h}(\varphi)-c_{o}^{\tilde{I}}(\varphi)}\leq \frac{1}{b_{1}}.
\end{equation}
By Remark \ref{rem:GZ1005} ($l\sim\frac{c_{o}^{\tilde{I}h}(\varphi)}{c_{o}^{\tilde{I}}(\varphi)}$,
$I\sim\tilde{I}$, $J\sim h$),
it follows that
\begin{equation}
\label{equ:20150513a}
c_{o}^{\tilde{I}}(\max\{c_{o}^{\tilde{I}}(\varphi)\varphi,\frac{1}{\frac{c_{o}^{\tilde{I}h}(\varphi)}{c_{o}^{\tilde{I}}(\varphi)}-1}\log|h|\})=1.
\end{equation}

By Corollary \ref{coro:jump_compare20150424}
($k\sim n-1$,
$z_{k+1}\sim h$, $b\sim c_{o'}^{I}(\varphi|_{H})$,
$\frac{N-1}{b}\sim\frac{1}{b_{1}}$),

which deduces
\begin{equation}
\label{equ:20150513c}
\begin{split}
&\sup_{\tilde{I}|_{H}=I}c_{o}^{\tilde{I}}(\max\{\varphi,\frac{1}{b_{1}}\log|h|\})
\geq
\frac{c_{o'}^{I}(\varphi|_{H})(c_{o'}^{I}(\varphi|_{H})+b_{1})}
{c_{o'}^{I}(\varphi|_{H})}
=
c_{o'}^{I}(\varphi|_{H})+b_{1}
\end{split}
\end{equation}

By equality \ref{equ:20150513a},
it follows that
$$c_{o'}^{I}(\varphi|_{H})+b_{1}\leq b_{0}$$
holds for any $\varepsilon>0$
(if not, then $">"$ holds.
Combining with inequality \ref{equ:20150515b},
it follows that there exists $\tilde{I}$ such that
\begin{equation}
\label{equ:20150515a}
\begin{split}
&c_{o}^{\tilde{I}}(\max\{\varphi,(\frac{1}{c_{o}^{\tilde{I}h}(\varphi)-c_{o}^{\tilde{I}}(\varphi)})\log|h|\})
\geq c_{o}^{\tilde{I}}(\max\{\varphi,\frac{1}{b_{1}}\log|h|\})
>b_{0}\geq c_{o}^{\tilde{I}}(\varphi)
\end{split}
\end{equation}
that contradicts equality \ref{equ:20150513a}).
Then Theorem \ref{thm:sharp_jump_inequ} is proved for the case $b_{1}>0$.

When $b_{1}\leq 0$,
noting that
$c_{o'}^{I}(\varphi|_{H})\leq b_{0}$ (Restriction formula (jumping number)),
we prove Theorem \ref{thm:sharp_jump_inequ}.

\subsection{Proof of Corollary \ref{coro:sharp_lct}}
$\\$

By Remark \ref{rem:slicing},
it suffices to consider $k=n-1$.

Consider the holomorphic map $p:\mathbb{C}^{n}\to \mathbb{C}^{n}$ with coordinates
$(z_{1},\cdots,z_{n})$ and $(w_{1},\cdots,w_{n})$ respectively
satisfying $p(z_{1},\cdots,z_{n-1},z_{n})=(z_{1},\cdots,z_{n-1},z_{n-1}z_{n})$.
Then it follows that
$$\int_{\Delta^{n}_{r}}e^{-2l\varphi}=\int_{\Delta_{r}(a)}\int_{\Delta^{n}_{r}\cap \{z_{n}=a\}}|z_{n-1}|^{2}e^{-2l\varphi\circ p},$$
which implies
$c^{w_{n-1}}_{o'}(\varphi|_{\frac{w_{n}}{w_{n-1}}=a})
=c^{z_{n-1}}_{(0,\cdots,0,a)'}((\varphi\circ p)|_{z_{n}=a})\geq c_{n}$
for a.e. $a\in\Delta_{r}$ ($r>0$ small enough, using lower semicontinuity of complex singularity exponent),
where $(0,\cdots,0,a)'$ emphasizes that $c^{z_{n-1}}_{(0,\cdots,0,a)'}((\varphi\circ p)|_{\{z_{n}=a\}})$ is computed on the submanifold $\{z_{n}=a\}$.

Using inequality \ref{equ:sharp_lct} ($n\sim n-1$, $h(=z_{n})\sim w_{n-1}$, $\varphi\sim\varphi|_{\frac{w_{n}}{w_{n-1}}=a}$,
$c^{h}_{o}(\varphi)\sim c^{w_{n-1}}_{o'}(\varphi|_{\frac{w_{n}}{w_{n-1}}=a})$),
we obtain
Corollary \ref{coro:sharp_lct}.

\begin{Remark}
\label{rem:slicing}
For any $k\in\{1,\cdots,n-2\}$, there exist dim $k$ and $k+2$ planes $H_{k}$ and $H_{k+2}$ planes through $o$
satisfying $H_{k}\subset H_{k+2}\subset \mathbb{C}^{n}$,
such that
$c_{o'}(\varphi|_{H_{k}})=c_{k}$ and $c_{o''}(\varphi|_{H_{k+2}})=c_{k+2}$,
where $o''$ emphasizes that $c_{o''}(\varphi|_{H_{k+2}})$ is computed on the submanifold $H_{k+2}$.
\end{Remark}

By Remark \ref{rem:lct_slice},
it suffices to consider the following remark (proof see Section \ref{sec:appendix II}).

\begin{Remark}
\label{rem:Grassman}
Let $G_{1}$ and $G_{2}$ be two subsets of $G(k_{1},n)$ and $G(k_{2},n)$ whose complements are of $U(n)$-invariant measure $0$ respectively
($k_{1}<k_{2}$).
Then there exists $V_{1}\in G_{1}$ and $V_{2}\in G_{2}$ satisfying $V_{1}\subset V_{2}$.
\end{Remark}

\subsection{Proof of Remark \ref{rem:Grassman}}\label{sec:appendix II}
$\\$

It is well-known that
a Zariski open set of $G(k_{1},n)$ ($G(k_{2},n)$) could be presented as $(M(k_{1},n-k_{1})\,(\delta_{j,k_{1}+1-l})_{1\leq j,l\leq k_{1}})$
($(M(k_{2},n-k_{2})\,(\delta_{j,k_{2}+1-l})_{1\leq j,l\leq k_{2}})$) with respect to the same coordinate
$(z_{1},\cdots,z_{n})$.

We consider two cases: (1) $n\geq k_{1}+k_{2}$; (2) $n<k_{1}+k_{2}$.

$\\$
\textbf{proof of Case (1)}

Let $D\in M(k_{2}-k_{1},n-(k_{1}+k_{2}))$.
Consider a family of mappings $p_{D}$ from $M(k_{1},n-k_{1})$ to $(M(k_{2},n-k_{2}),(\delta_{j,k_{2}+1-l})_{1\leq j\leq k_{2},1\leq l\leq k_{2}-k_{1}})$:

\begin{equation}
\label{equ:linearA}
p_{D}(A\,B):=\left(
          \begin{array}{cc}
            A-B\times (\delta_{j,k_{2}-k_{1}+1-l})_{1\leq j,l\leq k_{2}-k_{1}}\times(D\,B^{t}) & 0 \\
            (D\,B^{t}) & (\delta_{j,k_{2}-k_{1}+1-l})_{1\leq j,l\leq k_{2}-k_{1}}  \\
          \end{array}
        \right)
\end{equation}
i.e.,
\begin{equation}
\label{equ:linearB}
\left(
          \begin{array}{cc}
           (\delta_{j,l})_{1\leq j,l\leq k_{1}}  & -B\times (\delta_{j,k_{2}-k_{1}+1-l})_{1\leq j,l\leq k_{2}-k_{1}}  \\
            0 & (\delta_{j,l})_{1\leq j,l\leq k_{2}-k_{1}}  \\
          \end{array}
        \right)
\left(
          \begin{array}{cc}
            A & B \\
            (D,B^{t}) & (\delta_{j,k_{2}-k_{1}+1-l})_{1\leq j,l\leq k_{2}-k_{1}}  \\
          \end{array}
        \right),
\end{equation}
for any $A\in M(k_{1},n-k_{2})$ and $B\in M(k_{1},k_{2}-k_{1})$.

Note that

(1a) for any $D$, holomorphic map $p_{D}$ is injective;

(1b) $\sqcup_{D}p_{D}(M(k_{1},n-k_{1}))=(M(k_{2},n-k_{2})\,(\delta_{j,k_{2}+1-l})_{1\leq j\leq k_{2},1\leq l\leq k_{2}-k_{1}})$,
which implies that for a.e. $D\in M(k_{2}-k_{1},n-(k_{1}+k_{2}))$ and a.e. $M\in M(k_{1},n-k_{1})$
$(P_{D}(M)\,(\delta_{j,k_{1}+1-l})_{1\leq j,l\leq k_{1}})\in G_{2}$.

(1c) the vector space generated by the row vector of $(p_{D}(A\,B)\,(\delta_{j,k_{1}+1-l})_{1\leq j, l\leq k_{1}})$
contains the vector space generated by the row vector of
$(A\,B\,(\delta_{j,k_{1}+1-l})_{1\leq j\leq k_{2},1\leq l\leq k_{1}})$ (by equality \ref{equ:linearB}).

By (1b) and (1c), it follows that case (1) has been proved.

$\\$
\textbf{Proof of Case (2)}

Let $D\in M(k_{2}-k_{1},k_{1}+k_{2}-n)$.
Consider a family of mappings $p_{D}$ from subset $G_{D}:=\{A\,(D\,B^{t})^{t}|A\in M(k_{1},n-k_{2}),B\in M(n-k_{2},k_{2}-k_{1})\}$
of $M(k_{2},n-k_{2})$ to $(M(k_{1},n-k_{1})\,(\delta_{j,k_{2}+1-l})_{1\leq j\leq k_{2},1\leq l\leq k_{2}-k_{1}})$:

\begin{equation}
\label{equ:linearC}
p_{D}( A\,(D\,B^{t})^{t}):=\left(
          \begin{array}{cc}
            A-(D\,B^{t})^{t}\times(\delta_{j,k_{2}-k_{1}+1-l})_{1\leq j,l\leq k_{2}-k_{1}}\times B^{t} & 0\\
            B^{t}  & (\delta_{j,k_{2}-k_{1}+1-l})_{1\leq j,l\leq k_{2}-k_{1}}  \\
          \end{array}
        \right)
\end{equation}
i.e.,
\begin{equation}
\label{equ:linearD}
\left(
          \begin{array}{cc}
           (\delta_{j,l})_{1\leq j,l\leq k_{1}}  &  -(D\,B^{t})^{t}\times(\delta_{j,k_{2}-k_{1}+1-l})_{1\leq j,l\leq k_{2}-k_{1}} \\
           0                                     &  (\delta_{j,l})_{k_{1}+1\leq j,l\leq k_{2}}
          \end{array}
        \right)
\left(
          \begin{array}{cc}
            A & (D\,B^{t})^{t} \\
           B^{t}  & (\delta_{j,k_{2}-k_{1}+1-l})_{1\leq j,l\leq k_{2}-k_{1}}  \\
          \end{array}
        \right),
\end{equation}
for any $A\in M(k_{1},n-k_{2})$ and $B\in M(n-k_{2},k_{2}-k_{1})$.

Note that

(2a) for any $D$, holomorphic map $p_{D}$ is surjective and injective;

(2b) $\sqcup_{D}G_{D}=(M(k_{2},n-k_{2})\,(\delta_{j,k_{2}+1-l})_{1\leq j\leq k_{2},1\leq l\leq k_{2}-k_{1}})$,
which implies that for a.e. $D\in M(k_{2}-k_{1},k_{1}+k_{2}-n)$ and a.e. $M\in G_{D}$,
$(P_{D}(M)\,(\delta_{j,n-l})_{1\leq j,l\leq k_{1}})\in G_{1}$.

(2c) the vector space generated by the row vector of $(p_{D}( A\,(D\,B^{t})^{t})\,(\delta_{j,k_{1}+1-l})_{1\leq j\leq k_{2},1\leq l\leq k_{1}})$ contains the vector space generated by the row vector of
$(A\,(D\,B^{t})^{t}\,(\delta_{j,k_{1}+1-l})_{1\leq j, l\leq k_{1}})$ (by equality \ref{equ:linearD}).

By (2b) and (2c), it follows that case (2) has been proved.

\section{Berndtsson's log subharmonicity and integrability}\label{sec:Berndtsson}

In this section,
we present a relationship between
Berndtsson's log subharmonicity and integrability.

We recall a lemma which was used
in \cite{GZopen-a,GZopen-b,GZopen-c} to prove Demailly's strong openness conjecture:

\begin{Lemma}
\label{l:open_b}(see \cite{GZopen-a,GZopen-b})
Let $h_{a}$ be a holomorphic function on unit disc $\Delta\subset\mathbb{C}$
which satisfies
$h_{a}(o)=0$ and $h_{a}(a)=1$ for any $a$,
then we have
$$\int_{\Delta_{r}}|h_{a}|^{2}d\lambda_{1}>C_{1}|a|^{-2},$$
where $a\in\Delta$ whose norm is smaller than $\frac{1}{6}$,
$C_{1}$ is a positive constant independent of $a$ and $h_{a}$.
\end{Lemma}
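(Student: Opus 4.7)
My plan is to expand $h_a$ in a Taylor series at the origin, use the orthogonality of monomials in $L^2(\Delta_r)$ to rewrite $\|h_a\|_{L^2(\Delta_r)}^2$ as a weighted $\ell^2$-norm of its Taylor coefficients, and then extract the lower bound from the Cauchy--Schwarz inequality applied to the interpolation condition $h_a(a)=1$.

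Concretely, since $h_a\in\mathcal{O}(\Delta)$ vanishes at the origin, I would write $h_a(z)=\sum_{n\geq 1}c_n z^n$. The monomials are pairwise orthogonal on $\Delta_r$ with $\int_{\Delta_r}|z|^{2n}\,d\lambda_1=\pi r^{2n+2}/(n+1)$, so
\begin{equation*}
\int_{\Delta_r}|h_a|^2\,d\lambda_1=\sum_{n\geq 1}|c_n|^2\,\frac{\pi r^{2n+2}}{n+1}.
\end{equation*}
Pairing the identity $1=\sum_{n\geq 1}c_n a^n$ against this weighted structure via Cauchy--Schwarz, with the weights $\pi r^{2n+2}/(n+1)$ and their reciprocals, yields
\begin{equation*}
1\leq\Big(\sum_{n\geq 1}|c_n|^2\,\tfrac{\pi r^{2n+2}}{n+1}\Big)\Big(\sum_{n\geq 1}\tfrac{n+1}{\pi r^{2n+2}}\,|a|^{2n}\Big),
\end{equation*}
hence $\int_{\Delta_r}|h_a|^2\geq 1/S(|a|)$ with $S(|a|):=\sum_{n\geq 1}\tfrac{n+1}{\pi r^{2n+2}}|a|^{2n}$.

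Next I would sum the geometric-type series to the closed form $S(|a|)=\frac{|a|^2(2r^2-|a|^2)}{\pi r^2(r^2-|a|^2)^2}$, obtained by differentiating the geometric series $\sum_{n\geq 0}(|a|^2/r^2)^{n+1}$. For $|a|<\tfrac{1}{6}$ (with $r$ fixed strictly larger than $\tfrac{1}{6}$), both $2r^2-|a|^2$ and $(r^2-|a|^2)^{-2}$ are bounded in terms of $r$ alone, so $S(|a|)\leq C^{-1}|a|^2$ and consequently $\int_{\Delta_r}|h_a|^2\geq C|a|^{-2}$, with $C$ independent of $h_a$ and $a$.

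There is no genuine obstacle here. The Cauchy--Schwarz step is sharp, as equality is attained by the Bergman-extremal function $K_0(z,a)/K_0(a,a)$, where $K_0$ is the reproducing kernel of the subspace of $L^2$-holomorphic functions on $\Delta_r$ vanishing at the origin; this built-in sharpness is what produces the correct $|a|^{-2}$ blow-up. The only care required is bookkeeping around the relation between $r$ and the cutoff $\tfrac{1}{6}$, which ensures that $|a|/r$ stays bounded away from $1$.
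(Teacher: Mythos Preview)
Your argument is correct. The paper itself does not supply a proof of this lemma; it merely recalls the statement from \cite{GZopen-a,GZopen-b}, so there is no in-paper proof to compare against. Your approach---expanding $h_a$ in a Taylor series at the origin, using orthogonality of monomials to identify $\int_{\Delta_r}|h_a|^2$ with a weighted $\ell^2$-norm of the coefficients, and then applying Cauchy--Schwarz to the constraint $1=\sum_{n\geq 1}c_n a^n$---is precisely the extremal computation for the Bergman kernel of $\Delta_r$ restricted to the subspace of functions vanishing at the origin, and it cleanly delivers the sharp lower bound $1/S(|a|)$ with the correct $|a|^{-2}$ blow-up. Your closed form for $S(|a|)$ and the bound $S(|a|)\leq C^{-1}|a|^2$ for $|a|<\tfrac16$ are both accurate. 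The only small point worth flagging is that the statement leaves $r$ implicit; your reading that $r$ is fixed with $r>\tfrac16$ (so that $|a|/r$ stays bounded away from $1$ and $C_1$ depends only on $r$) is the right one, and in the application in Section~\ref{sec:Berndtsson} the integral is ultimately over the full polydisc.
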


Let $u$ be a plurisubharmonic function on $\Delta^{n}\times\Delta^{m}$ $(n=k,m=1)$
with coordinates $(z_{1},\cdots,z_{k},w)$,
and $p$ be the projection with $p(z_{1},\cdots,z_{k},w)=w$
and $K_{2u}$ be the fiberwise Bergman kernel as in the above subsection.

\begin{Proposition}
\label{prop:bergman20150130}
If $u>0$,
then $e^{-2u}$ is integrable near the origin $(o,o_{w})\in\mathbb{C}^{k+1}$ if and only if
$K_{2u}^{-1}(o,w)$
is integrable near the origin $o_{w}$ with respect to $w$,
i.e.,
$$\nu( \frac{1}{2}\log K_{2u}(o,\cdot),o_{w})\geq 1.$$
\end{Proposition}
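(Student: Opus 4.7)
The plan is to establish the equivalence via two directions, using the extremal characterization of the fiberwise Bergman kernel together with the Ohsawa--Takegoshi extension theorem.

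For the direction $e^{-2u}\in L^{1}_{\mathrm{loc}}$ near $(o,o_w)$ $\Rightarrow$ $K_{2u}^{-1}(o,\cdot)\in L^{1}$ near $o_w$: By the extremal characterization
$$K_{2u}^{-1}(o,w)=\inf\Bigl\{\int_{\Delta^{n}}|f|^{2}e^{-2u(\cdot,w)}\,d\lambda_{n}:f\in\mathcal{O}(\Delta^{n}),\ f(o)=1\Bigr\},$$
taking $f\equiv1$ gives $K_{2u}^{-1}(o,w)\leq\int_{\Delta^{n}_{r}}e^{-2u(\cdot,w)}\,d\lambda_{n}$. Integrating over $w\in\Delta_{r}$ and applying Fubini yields $\int_{\Delta_{r}}K_{2u}^{-1}(o,w)\,dw\leq\int_{\Delta^{n}_{r}\times\Delta_{r}}e^{-2u}<\infty$.

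For the converse direction: First, using Cauchy's mean-value formula for a normalized $f$ with $f(o)=1$, $1=|f(o)|\leq V_{\rho}^{-1}\int_{B_{\rho}(o)}|f|\,d\lambda$, and Cauchy--Schwarz gives $V_{\rho}^{2}\leq \bigl(\int_{B_{\rho}}|f|^{2}e^{-2u(\cdot,w)}\bigr)\bigl(\int_{B_{\rho}}e^{2u(\cdot,w)}\bigr)$. The upper semicontinuity of $u$ bounds $\int_{B_{\rho}}e^{2u(\cdot,w)}\leq V_{\rho}e^{2(u(0,w)+\varepsilon)}$ for $\rho$ small, producing the pointwise lower estimate $K_{2u}^{-1}(o,w)\geq c\,e^{-2u(0,w)}$. (Under the hypothesis $u>0$ the estimate $\int_{\Delta_{r}}e^{-2u(0,w)}\,dw<\infty$ is even immediate since $e^{-2u(0,w)}\leq 1$.) Then apply Ohsawa--Takegoshi (Theorem \ref{t:ot_plane}) to extend the constant $1$ from the one-dimensional submanifold $H=\{z_{1}=\cdots=z_{k}=0\}$ to a holomorphic $F$ on $\Delta^{n}\times\Delta_{r}$ satisfying
$$\int_{\Delta^{n}\times\Delta_{r}}|F|^{2}e^{-2u}\,d\lambda_{n+1}\leq C\int_{\Delta_{r}}e^{-2u(0,w)}\,dw<\infty.$$
Since $F|_{H}=1$ we have $F(o,o_w)=1$, so by continuity $|F|^{2}\geq 1/2$ on a neighborhood $U$ of $(o,o_w)$; therefore $\int_{U}e^{-2u}\leq 2\int_{U}|F|^{2}e^{-2u}<\infty$, giving the desired integrability.

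For the Lelong-number reformulation: Berndtsson's log-subharmonicity theorem (cited before Lemma \ref{lem:lct_slicing}) asserts that $\log K_{2u}(o,\cdot)$ is plurisubharmonic in $w$, so the integrability of $K_{2u}^{-1}(o,\cdot)=e^{-2\cdot\frac{1}{2}\log K_{2u}(o,\cdot)}$ near $o_w\in\mathbb{C}$ is governed by the Lelong number $\nu(\tfrac{1}{2}\log K_{2u}(o,\cdot),o_w)$ via Skoda's one-dimensional integrability criterion (and Berndtsson's solution of the openness conjecture for the borderline case).

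The main obstacle is the uniform pointwise comparison $K_{2u}^{-1}(o,w)\geq c\,e^{-2u(0,w)}$: the crude Cauchy--Schwarz bound produces a constant $c$ depending on $w$ through the USC approximation radius $\rho(w,\varepsilon)$, and extracting a genuinely uniform estimate (as opposed to one that degenerates along a pluripolar set where $u(0,w)=-\infty$) is the step where care is required; Lemma \ref{l:open_b}, through its quantitative $|a|^{-2}$ lower bound for functions interpolating between $0$ and $1$, is the natural tool to convert the fiberwise extremal description of $K_{2u}^{-1}$ into such a uniform estimate.
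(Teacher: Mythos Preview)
Your forward direction (integrability of $e^{-2u}$ implies integrability of $K_{2u}^{-1}(o,\cdot)$), via the test function $f\equiv 1$ and Fubini, is correct and is exactly how the paper dispatches it.

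The converse has a genuine gap. The pointwise comparison $K_{2u}^{-1}(o,w)\geq c\,e^{-2u(0,w)}$ you aim for is not merely hard to make uniform---it is \emph{false}. Take $k=1$ and $u(z,w)=\tfrac{1}{2}\log(|z|^{2}+|w|^{2})-C$ (the constant adjusts the sign). Then $e^{-2u(0,w)}$ is a constant multiple of $|w|^{-2}$, while testing with $f\equiv 1$ gives
\[
K_{2u}^{-1}(0,w)\le e^{2C}\int_{\Delta}\frac{d\lambda(z)}{|z|^{2}+|w|^{2}}
=\pi e^{2C}\log\frac{1+|w|^{2}}{|w|^{2}}\sim 2\pi e^{2C}\log\tfrac{1}{|w|},
\]
which is far smaller than $|w|^{-2}$ as $w\to 0$. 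Here $K_{2u}^{-1}(o,\cdot)$ \emph{is} integrable near $o_w$, yet $e^{-2u(0,w)}$ is \emph{not}, so your Ohsawa--Takegoshi extension of the constant $1$ from the line $\{z=0\}$ cannot even be launched. (Your parenthetical that under $u>0$ the integral $\int e^{-2u(0,w)}$ is trivially finite should have been a warning: if $u>0$ literally then $e^{-2u}<1$ everywhere and the whole proposition is vacuous. The intended hypothesis is visibly $u\le 0$, as the paper's own last line needs $e^{-2u}\ge 1$.)

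The paper argues the converse by contrapositive and extends in the \emph{orthogonal} direction. Assuming $e^{-2u}$ is not integrable near $(o,o_w)$, for each $a$ near $o_w$ one takes an extremal $f_a$ on the fiber $p^{-1}(a)$ realizing $K_{2u}^{-1}(o,a)$ and extends it by Ohsawa--Takegoshi (from the $k$-dimensional hyperplane $p^{-1}(a)$) to $F_a$ on $\Delta^{k+1}$ with $\int|F_a|^{2}e^{-2u}\le C_{D}\,K_{2u}^{-1}(o,a)$. Because $|F_a|^{2}e^{-2u}$ is globally integrable while $e^{-2u}$ is not locally integrable at $(o,o_w)$, one is forced to have $F_a(o,o_w)=0$; together with $F_a(o,a)=1$ this places $h_a(\cdot):=F_a(o,\cdot)$ precisely in the hypotheses of Lemma~\ref{l:open_b}, giving $\int_{\Delta}|F_a(o,\cdot)|^{2}\ge C_{1}|a|^{-2}$. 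A sub-mean-value inequality in $z$ and the sign hypothesis on $u$ then yield $K_{2u}^{-1}(o,a)\ge C|a|^{-2}$, so $K_{2u}^{-1}(o,\cdot)$ is not integrable near $o_w$. You correctly identified Lemma~\ref{l:open_b} as the crux, but its role is to manufacture the $|a|^{-2}$ blow-up of $K_{2u}^{-1}$ \emph{directly}---bypassing $e^{-2u(0,w)}$ entirely---not to rescue a pointwise comparison that does not hold.
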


\begin{proof}
It is clear that if
$e^{-2u}$ is integrable near origin $(o,o_{w})$,
then $K_{2u}^{-1}(o,w)$ is integrable near $o_{w}$.
Then it suffices to prove "only if" part,
i.e., if $e^{-2u}$ is not integrable near $(o,o_{w})$,
then $K_{2u}^{-1}(o,w)$ is not integrable near $o_{w}$.

We use our idea of movably using Ohsawa-Takegoshi $L^{2}$ extension theorem (\cite{GZopen-a,GZopen-b,GZopen-c})
to prove "only if" part:

As $e^{-2u}$ is not integrable near $o$,
then it follows from Theorem \ref{t:ot_plane} $(H=p^{-1}(a))$ that for any $a\in\Delta$,
there exists holomorphic function $F_{a}$ on $\Delta^{k+1}$
such that

$(1)$ $F_{a}(o,a)=1$;

$(2)$ $\int_{\Delta^{k+1}}|F_{a}|^{-2u}\leq C_{D} K^{-1}_{2u}(o,a)$;

$(3)$ $F_{a}(o,o_{w})=0$.

(Using the definition of $K_{2u}$,
one can choose holomorphic $f_{a}$ on $\Delta^{k}\times\Delta$ satisfying
$f_{a}(o,a)=1$ and
$\int_{p^{-1}(a)}|f_{a}|^{2}e^{-2u}=K^{-1}_{2u}(o,a),$
and $F_{a}$ is the Ohsawa-Takegoshi $L^{2}$ extension of $f_{a}$.)

By Lemma \ref{l:open_b} ($F_{a}(z_{1},\cdots,z_{k},\cdot)=h_{a}(\cdot)$) and
the submean inequality of $|F_{a}|^{2}$,
it follows that
$\int_{\Delta^{k+1}}|F_{a}|^{2}>C_{2}\frac{1}{|a|^{2}},$
where $C_{2}>0$ is independent of $a$.
As $u>0$,
then it follows from $(2)$ that
$K^{-1}_{2u}(o,a)\geq\frac{1}{C_{D}}\int_{\Delta^{k+1}}|F_{a}|^{2}e^{-2u}>\frac{C_{2}}{C_{D}}\frac{1}{|a|^{2}}.$
Then the present Proposition has been done.
\end{proof}

\begin{Remark}
\label{lem:integ_Bergman_lelong}
If $e^{-2u|_{p^{-1}(0)}}$ is integrable near $o$,
then $e^{-2u-2c\log|w|}$ is also integrable near $(o,o_{w})$, where $c\in(0,1)$.
\end{Remark}

\begin{proof}
As $e^{-2u|_{p^{-1}(0)}}$ is integrable near $o$,
it follows that
$\nu(\log K_{2u}(o,\cdot),o_{w})=0.$
Since
$K_{2u+2c\log|\cdot|}(o,\cdot)=|\cdot|^{2c}K_{2u}(o,\cdot),$
then
$\nu(\frac{1}{2}\log K_{2u+2c\log|\cdot|}(o,\cdot),o_{w})=c<1.$
By Proposition \ref{prop:bergman20150130}, the present Remark has thus been done.
\end{proof}

%

\bibliographystyle{references}
\bibliography{xbib}

\end{document}